\newtheorem{thm}{Theorem}[section]
\newtheorem{crl}[thm]{Corollary}
\newtheorem{lmm}[thm]{Lemma}
\newtheorem{prp}[thm]{Proposition}
\theoremstyle{definition}
\newtheorem{dfn}[thm]{Definition}
\newtheorem{nota}[thm]{Notation}
\theoremstyle{remark}
\newtheorem{rem}[thm]{Remark}
\newenvironment{eq-text}
{\begin{equation} \begin{minipage}[t]{0.85\linewidth}}
{\end{minipage} \end{equation} \ignorespacesafterend}
\newcommand{\ti}{\tilde}
\newcommand{\pa}{\partial}
\newcommand{\dd}{{\mathrm d}}
\newcommand{\un}[1]{\underline{#1}}
\newcommand{\ov}[1]{\overline{#1}}
\newcommand{\wt}{\widetilde}
\newcommand{\upa}{\uparrow}
\newcommand{\isom}{\xrightarrow{\smash{\ensuremath{\sim}}}}
\newcommand{\be}{\beta}
\newcommand{\eps}{\varepsilon}
\newcommand{\ph}{\varphi}
\newcommand{\Om}{\Omega}
\newcommand{\om}{\omega}
\newcommand{\Ga}{\Gamma}
\newcommand{\ga}{\gamma}
\newcommand{\De}{\Delta}
\newcommand{\de}{\delta}
\newcommand{\la}{\lambda}
\newcommand{\ze}{\zeta}
\DeclareMathOperator{\Z}{\mathbb{Z}}
\DeclareMathOperator{\R}{\mathbb{R}}
\DeclareMathOperator{\C}{\mathbb{C}}
\newcommand{\sD}{\mathscr D}
\newcommand{\sE}{\mathscr E}
\newcommand{\sL}{\mathscr L}
\newcommand{\sR}{\mathscr R}
\newcommand{\sO}{\mathscr O}
\newcommand{\sT}{\mathscr T}
\newcommand{\cB}{\mathcal{B}}
\newcommand{\cM}{\mathcal{M}}
\newcommand{\cN}{\mathcal{N}}
\newcommand{\cS}{\mathcal{S}}
\newcommand{\dfs}{d.f.s.}
\newcommand{\fp}{\mathfrak{p}}
\newcommand{\fq}{\mathfrak{q}}
\newcommand{\uO}{\un{0}}
\newcommand{\uga}{\un{\ga}}
\newcommand{\Rp}{\R_{\geq0}}
\newcommand{\dist}{\operatorname{dist}}
\newcommand{\Det}{\operatorname{det}}
\newcommand{\dst}{\displaystyle}
\title{
Resurgent functions and
nonlinear systems of
differential and difference equations
}
\author{Shingo \textsc{Kamimoto}
\footnote{
Graduate School of Sciences,
Hiroshima University, 
1-3-1 Kagamiyama, Higashi-Hiroshima,
Hiroshima 739-8526, Japan
} 
}
\begin{document}
%

\maketitle

\thispagestyle{empty}



\begin{abstract}
The principal aim of this article is
to establish an iteration method
on the space of resurgent functions.
We discuss endless continuability of
iterated convolution products of
resurgent functions
and derive their estimates
developing the method in \cite{KS}.
Using the estimates,
we show the resurgence of
formal series solutions of nonlinear
differential and difference equations.
\end{abstract}

\section{Introduction}\label{sec:1}

Resurgent analysis has its origin in the publications
\cite{E} written by J. \'Ecalle.
It provides an effective method
for the study of e.g. holomorphic dynamics,
analytical differential equations,
WKB analysis and
it still fascinates many mathematicians
and theoretical physicists.
In this theory,
the space of resurgent functions plays a central role:
a formal series 
$
\ph(x):=\sum_{j=0}^\infty
\ph_jx^{-j}
\in\C[[x^{-1}]]
$
is resurgent if its formal Borel transform
\begin{equation*}
\cB(\ph):=
\ph_0\de + \hat\ph(\xi),
\qquad
\hat\ph(\xi):= \sum_{j=1}^{\infty} \ph_j \frac{\xi^{j-1}}{(j-1)!}
\end{equation*}
is convergent and
$\hat\ph(\xi)$ is endlessly continuable (cf. \cite{E}).
In this article,
we adopt the definition of endless continuability
in \cite{CNP}.
As the Borel counterpart of Cauchy product
in $\C[[x^{-1}]]$,
the convolution product $\hat{\ph}*\hat{\psi}$
of $\hat{\ph}$ and $\hat{\psi}$ in $\C\{\xi\}$
is defined as follows:
\begin{equation*}
\hat{\ph}*\hat{\psi}(\xi)
:=\int_0^\xi
\hat{\ph}(\xi-\xi')\hat{\psi}(\xi')d\xi'.
\end{equation*}
%
To discuss analytic continuation
of such a convolution product,
the notion of
\emph{symmetrically contractible path}
was introduced in \cite{E}.
Following the principle in \cite{CNP},
systematic construction of such paths
was given in \cite{S2} and
detailed estimates for the convolution product
of an arbitrary number 
of endlessly continuable functions
were obtained in \cite{S3}
when the set of singular points of
the functions is a closed discrete subset in $\C$
and closed under addition (see also \cite{MS}).
Further,
it was
generalized in \cite{OD}
and \cite{KS} to the case
where the location of singular points
of endlessly continuable functions
is written by a discrete filtered set
(see Definition \ref{dfn:2.1} for its definition).
Especially in \cite{KS},
a rigorous foundation
for the analysis on the space of
such endlessly continuable functions
was provided:
a structure of Fr\'echet space
on the space of the functions was
precisely given by the aid of endless Riemann
surfaces (see Section \ref{sec:2}).
It allows us to handle analytical problems
related to the convergence of the functions,
e.g. substitution of resurgent functions
to convergent series,
implicit function theorem for resurgent functions.

However,
we have still a problem in applying
resurgent analysis to the study of analytical differential equations:
there is no universally applicable way of proving
the resurgence of formal series solutions of
differential equations.
Especially,
it is important to determine
the location of singular points
of the Borel transformed formal series solutions
for the use of alien calculus, which is the main tool of
resurgent analysis.

Having these backgrounds in mind,
we discuss the following question in this article:
Can we extend the principle in \cite{KS}
so that we can show the resurgence of
formal series solutions of
differential equations?
The main purpose of this article
is to establish an iteration method
on the space of resurgent functions by
developing the method in
\cite{KS}
and to show the resurgence of formal series solutions
of differential equations by applying it.
More precisely, we consider 
a nonlinear differential equation
\begin{equation}\label{1.1}
\frac{d}{dx}\Phi
=F(x^{-1},\Phi)
\end{equation}
at $x=\infty$ with
$F(x^{-1},\Phi)\in\C^n\{ x^{-1},\Phi\}$
satisfying the conditions
$
F(0,0)=0
$
and
$
{\rm det} \big(\partial_{\Phi}F(0,0)\big)
\neq0.
$
In this setting,
\eqref{1.1} has a unique formal series solution
$\Phi\in\C^n[[x^{-1}]]$.
In \cite{E}, \'Ecalle claims
each entry of $\Phi$ is resurgent.
In \cite{Co},
Borel summability of 
transseries solutions of \eqref{1.1}
was discussed
and the singularity structure in the Borel plane
of the solutions was precisely studied
under non-resonance conditions.
(See \cite{Ku}, \cite{BKu} and \cite{BKu2}
for the case of the difference equation \eqref{7.13}.)
However, our standpoint is close to \'Ecalle's
\emph{mould calculus}
rather than \cite{Co}.
Mould calculus was developed in \cite{E}
and applied to the classification
of saddle-node singularities in \cite{E2}
(see also \cite{S8} and \cite{S9}).
It uses expansions by resurgent monomials
associated with words generated by e.g. $\Z$
and resurgent properties of formal integrals
was studied by the use of the mould expansions.
In this article,
we use an expansion of $\hat\Phi$ by 
iterated convolution products 
of meromorphic functions
associated with \emph{iteration diagrams} 
(see Definition \ref{dfn:3.1})
instead of words.
Iterated convolution product
is a combination of convolution
product and Cauchy product
determined by iteration diagrams
(see Definition \ref{dfn:3.6}).
Extending the estimates obtained in \cite{KS}
to iterated convolution products 
of endlessly continuable functions,
we obtain the following theorem as
one of our main results:
\begin{thm}
The formal series solution $\Phi\in\C^n[[x^{-1}]]$
of \eqref{1.1} is resurgent.
\end{thm}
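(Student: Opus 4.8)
\emph{Sketch of proof.}
The plan is to pass to the Borel plane, turn \eqref{1.1} into a fixed point equation in the convolution algebra, solve it by iteration so as to expand $\hat\Phi$ over iteration diagrams, and then invoke the estimates for iterated convolution products in order to see that this expansion converges in the Fr\'echet space of endlessly continuable functions built in Section \ref{sec:2}.

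First I would put \eqref{1.1} into a convenient form. Set $L:=\partial_\Phi F(0,0)$, which is invertible since $\det\partial_\Phi F(0,0)\neq0$, and split
\begin{equation*}
F(x^{-1},\Phi)=F(x^{-1},0)+L\Phi+G(x^{-1},\Phi),\qquad G(x^{-1},0)=0,\quad\partial_\Phi G(0,0)=0,
\end{equation*}
so that $G$ is of joint order $\geq2$ in $(x^{-1},\Phi)$; then \eqref{1.1} reads $\bigl(L-\tfrac{d}{dx}\bigr)\Phi=-F(x^{-1},0)-G(x^{-1},\Phi)$. Since $\cB(\tfrac{d}{dx}\Phi)=-\xi\hat\Phi$, since multiplication by $x^{-1}$ becomes primitivation, and since the Cauchy product becomes the convolution product, applying $\cB$ yields the fixed point equation
\begin{equation*}
\hat\Phi=-(L+\xi)^{-1}\hat a(\xi)-(L+\xi)^{-1}\sum_{k,\al}c_{k,\al}\,\frac{\xi^{k-1}}{(k-1)!}*\hat\Phi^{*\al},
\end{equation*}
where $\hat a:=\cB\bigl(F(x^{-1},0)\bigr)$, the $c_{k,\al}$ are the Taylor coefficients of $G$, the sum runs over the relevant multi-indices (the factor $\xi^{k-1}/(k-1)!\,*$ being dropped when $k=0$), and $(L+\xi)^{-1}:=(L+\xi I)^{-1}$ is a meromorphic $n\times n$ matrix — acting here by (pointwise) Cauchy multiplication — which is holomorphic at $\xi=0$ because $L$ is invertible and whose poles lie in $-\operatorname{Spec}(L)$. (One may conjugate $L$ to Jordan form beforehand, but this is inessential.) Solving this equation by iteration produces, after checking term by term that the output reproduces the formal Borel transform of the formal solution $\Phi$, an expansion $\hat\Phi=\sum_T\hat\Phi_T$ in which $T$ ranges over iteration diagrams (Definition \ref{dfn:3.1}) and each $\hat\Phi_T$ is an iterated convolution product (Definition \ref{dfn:3.6}) — a prescribed combination of convolution and Cauchy products — of the fixed building blocks $(L+\xi)^{-1}$, $\hat a$ and $\xi^{k-1}/(k-1)!$.

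Since $F$ is a convergent series, $\hat a$ and all auxiliary functions coming from $G$ are entire, so the singular points of the $\hat\Phi_T$ are produced only by the poles of $(L+\xi)^{-1}$ and, under convolution, by their finite sums; hence every $\hat\Phi_T$ is endlessly continuable with singular support contained in the discrete filtered set $\Omega$ generated by $-\operatorname{Spec}(L)$ under addition, whose $N$-th level — sums of at most $N$ eigenvalues of $L$ — is a finite set. It then remains to sum over $T$, and this is the step I expect to be the main obstacle. One has to estimate $\hat\Phi_T$ along symmetrically contractible paths with a denominator of factorial type that overcomes both the combinatorial growth of the number of iteration diagrams of a given size and the apparent loss created by the factor $\xi$ in $\cB(\tfrac{d}{dx}\Phi)$; the latter loss is actually harmless because $(L+\xi)^{-1}$ decays like $\xi^{-1}$ at infinity, but near the points of $\Omega$ the poles of $(L+\xi)^{-1}$ sit precisely on the singular support and interact with the convolutions, so the estimates of \cite{S3} and \cite{KS} for endlessly continuable functions with singularities on a discrete filtered set must be extended to iterated convolution products — retaining the crucial factorial gain, together with the geometric decay furnished by the coefficients $c_{k,\al}$ of the convergent series $G$. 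Granting such estimates, $\sum_T\hat\Phi_T$ converges in the Fr\'echet space of Section \ref{sec:2}; hence $\hat\Phi$ is endlessly continuable, and consequently each entry of $\Phi$ is resurgent. $\square$
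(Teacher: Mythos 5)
Your sketch follows the paper's route essentially exactly: isolate the leading linear part, Borel-transform to a fixed-point equation with kernel $P^{-1}=-(L+\xi)^{-1}$, expand $\hat\Phi$ over iteration diagrams (decorated by component indices in the paper, Lemma \ref{lmm:7.2}), and bound each term by the iterated-convolution estimate of Theorem \ref{thm:3.9}/Corollary \ref{crl:3.10} together with a combinatorial count (Lemma \ref{lmm:7.3}) to get convergence of $\sum_k\hat\Phi_k$ in $\hat\sR_{\Omega^{*\infty}}$, with $\Omega$ generated by $-\operatorname{Spec}\,\partial_\Phi F(0,0)$. One small point to tighten: the paper first substitutes $\tilde\Phi(x)=x^{-1}(\Phi(x)-\Phi_1 x^{-1})$ so that every coefficient $F_\ell$ vanishes at $x^{-1}=0$, whereas your ``joint order $\geq 2$'' normalization of $G$ does not by itself exclude a quadratic term with constant coefficient in $x^{-1}$, which would force the corresponding $\hat f_v$ to be $\delta\notin\hat\sR_{\O}$ and fall outside the iterated-convolution framework.
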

In Section \ref{sec:7},
we describe detailed geometrical structure of
singular points of $\hat\Phi$
by the use of discrete filtered set
and reveal how the singular points
are generated by the set of eigenvalues of 
$\partial_{\Phi}F(0,0)$.

The plan of this article is the following:
\begin{enumerate}[--]
\item
Section \ref{sec:2} reviews 
the notions and the results
related to $\Om$-resurgence.

\item
Section \ref{sec:3}
introduces the notions of
iteration diagram and iterated convolution.
We give a key-estimate Theorem \ref{thm:3.9} for 
iterated convolution products of
$\Om$-resurgent functions.

\item
Section \ref{sec:4} discusses the analytic continuation
of iterated convolution products along a path $\ga$
using a $(\ga,T)$-adapted deformation.

\item
Section \ref{sec:5} and Section \ref{sec:6}
are devoted
to the proof of Theorem \ref{thm:4.6}:
We construct a $(\ga,T)$-adapted deformation
$(\Psi_t)_{t\in[a,1]}$ in Section \ref{sec:5}
and derive its estimates in Section \ref{sec:6}.

\item
In Section \ref{sec:7},
we show the resurgence of formal series solutions
of nonlinear differential and difference equations
using the estimate Theorem \ref{thm:3.9}.

\end{enumerate}

Some of the results in this article
have been announced in \cite{K}.


\section{Preliminaries}
\label{sec:2}


In this section,
we review the notions concerning
$\Om$-resurgence of formal series
discussed in \cite{KS}.

\begin{dfn}\label{dfn:2.1}
We use the notation
$\Rp = \{\la\in\R\mid\la\geq0\}$.
\begin{enumerate}
\item
A \emph{discrete filtered set}, or \emph{\dfs}\ for short, 
is a family $\Om = (\Om_L)_{L\in\Rp}$ of
subsets of~$\C$ such that
\begin{enumerate}[a)]
\item
$\Om_L$ is a finite set, 
\item
$\Om_{L_1}\subseteq \Om_{L_2}$ for $L_1\leq L_2$,
\item
there exists $\de>0$ such that $\Om_\de=\O$.
\end{enumerate}
\item
Let~$\Om$ and~$\Om'$ be \dfs\ 
A \dfs\ $\Om*\Om'$ 
defined by the formula
\[
(\Om *\Om')_L := \{\, \om_1+\om_2 \mid
\om_1\in\Om_{L_1}, \om_2\in\Om'_{L_2}, L_1+L_2=L \, \}
\cup\Om_{L}\cup\Om'_{L}
\ \text{ for $L\in\Rp$}
\]
is called the \emph{sum} of \dfs\ $\Om$ and~$\Om'$.
We set
$\Om^{*n} :=
\underbrace{ \Om* \cdots*\Om }_{\text{$n$ times}}$
for $n\ge1$ and define a \dfs\ 
$\Om^{*\infty}$ by
\[
\Om^{*\infty} :=\varinjlim_n\ \Om^{*n}.
\]
\item
A \emph{trivial} \dfs\ $\Om=(\Om_L)_{L\in\Rp}$
is a \dfs\ satisfying $\Om_L=\O$ for all $L\in\Rp$
and we denote it by $\O$.
\item
Given a \dfs\ $\Om$,
the \emph{distance} to $\Om$
is the number
$\rho(\Om):=\sup\{\,\rho\in\Rp\mid \Om_\rho=\O\,\}$.

\end{enumerate}
\end{dfn}

We define for a \dfs~$\Om$
\[
\cS_\Om:= \big\{ (\la,\om)\in\Rp\times\C \mid \om\in\Om_\la \big\},
\]
\[
\cM_\Om := \big(\Rp\times\C\big) \setminus \ov\cS_\Om,
\]
where $\ov\cS_\Om$ denotes the closure of $\cS_\Om$ in 
$\Rp\times\C$. 

Let $\Pi$ be the set of all Lipschitz paths
$\ga : [0,1] \to \C$ such that $\ga(0)=0$.
We denote
the restriction of~$\ga\in\Pi$ to the
interval $[0,t]$ for $t\in[0,1]$
by $\ga_{|t}$
and the total length of~$\ga_{|t}$
by $L(\ga_{|t})$.

\begin{dfn}   \label{dfn:2.2}
Given a \dfs~$\Om$,
we call $\ga\in\Pi$
\emph{$\Om$-allowed path}
if it satisfies
\[
\ti\ga(t) :=
\big( L(\ga_{|t}), \ga(t) \big) \in \cM_\Om
\quad
\text{for all $t\in[0,1]$,}
\]
and denote the set of all $\Om$-allowed paths
by $\Pi_\Om$.
\end{dfn}

\begin{rem}
When a piecewise $C^1$ path
$t \in [0,1] \mapsto 
\ti\ga(t) = \big( \la(t),\ga(t) \big) \in \cM_\Om$
with $\ti\ga(0)=(0,0)$
is given,
the $\Om$-allowedness of $\ga$ is characterized
by the condition
$\la'(t) = |\ga'(t)|$ for a.e.~$t\in[0,1]$.
\end{rem}
Recall that
an $\Om$-endless Riemann surface is a triple $(X,\fp,\un0)$ 
such that
$X$ is a connected Riemann surface,
$\fp : X \to \C$ is a local biholomorphism,
$\un0 \in \fp(0)$,
and any path $\ga : [0,1] \to \C$ of~$\Pi_\Om$ has a lift
$\uga : [0,1] \to X$ such that $\un\ga(0) = \un0$.
A morphism 
$
\fq:(X,\fp,\un0)\to(X',\fp',\un0')
$
of $\Om$-endless Riemann surfaces
is given by a local biholomorphism $\fq:X\to X'$
such that the following diagram is commutative:
\[
\begin{xy}
(0,20) *{(X,\un0)},
(30,20) *{\ (X',\un0')},
(15,0) *{(\C,0)},
{(6,20) \ar (24,20)},
{(2,16) \ar (13,4)},
{(28,16) \ar (17,4)},
(14,23) *{\fq},
(3,10) *{\fp},
(27,10) *{\fp'}
\end{xy}
\]

The existence of the initial object
$(X_{\Om},\fp_\Om,\un0_\Om)$ in the category
of $\Om$-endless Riemann surfaces
was proved in \cite{KS}:
\begin{thm}[\cite{KS}]   \label{thm:2.4}
There exists an $\Om$-endless Riemann surface 
$(X_{\Om},\fp_\Om,\un0_\Om)$ such
that $X_\Om$ is simply connected
and, for any $\Om$-endless Riemann surface $(X,\fp,\un0)$,
there is a unique morphism 
$$
\fq : (X_{\Om},\fp_\Om,\un0_\Om) \to  (X,\fp,\un0).
$$
\end{thm}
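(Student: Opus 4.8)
The plan is to construct $(X_\Om,\fp_\Om,\un0_\Om)$ explicitly as a space of homotopy classes of $\Om$-allowed paths and then to obtain the universal property by a monodromy-type argument valid for endless surfaces. I would set $X_\Om := \Pi_\Om/\!\sim$, where $\ga\sim\ga'$ means $\ga(1)=\ga'(1)$ and there is a Lipschitz map $H:[0,1]\times[0,1]\to\C$ with $H(\cdot,s)=:\ga_s\in\Pi_\Om$ for every $s$, $\ga_0=\ga$, $\ga_1=\ga'$, $\ga_s(0)=0$ and $\ga_s(1)=\ga(1)$ for all $s$; requiring $H$ Lipschitz forces a uniform bound $\sup_s L(\ga_s)<\infty$, which is what makes the constructions below compatible with $\sim$. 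I put $\fp_\Om([\ga]):=\ga(1)$ and let $\un0_\Om$ be the class of the constant path at $0$. The key technical ingredient is a short-extension lemma: for each $\ga\in\Pi_\Om$ there is $\eps(\ga)>0$ such that, writing $\sigma_q$ for the oriented segment from $\ga(1)$ to $q$, the concatenation $\ga\cdot\sigma_q$ lies in $\Pi_\Om$ for every $q\in\D(\ga(1),\eps(\ga))$; this rests on the openness of $\cM_\Om$ together with the observation that only the finite set $\Om_{L(\ga)+\eps(\ga)}$ of singular points can obstruct paths of length $\le L(\ga)+\eps(\ga)$. The sets $U_{[\ga],\eps}:=\{\,[\ga\cdot\sigma_q]\mid q\in\D(\ga(1),\eps)\,\}$ then form the basis of a topology, the maps $[\ga\cdot\sigma_q]\mapsto q$ are charts with holomorphic transition maps (the compatibility of $[\ga\cdot\sigma_q]$ with $\sim$, for $q$ close to $\ga(1)$, again uses the uniform length bound), $\fp_\Om$ becomes a local biholomorphism, and the usual point-set verifications (Hausdorffness via reversal of short segments; second countability since $X_\Om$ is covered by the countably many charts around classes of polygonal $\Om$-allowed paths with vertices in a fixed countable dense subset of $\C$) make $X_\Om$ a Riemann surface.

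Next I would check the three remaining properties. Because the lift of $\ga_{|t}$ into $\cM_\Om$ is merely $\ti\ga$ restricted to $[0,t]$, one has $\ga_{|t}\in\Pi_\Om$ for every $t$; hence $\un\ga(t):=[\ga_{|t}]$ is a continuous lift of an arbitrary $\ga\in\Pi_\Om$ with $\un\ga(0)=\un0_\Om$ and $\fp_\Om\circ\un\ga=\ga$, which simultaneously shows that $X_\Om$ is path-connected and that $(X_\Om,\fp_\Om,\un0_\Om)$ is $\Om$-endless. For simple connectedness: after a harmless reparametrization, a loop in $X_\Om$ based at $\un0_\Om$ is the canonical lift $t\mapsto[\ga_{|t}]$ of its projection $\ga\in\Pi_\Om$, and the loop closing up forces $[\ga]=\un0_\Om$, i.e.\ $\ga$ is contractible inside $\Pi_\Om$; lifting such a contraction through the charts contracts the loop in $X_\Om$.

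For the universal property, given an $\Om$-endless Riemann surface $(X,\fp,\un0)$ I would set $\fq([\ga]):=\un\ga(1)$, where $\un\ga$ is the lift of $\ga$ to $X$ with $\un\ga(0)=\un0$ supplied by $\Om$-endlessness; this lift is unique because $\fp$ is a local homeomorphism of Hausdorff spaces. Granting well-definedness of $\fq$ on $\sim$-classes (discussed below), the identities $\fp\circ\fq=\fp_\Om$ and $\fq(\un0_\Om)=\un0$ are immediate, reading $\fq$ in the charts above exhibits it as a local biholomorphism, and uniqueness follows because any morphism $\fq'$ must send the canonical lift $t\mapsto[\ga_{|t}]$ in $X_\Om$ to a lift of $\ga$ in $X$ starting at $\fq'(\un0_\Om)=\un0$, hence to $\un\ga$, so $\fq'([\ga])=\un\ga(1)=\fq([\ga])$.

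The step I expect to be the real obstacle is the well-definedness of $\fq$: if $\ga\sim\ga'$ through a family $(\ga_s)\subset\Pi_\Om$, one must show the lifts to $X$ satisfy $\un\ga(1)=\un{\ga'}(1)$. Since $X$ is only an endless surface, not a covering of $\C$, this is not the classical monodromy theorem; instead I would prove $s\mapsto\un{\ga_s}(1)$ is locally constant. Fixing $s_0$, I would cover the compact arc $\un{\ga_{s_0}}([0,1])$ by finitely many open sets on which $\fp$ is biholomorphic, take a subordinate partition $0=t_0<\cdots<t_m=1$, note that by uniform continuity of $H$ the paths $\ga_s$ with $s$ near $s_0$ still run inside the corresponding planar images, and piece together the local inverses of $\fp$ along the partition to build $\un{\ga_s}$ explicitly, whence $\un{\ga_s}(1)=\un{\ga_{s_0}}(1)$ for all such $s$. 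Connectedness of the parameter interval then yields $\un\ga(1)=\un{\ga'}(1)$, and the construction is complete.
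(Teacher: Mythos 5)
The paper does not contain its own proof of Theorem~\ref{thm:2.4}; it cites \cite{KS}, which constructs $X_\Om$ precisely as a space of homotopy classes of $\Om$-allowed paths with the topology of germ-continuation, so your overall strategy and your derivation of the universal property via unique lifting and a local-monodromy argument are the same as in the reference. The universal-property part of your write-up is in good shape: the monodromy argument you give (cover the compact lifted arc by finitely many biholomorphic patches, piece together local inverses, conclude $s\mapsto\un\ga_s(1)$ is locally constant) is exactly the right replacement for the classical covering-space monodromy theorem, and your uniqueness argument is correct.

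The genuine gap is in the way you set up the equivalence relation, and it propagates into the point-set verifications. You require only that $H:[0,1]^2\to\C$ be Lipschitz and that each slice $\ga_s=H(\cdot,s)$ lie in $\Pi_\Om$. This bounds $\sup_sL(\ga_s)$, but it does \emph{not} give continuity of the $\Rp$-component $s\mapsto L(\ga_s|_{[0,t]})$: length is only lower semicontinuous under $C^0$ (even Lipschitz) perturbation of $H$, because the control on $\pa_tH$ for different $s$ need not be compatible. The quantity that actually matters for everything downstream is the lifted path $\ti\ga_s(t)=\bigl(L(\ga_s|_{[0,t]}),\ga_s(t)\bigr)\in\cM_\Om$, and your hypotheses do not force $(t,s)\mapsto\ti\ga_s(t)$ to be continuous. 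Concretely, this breaks your Hausdorff argument ``by reversal of short segments'': to conclude $\ga_1\sim\ga_2$ from $\ga_1\cdot\sigma_q\sim\ga_2\cdot\sigma_q$ you want to append $\sigma_q^{-1}$ to the connecting homotopy $(\ga'_s)$, and for that you need the points $\ti\ga'_s(1)=(L(\ga'_s),q)$ to stay \emph{uniformly} away from $\ov\cS_\Om$ as $s$ varies; with only lower semicontinuity of $L(\ga'_s)$ you cannot extract a positive infimum by compactness, so the appended paths may fail to be $\Om$-allowed. The same issue lurks in your assertion that the chart maps $[\ga\cdot\sigma_q]\mapsto q$ are compatible with $\sim$. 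The fix, and what \cite{KS} effectively does, is to formulate the homotopy at the level of the lifts: require the map $(t,s)\mapsto\ti\ga_s(t)\in\cM_\Om\subset\Rp\times\C$ (equivalently, the pair of $H$ together with the length profile) to be continuous, or Lipschitz; then $s\mapsto\dist(\ti\ga_s(t),\ov\cS_\Om)$ is continuous, the compactness argument does give a uniform positive lower bound, and Hausdorffness, the chart compatibility, and the ``harmless reparametrization'' step in your simple-connectedness argument all go through cleanly. With that modification the proof is correct and matches the route of the cited reference.
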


Let $\hat\sR_\Om$ denote the space of
$\Om$-continuable functions,
i.e.,
holomorphic germs $\hat\ph \in \C\{\xi\}$ which can be analytically
continued along any path $\ga\in\Pi_{\Om}$.
Then,
there exists an isomorphism
\[
\fp_\Om^* :
\hat\sR_\Om \isom \Ga(X_{\Om},\sO_{X_{\Om}}),
\]
where $\sO_{X_\Om}$ is the sheaf of holomorphic functions
on $X_\Om$,
and hence,
a structure of Fr\'echet space is
naturally introduced
to $\hat\sR_\Om$ as follows:
We set for $L,\de>0$
$$
\cM_\Om^{\de,L} := \big\{\,
(\la,\xi) \in \Rp\times\C \mid
\dist\big( (\la,\xi),  \ov\cS_\Om \big) \ge \de,
\la\leq L
\big\},
$$
$$
\Pi_\Om^{\de,L} := \big\{\, \ga\in\Pi_\Om \mid
\big(L(\ga_{|t}),\ga(t)\big)\in\cM_\Om^{\de,L}
\;\, \text{for all $t\in[0,1]$} \,\big\},
$$
where 
$\dist(\cdot,\cdot)$ is the Euclidean distance in 
$\R\times\C\simeq \R^3$,
and define compact subsets
$K_{\Om}^{\de,L}$ of $X_\Om$ by
$$
K_{\Om}^{\de,L} := \big\{\, \un\ga\,(1)
\in X_\Om \mid
\ga\in\Pi_\Om^{\de,L}
\,\big\}.
$$
Since
$X_\Om$ is exhausted by $(K_\Om^{\de,L})_{\de,L>0}$,
a family of
seminorms $\|\cdot\|_{\Om}^{\de,L}$ $(\de,L>0)$
defined by
$$
\|\, \hat\ph\,
\|_{\Om}^{\de,L}
:=\sup_{\un\xi\in K_\Om^{\de,L}}
|\, \fp_\Om^{*}\hat\ph(\un\xi)|
\quad
\text{for}
\quad
\hat\ph\in\hat\sR_\Om
$$
induces
a structure of
Fr\'echet space on 
$\hat\sR_\Om$.
Correspondingly,
a family of seminorms
$\|\cdot\|_{\Om}^{\de,L}$ $(\de,L>0)$
on the space of $\Om$-resurgent series
\[
\sR_\Om := \cB^{-1} \big( \C\de \oplus \hat\sR_\Om \big)
\]
are defined by
$$
\|\, \ph\,
\|_{\Om}^{\de,L}
:=
|\ph_0|+
\|\,\hat\ph\,
\|_{\Om}^{\de,L}
\quad
\text{for}
\quad
\ph\in\sR_\Om,
$$
where 
$
\cB(\ph)
=\ph_0\de+\hat\ph
\in\C\de \oplus \hat\sR_\Om.
$

\begin{rem}
Notice that
$
(X_{\O},\fp_{\O},\un0{}_{\O})
\isom(\C,{\rm id}_{\C},0),
$
and hence,
$\hat\sR_{\O}\isom\Ga(\C;\sO_{\C})$.
Since 
$
K_{\O}^{\de,L}=\{\xi\in\C\mid |\xi|\leq L\}
$
for $\de,L>0$,
we have
$\dst
\|\, \hat\ph\,
\|_{\O}^{\de,L}
=\sup_{|\xi|\leq L}|\hat\ph(\xi)|
$
for $\hat\ph\in\hat\sR_{\O}$.
\end{rem}

Now, let $\Om'$
be a \dfs\ satisfying
$\Om\subset\Om'$.
From Theorem \ref{thm:2.4},
we find that there exists a morphism
$
\fq:(X_{\Om'},\fp_{\Om'},\uO_{\Om'})
\to (X_{\Om},\fp_\Om,\uO_\Om).
$
Since 
$
\fq(K_{\Om'}^{\de,L})\subset K_\Om^{\de,L},
$
we have
$$
\|\,\hat\ph\,
\|_{\Om'}^{\de,L}
\leq
\|\, \hat\ph\,
\|_{\Om}^{\de,L}
\quad
\text{for}
\quad
\hat\ph\in\hat\sR_\Om,
$$
and hence,
$$
\|\, \ph\,
\|_{\Om'}^{\de,L}
\leq
\|\, \ph\,
\|_{\Om}^{\de,L}
\quad
\text{for}
\quad
\ph\in\sR_{\Om}.
$$

\section{Iterated convolution of resurgent functions}
\label{sec:3}
%
\subsection{Iteration diagram}
%
\begin{dfn}\label{dfn:3.1}
Let $T=(V,E)$ be a directed tree diagram,
where $V$ (resp. $E$) is the set of vertices (resp. edges)
of $T$.
We call $T$ \emph{iteration diagram} if $T$ satisfies
the condition that
any vertex $v\in V$ has 
at most one outgoing edge.
We denote the set of iteration diagrams by
$\mathscr{T}$.
\end{dfn}
Since $T\in\mathscr{T}$ is connected and has no cycles,
we immediately have the following
\begin{lmm}\label{lmm:3.2}
Each $T\in\mathscr{T}$ has a unique vertex $\hat v$ such that
there exists a path 
$v\to \cdots \to \hat v$
from $v$ to $\hat v$ in $T$
for any vertex $v\in V$
and such a path is unique.
\end{lmm}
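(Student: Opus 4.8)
The plan is to exploit the two defining features of an iteration diagram $T=(V,E)$: it is a directed tree (hence connected and acyclic as an undirected graph), and every vertex has at most one outgoing edge. First I would observe that these two conditions together force a very rigid shape on $T$: from any vertex $v$ there is at most one edge leaving $v$, so there is at most one way to take a ``forward step'' along the directed edges, and since $T$ is finite and acyclic this forward walk cannot cycle and must terminate. I would make this precise by defining, for each $v\in V$, the maximal forward walk $v=v_0\to v_1\to v_2\to\cdots$, where $v_{i+1}$ is the unique head of the unique outgoing edge at $v_i$ whenever such an edge exists. Because $V$ is finite and $T$ has no cycles, this walk cannot repeat a vertex and so must stop after finitely many steps at a vertex $\hat v(v)$ with no outgoing edge — call such a vertex a \emph{sink}.

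Next I would show that the sink is independent of the starting vertex $v$, i.e.\ that $T$ has a unique sink $\hat v$. Suppose $\hat v_1$ and $\hat v_2$ are two sinks. Since the underlying undirected graph of $T$ is a tree, there is a unique undirected path $P$ joining $\hat v_1$ to $\hat v_2$. Consider the edge of $P$ incident to $\hat v_1$: because $\hat v_1$ has no outgoing edge, this edge must be incoming to $\hat v_1$, so along $P$ it points ``toward'' $\hat v_1$. Likewise the edge of $P$ incident to $\hat v_2$ points toward $\hat v_2$. Hence as we traverse $P$ from $\hat v_1$ to $\hat v_2$ the edge orientations must reverse somewhere, which produces an internal vertex $w$ of $P$ with two outgoing edges along $P$ — contradicting the iteration-diagram condition. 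Therefore $\hat v_1=\hat v_2$, and $T$ has a unique sink $\hat v$. In particular $\hat v(v)=\hat v$ for every $v$, so the forward walk from any vertex reaches $\hat v$, giving existence of a directed path $v\to\cdots\to\hat v$.

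Finally I would establish uniqueness of that path. Suppose $v\to\cdots\to\hat v$ admits two distinct directed paths. Reading each path one edge at a time from $v$, the first edge is the unique outgoing edge at $v$, so both paths agree on it; inductively, at each intermediate vertex the next edge is again the unique outgoing edge there, so the two paths must coincide — contradiction. (Alternatively, two distinct directed paths with the same endpoints would create a cycle in the underlying tree, which is impossible.) This proves that the path from $v$ to $\hat v$ is unique, completing the lemma.

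The only genuinely delicate point is the uniqueness of the sink; everything else is an immediate consequence of finiteness plus the ``at most one outgoing edge'' hypothesis. The argument for uniqueness of the sink is where one must actually use that $T$ is a \emph{tree} (connectedness to get the path $P$, acyclicity to make it unique) rather than merely a forest, so I would be careful to invoke connectedness explicitly there.
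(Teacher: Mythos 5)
Your proof is correct. The paper itself offers no proof: it simply remarks that ``Since $T\in\mathscr{T}$ is connected and has no cycles, we immediately have the following'' and then states the lemma, so you are supplying details the authors regard as routine. Your three steps — the forward walk from any $v$ terminates (finiteness plus acyclicity), the sink is unique (watershed argument along the undirected path between two putative sinks), and the directed path to $\hat v$ is unique (at each vertex the next edge is forced) — are all sound. One small thing worth making explicit is the final clause of the lemma, the uniqueness of $\hat v$ itself: if $\hat v'$ also had the stated property, then in particular there would be a directed path $\hat v\to\cdots\to\hat v'$, and since $\hat v$ has no outgoing edge this path is trivial, giving $\hat v'=\hat v$; this is implicit in your argument but not spelled out. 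As an aside, the watershed step can be replaced by a one-line edge count: every non-sink vertex has exactly one outgoing edge and every edge has exactly one tail, so $|E|=|V|-s$ with $s$ the number of sinks, while the tree relation $|E|=|V|-1$ gives $s=1$. Either way the lemma holds.
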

\begin{dfn}
Let $T=(V,E)$ be an iteration diagram.
\begin{enumerate}
\item
We call $\hat v$ in Lemma \ref{lmm:3.2}
\emph{root} of T.

\item
We call a vertex $v$ \emph{leaf}
of $T$ if $v$ has no edge $e$
such that the terminal vertex of $e$ is $v$
and denote the set of leaves of $T$ by $L$.

\item
The \emph{branch} $T_v=(V_v,E_v)$ of $T$ at $v\in V$
is the diagram that consists of the vertexes
$u\in V$
that have a path 
$u\to \cdots \to v$
from $u$ to $v$ in $T$
and the edges 
$v_1\overset{e}{\to}v_2\in E$ such that
$v_1,v_2\in V_v$.
\end{enumerate}
\end{dfn}
From the definition of the branch,
we obtain the following
\begin{lmm}
Given $T\in\sT$,
the branch $T_v$ of $T$
at each vertex $v\in V$
defines an iteration diagram
with the root $v$.
\end{lmm}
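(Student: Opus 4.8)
The statement to prove is: given $T \in \sT$, the branch $T_v$ of $T$ at a vertex $v \in V$ is itself an iteration diagram with root $v$. I need to verify (i) $T_v$ is a directed tree diagram, (ii) every vertex of $T_v$ has at most one outgoing edge (the defining property of an iteration diagram, Definition \ref{dfn:3.1}), and (iii) the root of $T_v$, in the sense of Lemma \ref{lmm:3.2}, is exactly $v$.

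Let me think about how I'd actually prove each piece.
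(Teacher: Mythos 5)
Your proposal stops exactly where the proof would need to begin: you correctly identify the three things to verify---that $T_v$ is a directed tree, that every vertex of $T_v$ has at most one outgoing edge, and that $v$ is the root of $T_v$ in the sense of Lemma \ref{lmm:3.2}---but then you write ``Let me think about how I'd actually prove each piece'' and the argument ends. Nothing is actually established. The paper treats this lemma as an immediate consequence of the definition of the branch (and indeed gives no proof), but your submission is not even an informal version of that; it is only a roadmap.

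To close the gap you would need, at minimum: (i) observe that for every $u\in V_v$ the unique path $u\to\cdots\to v$ in $T$ lies entirely inside $T_v$ (any vertex on that path has a path to $v$, hence belongs to $V_v$ by definition), so $T_v$ is connected with all paths terminating at $v$; acyclicity is inherited from $T$ since $E_v\subset E$. (ii) For the out-degree bound, note that $E_v\subset E$, so every vertex of $T_v$ has at most as many outgoing edges in $T_v$ as in $T$, hence at most one. (iii) For the root, the vertex $v$ has no outgoing edge inside $T_v$ (its unique outgoing edge in $T$, if any, leads to $v_\upa\notin V_v$), and every $u\in V_v$ reaches $v$ by the path from step (i); uniqueness of $v$ and of the path follows from acyclicity exactly as in Lemma \ref{lmm:3.2}. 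Writing out these three observations is what turns your outline into a proof.
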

\begin{nota}
Let $T=(V,E)$ be an iteration diagram.
\begin{enumerate}
\item
We set $V^{\circ}:=V\setminus\{\hat v\}.$

\item
For each $v\in V^{\circ}$,
there exists a unique vertex $u$ that has
an edge $v\to u$.
We denote such a vertex by $v_\upa$.

\item
Given $v\in V$,
we denote
the set of vertices $u\in V$
that have an edge $u\to v$ by
$
V_v^1.
$
\end{enumerate}
\end{nota}

We assign each vertex $v$ a weight $w_v$
defined as the cardinal of
$
\{
v'\in L\mid
\exists\,\text{a path}\ 
v'\to\cdots\to v
\}.
$
Notice that $w_v$ satisfies
%
%
$$
\left\{
\begin{aligned}
w_v&=1&\qquad&(v\in L),\\
w_v&=\sum_{u\in V_v^1}w_u
&\qquad&(v\in V\setminus L).
\end{aligned}
\right.
$$
%
Iteration diagrams are graded by the cardinal $|V|$
of vertexes:
$$
\sT=
\bigsqcup_{k=1}^\infty
\sT_k,
\quad
\sT_k=\{T=(V,E)\in\sT\mid
|V|=k\}.
$$

\subsection{Iterated convolution}

Let $T=(V,E)\in\sT_k$ $(k\geq1)$ be an iteration diagram
and assume that
analytic germs
$\hat f_v,\hat \ph_v\in\C\{\xi\}$ are
assigned to each vertex $v\in V$.
Starting from the leaves of $T$,
we inductively construct
$\{\hat{\psi}_v\}_{v\in V}$
from 
$\{\hat{f}_v\}_{v\in V}$
and
$\{\hat{\varphi}_v\}_{v\in V}$
by the rule
\begin{equation}\label{3.1}
\hat{\psi}_v
:=
\hat{\varphi}_v\cdot
\Big(\hat{f}_v*
\sideset{}{^*}\prod_{u\in V_v^1}
\hat{\psi}_u
\Big)
\quad
(v\in V),
\end{equation}
where
$\dst
\sideset{}{^*}\prod_{u\in V_v^1}\hat{\psi}_u
$
is the convolution product of $\hat{\psi}_u$
over all the vertices
$u\in V_v^1$
and we regard it as the unit $\delta$
when $v\in L$.

%
%
\begin{dfn}\label{dfn:3.6}
Given $T\in\sT$ and 
$\{\hat{f}_v\}_{v\in V},
\{\hat{\varphi}_v\}_{v\in V}\subset\C\{\xi\}$,
we call $\hat \psi_{T}:=\hat \psi_{\hat v}$
defined by the rule \eqref{3.1}
\emph{iterated convolution} of 
$\big(\,T\,;\{\hat{f}_v\}_{v\in V},\{\hat{\varphi}_v\}_{v\in V}\big)$.
\end{dfn}

\begin{nota}
For an iteration diagram
$T=(V,E)\in\sT_k$ $(k\geq1)$,
we set
$$
\Delta_T
:=
\Big\{(s_v)_{v\in V}
\in\Rp^k
\,\Big|\,
\sum_{u\in V_v^1}s_u\leq s_v,
s_{\hat{v}}=1
\Big\}
$$
and 
$[\De_T]\in\sE_{k-1}(\R^k)$ denotes the corresponding integration
current,
where the orientation of $\Delta_T$ is defined
so that it satisfies
\begin{equation}
\int_{\Delta_T}\bigwedge_{v\in V^\circ}\dd s_v
= \frac{1}{(k-1)!}.
\end{equation}
\end{nota}

We set for $\rho>0$
$$ 
D_{\rho} := \{\, \xi\in\C\mid |\xi| < \rho \,\}.
$$
Let $\Om$ be a \dfs\ 
We define a map $\sL_v:D_{\rho(\Om)}\to X_{\Om_v}$ by
$$
\sL_v(\xi):=\un{\ga}{}_\xi(1)
\quad\text{for}\quad \xi\in D_{\rho(\Om)},
$$
where $\Om_v:=\Om^{*w_v}$ $(v\in V)$
and
$\un\ga{}_\xi:[0,1]\to X_{\Om_v}$
is the lift of the path
$\ga_\xi:t\in[0,1]\mapsto t\xi$.
Notice that $\sL_v$ gives a local isomorphism
from $D_{\rho(\Om)}$ to an open neighborhood $\sL_v(D_{\rho(\Om)})$
of $\un0{}_{\Om_v}\in X_{\Om_v}$.
%
%

Now, assume that
$\{\hat{f}_v\}_{v\in V}\subset\hat\sR_{\O}$
and
$\hat{\varphi}_v\in\hat\sR_{\Om_v}$
for $v\in V$.
We consider a map 
\[
\sD(\xi) :
\vec{s}=(s_v)_{v\in V} \mapsto 
\sD(\xi,\vec s\,) := \big( \sL_v(s_v\xi) \big)_{v\in V} \in
X_\Om^T
\quad\text{for}\quad
\xi\in D_{\rho(\Om)}
\]
defined on a neighborhood of $\Delta_T$ in $\R^k$,
where
$$
X_\Om^T :=
\prod_{v\in V} X_{\Om_{v}}.
$$
Let
$\sD(\xi)_\# [\De_T] \in \sE_{k-1}(X_\Om^T)$
denote
the push-forward
of~$[\De_T]$ by~$\sD(\xi)$.
Then, 
we have the following representation of 
the iterated convolution $\hat{\psi}_{T}$ of 
$\big(\,T\,;\{\hat{f}_v\}_{v\in V},\{\hat{\varphi}_v\}_{v\in V}\big)$:

\begin{prp}\label{prp:3.7}
Given $\big(\,T\,;\{\hat{f}_v\}_{v\in V},\{\hat{\varphi}_v\}_{v\in V}\big)$,
define a holomorphic $(k-1)$-form $\beta_T$
on $X_\Om^T$ by
\[ 
\beta_T
:=\Big(\prod_{v\in V}
(\fp_{\Om_v}^*\hat{\varphi}_v)(\un\xi{}_v)
\hat{f}_v\big(\xi_v-\sum_{u\in V_v^1}\xi_u\big)
\Big)
\bigwedge_{v\in V^\circ}\dd \un\xi{}_v,
\]
where
$\dst\bigwedge_{v\in V^\circ}\dd \un\xi{}_v$ is
the pullback
of the $(k-1)$-form $\dst\bigwedge_{v\in V^\circ}\dd \xi_v$
in $X_\Om^T$ by 
$(\fp_{\Om_{v}})_{v\in V}: X_\Om^T \to \C^k$
and $\xi_v=\fp_{\Om_v}(\un\xi{}_v)$ $(v\in V)$.
Then, the following equality
holds for $\xi\in D_{\rho(\Om)}$:
\begin{equation}\label{3.4}
\hat{\psi}_{T}(\xi)=
\sD(\xi)_\# [\De_T](\be_T).
\end{equation}
\end{prp}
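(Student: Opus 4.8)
The plan is to argue by induction on $k=|V|$, matching the inductive construction \eqref{3.1} of the $\hat\psi_v$ against a Fubini-type decomposition of the current $\sD(\xi)_\#[\De_T]$ paired with $\be_T$. For $k=1$ the tree $T$ consists of the single vertex $\hat v=v$ with $V_v^1=\O$, so \eqref{3.1} reads $\hat\psi_v=\hat\ph_v\cdot\hat f_v$ (the empty convolution product being $\de$, which acts as the unit), while $\De_T=\{s_{\hat v}=1\}$ is a point carrying mass $1$, $\be_T=(\fp_{\Om_v}^*\hat\ph_v)(\un\xi_v)\,\hat f_v(\xi_v)$ is a $0$-form, and $\sD(\xi)$ sends the point to $\sL_v(\xi)\in X_{\Om_v}$. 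Evaluating gives $(\fp_{\Om_v}^*\hat\ph_v)(\sL_v(\xi))\,\hat f_v(\xi)=\hat\ph_v(\xi)\hat f_v(\xi)=\hat\psi_v(\xi)$, using that $\fp_{\Om_v}^*\hat\ph_v$ restricted to $\sL_v(D_{\rho(\Om)})$ is just $\hat\ph_v$ transported by the local isomorphism $\sL_v$. This is the base case.

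For the inductive step, fix $T\in\sT_k$ with root $\hat v$ and let $V_{\hat v}^1=\{u_1,\dots,u_m\}$; the branches $T_{u_1},\dots,T_{u_m}$ are iteration diagrams (by the Lemma preceding this Notation) with roots $u_1,\dots,u_m$ and strictly fewer vertices, and $V$ is the disjoint union of $\{\hat v\}$ with the $V_{u_j}$. In $\De_T$ one has the constraint $s_{\hat v}=1$ and $\sum_j s_{u_j}\le 1$; introducing the variable $\sigma:=\sum_j s_{u_j}$ and rescaling each branch's coordinates by $s_{u_j}$, the simplex $\De_T$ fibers over the "root edge" integration, with fibers that are (scaled copies of) the $\De_{T_{u_j}}$'s. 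I would make this precise as an identity of currents: push forward under $\sD(\xi)$ and use the factorization $\sL_v(s_v\xi)=\sL_v\big(s_v/s_{u_j}\cdot(s_{u_j}\xi)\big)$ together with the fact that $\sL_v$ for $v\in V_{u_j}$ is built from the same lifting procedure on $X_{\Om_v}$ regardless of which diagram $v$ is viewed inside. Pairing with $\be_T$ and peeling off the factor $(\fp_{\Om_{\hat v}}^*\hat\ph_{\hat v})(\un\xi_{\hat v})\,\hat f_{\hat v}\big(\xi_{\hat v}-\sum_j\xi_{u_j}\big)\,\dd\un\xi_{u_1}$ (the differential attached to the root, with $\un\xi_{\hat v}$ frozen at $\sL_{\hat v}(\xi)$), the remaining integrand is exactly $\prod_j \be_{T_{u_j}}$ evaluated along $\sD_{u_j}(s_{u_j}\xi)$; by the induction hypothesis each such pairing equals $\hat\psi_{u_j}(s_{u_j}\xi)$. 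What is left is the iterated integral $\hat f_{\hat v}\big(\xi-\sum_j\xi_{u_j}\big)$ convolved against $\prod_j{}^*\hat\psi_{u_j}$ over the region $\{\sum_j|\xi_{u_j}|\le|\xi|\}$, i.e.\ $\hat f_{\hat v}*\big(\sideset{}{^*}\prod_{u\in V_{\hat v}^1}\hat\psi_u\big)$ evaluated at $\xi$, multiplied by $\hat\ph_{\hat v}(\xi)$, which is precisely $\hat\psi_{\hat v}(\xi)=\hat\psi_T(\xi)$ by \eqref{3.1}.

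The main obstacle I anticipate is the bookkeeping in the change-of-variables/current identity: one must check that the orientation convention normalizing $\int_{\De_T}\bigwedge_{v\in V^\circ}\dd s_v=\tfrac{1}{(k-1)!}$ is compatible, under the fibration $\De_T\to[0,1]$ described above, with the product of the corresponding normalizations on the branches, so that no spurious combinatorial factor $\binom{k-1}{\cdots}$ or sign appears; this is where the multinomial structure of the $(k-1)!$ and the weights $w_v$ enters. It is also necessary to verify that the composition $\sL_v$ along scaled rays behaves functorially on the endless Riemann surfaces $X_{\Om_v}$, so that "viewing $v$ inside the branch $T_{u_j}$" versus "inside $T$" yields the same germ $\fp_{\Om_v}^*\hat\ph_v$ near $\un0_{\Om_v}$; since we are working only on $D_{\rho(\Om)}$, where each $\sL_v$ is a genuine local isomorphism onto a neighborhood of the base point, this reduces to the elementary statement that $\ga_{s_v\xi}$ is a reparametrization of the relevant subarc of $\ga_{s_{u_j}\xi}$, but it should be spelled out. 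Everything else is an application of Fubini's theorem for the push-forward of currents, which is legitimate because all the germs involved are holomorphic on $D_{\rho(\Om)}$ and the domain of integration is compact.
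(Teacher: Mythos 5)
Your proposal is correct and matches the paper's own argument: both proceed by induction from the leaves up, decomposing the simplex $\De_{T}$ (equivalently $\De_{T_v}$) as a fibration over the root-level simplex $\De_\ell$ with fibers given by rescaled copies of the sub-simplices $\De_{T_u}$, then recognizing the root-level integral as the multiple convolution $\hat f_v * \prod^*_{u\in V_v^1}\hat\psi_{T_u}$. The bookkeeping concerns you flag (orientation normalization and the compatibility of $\sL_v$ across scaled rays) are exactly the points the paper leaves implicit, and your reasoning for why they work out on $D_{\rho(\Om)}$ is sound.
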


\begin{proof}
%
%
We prove \eqref{3.4}
by induction.
We first note that
$\sD(\xi)_\# [\De_{T}](\be_{T})$
is regarded as
$
(\fp_\Om^*\hat{\varphi}_{\hat v})(\un\xi{}_{\hat v})
\hat{f}_{\hat v}(\xi_{\hat v})\big|_{\un\xi{}_{\hat v}=\sL(\xi)}
=
\hat{\varphi}_{\hat v}(\xi)
\hat{f}_{\hat v}(\xi)
$
when $T\in\sT_1$,
and hence, the equality
\eqref{3.4} holds for the diagram $T_v$ $(v\in L)$.
Next, take $v\in V$ and
assume that \eqref{3.4} holds for all the branches
$T_u$ $\big(u\in V_v^\circ\big)$.
From the definition of the iterated convolution
\eqref{3.1},
we have the following representation of $\hat\psi_{T_v}$:
$$
\hat\psi_{T_v}(\xi)
=
\hat{\varphi}_v(\xi)
\int_{\De_{\ell}}
\hat{f}_v\big(\xi \big(1-\sum_{u\in V_v^1}s_u\big)\big)
\prod_{u\in V_v^1}
\hat{\psi}_{T_u}(\xi s_u)
\bigwedge_{ u\in V_v^1}\xi\dd s_{u},
$$
where 
$\ell=|V_v^1|$ and $\De_\ell$ is
the $\ell$-dimensional simplex defined by
$$
\De_\ell
:=
\big\{(s_u)_{u\in V_v^1}\in\Rp^{\ell}
\,\big|\,
\sum_{u\in V_v^1} s_u\leq1
\big\}.
$$
Since
$
\Delta_{T_v}
$
is rewritten as
$$
\left\{(s_{\ti v})_{\ti v\in V_v}
\ \left|\ 
\begin{aligned}
&
s_{\ti v}=s_{ u}\ti s_{\ti v},
(\ti s_{\ti v})\in \De_{T_u},
(s_{u})\in \De_\ell
\\
&
\ \text{for}\ 
\ti v\in V_u
\ \text{and}\ 
u\in V_v^1,
s_v=1
\end{aligned}
\right.
\right\},
$$
we obtain \eqref{3.4} for the diagram $T_v$
from the induction hypothesis.
It proves \eqref{3.4} for $T=T_{\hat v}$.
\end{proof}

We now state one of our main theorems:
\begin{thm}\label{thm:3.9}
Let $\Om$ be a \dfs\ and let $\de,L>0$ be reals 
such that $2\de<\rho(\Om)$.
Then, there exist $c,\de'>0$ such that,
for every
$T=(V,E)\in\mathscr{T}_k$ $(k\geq1)$,
$\{\hat{f}_v\}_{v\in V}\subset\hat\sR_{\O}$
and
$\hat{\varphi}_v\in\hat\sR_{\Om_v}$ $(v\in V)$,
the iterated convolution $\hat{\psi}_{T}$ of 
$\big(\,T\,;\{\hat{f}_v\}_{v\in V},\{\hat{\varphi}_v\}_{v\in V}\big)$
is $\Om_{\hat{v}}$-continuable
and satisfies the following estimates:
\begin{equation}\label{3.5}
\big\|
\hat\psi_T
\big\|_{\Om_{\hat{v}}}^{\de,L}
\leq
\frac{c ^{k-1}}{(k-1)!}
\sup_{\vec{s}
\in\Delta_T
}
\prod_{v\in V}
\big\|\hat{\varphi}_v\big\|_{\Om_v}^{\de',s_vL}
\big\|\hat{f}_v\big\|_{\O}^{\de',s_vL}.
\end{equation}
\end{thm}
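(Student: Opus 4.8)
The plan is to prove the estimate \eqref{3.5} by induction on the number of vertices $k=|V|$, using the representation of $\hat\psi_T$ by the push-forward integral in Proposition \ref{prp:3.7} together with the convolution estimates from \cite{KS} that underlie the Fr\'echet structure on $\hat\sR_\Om$. First I would set up the base case $k=1$: here $T\in\sT_1$ consists of a single vertex $\hat v=v$ which is also a leaf, $w_v=1$, $\Om_v=\Om$, and by the remark following Proposition \ref{prp:3.7} we have $\hat\psi_T=\hat\varphi_v\cdot\hat f_v$, so $\|\hat\psi_T\|_\Om^{\de,L}\le\|\hat\varphi_v\|_\Om^{\de,L}\|\hat f_v\|_\O^{\de,L}$ after passing to $\de'\le\de$ (note $\hat f_v\in\hat\sR_\O$ so $\|\hat f_v\|_\O^{\de,L}$ makes sense and multiplication is continuous since both germs extend holomorphically to the relevant compacts; here $c^0/0!=1$). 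This fixes the constants $c$ and $\de'$ that must work uniformly in $T,k$; crucially they depend only on $\Om,\de,L$, not on the diagram.

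For the inductive step, take $T=(V,E)\in\sT_k$ with root $\hat v$, let $V_{\hat v}^1=\{u_1,\dots,u_\ell\}$ be the vertices feeding into the root, and apply the recursion \eqref{3.1}: $\hat\psi_T=\hat\varphi_{\hat v}\cdot\bigl(\hat f_{\hat v}*\prod^{*}_{i}\hat\psi_{T_{u_i}}\bigr)$. Each branch $T_{u_i}$ is itself an iteration diagram with root $u_i$ and $|V_{u_i}|=k_i$ vertices, $k_1+\cdots+k_\ell=k-1$, and by construction $\Om_{u_i}=\Om^{*w_{u_i}}$ with $\sum_i w_{u_i}=w_{\hat v}$, so that $\Om_{u_1}*\cdots*\Om_{u_\ell}*\O=\Om_{\hat v}$ as d.f.s. (this is exactly the content of the weight recursion and of Definition \ref{dfn:2.1}(2)). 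The induction hypothesis gives, for each $i$,
\[
\bigl\|\hat\psi_{T_{u_i}}\bigr\|_{\Om_{u_i}}^{\de,M}
\le\frac{c^{k_i-1}}{(k_i-1)!}\sup_{\vec s\in\Delta_{T_{u_i}}}\prod_{v\in V_{u_i}}\|\hat\varphi_v\|_{\Om_v}^{\de',s_vM}\|\hat f_v\|_{\O}^{\de',s_vM}
\]
for every $M>0$. Now I invoke the $\cite{KS}$ estimate for the convolution product of finitely many $\Om$-continuable functions (the quantitative statement behind the continuity of $*$ on the Fr\'echet spaces $\hat\sR_{\bullet}$): convolving $\hat f_{\hat v}\in\hat\sR_\O$ with $\hat\psi_{T_{u_1}},\dots,\hat\psi_{T_{u_\ell}}$ produces an $\Om_{\hat v}$-continuable germ whose $\|\cdot\|_{\Om_{\hat v}}^{\de,L}$-norm is bounded by a constant (depending only on $\de,L,\Om$) times an integral over the scaling simplex $\Delta_\ell$ of the product of the branch norms at the scaled radii $s_{u_i}L$; multiplying by $\hat\varphi_{\hat v}$ costs a factor $\|\hat\varphi_{\hat v}\|_{\Om_{\hat v}}^{\de,L}$. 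The simplex $\Delta_T$ decomposes, as in the proof of Proposition \ref{prp:3.7}, as the fibered product of the $\Delta_{T_{u_i}}$ over $\Delta_\ell$ via $s_{\ti v}=s_{u_i}\ti s_{\ti v}$; substituting the induction bounds under this decomposition and estimating the $\Delta_\ell$-integral by $1/\ell!$ times the sup, one collects a factor $\frac{1}{\ell!}\prod_i\frac{1}{(k_i-1)!}$. The combinatorial identity $\binom{k-1}{k_1,\dots,k_\ell}\cdot\frac{\ell!\,}{?}$ — more precisely, $\frac{1}{\ell!}\prod_i\frac{1}{(k_i-1)!}\le\frac{C^{\,\ell}}{(k-1)!}$ with $C$ absolute, after absorbing the multinomial coefficient — together with the per-branch constant $c$ raised to $\sum_i(k_i-1)=k-1-\ell$ and one extra $c$ from the convolution step and one from the product step, yields $c^{k-1}/(k-1)!$ provided $c$ was chosen large enough at the outset; the $\de'$ shrinkage is done once and for all so that the nested scalings $s_{u_i}L$, $\ti s_{\ti v}s_{u_i}L$ remain controlled, using $2\de<\rho(\Om)$ to guarantee the lifts $\sL_v$ are defined on a fixed disc.

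The main obstacle is the bookkeeping in this induction: one must choose $c$ and $\de'$ \emph{before} the induction so that a single pair works for all $T$ and all $k$ simultaneously, which requires the per-step losses (the convolution constant, the product constant, and the combinatorial factor from splitting $\Delta_T$) to be uniformly boundable independent of the arity $\ell$ of the root and of how the $k-1$ remaining vertices are distributed among the branches. The delicate point is therefore the $\ell$-dependence: a naive bound on the $\Delta_\ell$-integral and on the convolution of $\ell$ functions could grow with $\ell$, so I would use the precise form of the $\cite{KS}$ multi-convolution estimate (whose constant is controlled because the relevant d.f.s.\ is exactly $\Om_{u_1}*\cdots*\Om_{u_\ell}=\Om_{\hat v}$, not a larger one) and the fact that $\ell\le w_{\hat v}\le$ (number of leaves) $\le k$, combined with $1/\ell!$ decay, to see that the total $\ell$-dependent factor stays bounded by an absolute constant to the power $k$. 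Everything else — the holomorphy of $\hat\psi_T$ along $\Om_{\hat v}$-allowed paths, i.e.\ its $\Om_{\hat v}$-continuability — follows from the same recursion because $\hat f_{\hat v}\in\hat\sR_\O$ is entire, $\hat\varphi_{\hat v}\in\hat\sR_{\Om_{\hat v}}$, and convolution of $\hat\sR_{\Om_{u_i}}$-functions lands in $\hat\sR_{\Om_{u_1}*\cdots*\Om_{u_\ell}}=\hat\sR_{\Om_{\hat v}}$ by the endless-continuability results of \cite{KS} recalled in Section \ref{sec:2}.
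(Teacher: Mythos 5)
Your proposal takes a genuinely different route from the paper: you induct on the number of vertices $k$, peeling off the root and invoking a multi-convolution estimate at each step, whereas the paper never does a vertex-by-vertex induction on the estimate itself. Instead it proves Theorem~\ref{thm:4.6}, which constructs a single global $(\ga,T)$-adapted deformation $(\Psi_t)_{t\in[a,1]}$ of the whole simplex $\De_T$ inside $X_\Om^T$ at once (Sections~\ref{sec:5}--\ref{sec:6}), and then gets \eqref{3.5} from the push-forward formula of Proposition~\ref{prp:4.3} combined with the Jacobian bound \eqref{4.6} and the compact-set containment \eqref{4.5}.

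The inductive scheme has a fatal gap in the uniformity of $\de'$ in $k$. When you invoke the multi-convolution estimate at the root, the bound on $\|\hat\psi_T\|_{\Om_{\hat v}}^{\de,L}$ involves branch norms $\|\hat\psi_{T_{u_i}}\|_{\Om_{u_i}}^{\de'',s_{u_i}L}$ with a strictly smaller $\de''<\de$; to continue you must apply the induction hypothesis with outer parameter $\de''$, which returns an inner parameter $\de'(\de'')<\de'(\de)$, and so on down the tree. The loss accumulates at every level, so the final $\de'$ depends on the depth of $T$, which can be as large as $k$; since the one-step loss is exponential (compare $\de'(t)=\rho e^{-2\sqrt2\,\de^{-1}L_a(\ga_{|t})}$ in \eqref{4.7}), the cumulative loss is a tower of exponentials in $k$. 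This contradicts the statement, which asserts one $\de'$ depending only on $\Om,\de,L$. Avoiding exactly this accumulation is the point of the global construction: the ODE system \eqref{5.4} couples all vertices simultaneously, and Lemma~\ref{lmm:6.1} controls $\dist(\ze_v^t,\ov\cS_{\Om_v})$ uniformly over $v\in V$ via the telescoping chain \eqref{6.2}, whose endpoint bound $|\ti\ga'|/\de$ is independent of the depth of $v$ in $T$. You flag this concern (``the $\de'$ shrinkage is done once and for all'') but offer no mechanism to achieve it within the induction, and I do not see one.

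A secondary but also real problem is the combinatorics. The inequality you invoke, $\frac{1}{\ell!}\prod_i\frac{1}{(k_i-1)!}\le\frac{C^\ell}{(k-1)!}$ with $C$ absolute, is false: take $\ell=1$, $k_1=k-1$, which would require $k-1\le C$ for all $k$. The correct mechanism is not a multinomial inequality but the Dirichlet integral over the root-level simplex: $\int_{\De_\ell}\prod_i s_{u_i}^{k_i-1}\,d\vec s=\frac{\prod_i(k_i-1)!}{(k-1)!}$, which cancels the branch factorials exactly with no stray $\ell!$. But exploiting this requires carrying the $\De_\ell$-integral inside the norm estimate rather than passing to a supremum at each level -- which is again what the paper's single push-forward integral over $\De_T$ accomplishes in one stroke.
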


Since $\Om\subset\Om_v$ for all $v\in V$,
we obtain the following
\begin{crl}\label{crl:3.10}
Under the same assumptions
with Theorem \ref{thm:3.9},
there exist $c,\de'>0$ such that,
for every
$T=(V,E)\in\mathscr{T}_k$ $(k\geq1)$,
$\{\hat{f}_v\}_{v\in V}\subset\hat\sR_{\O}$
and
$\{\hat{\varphi}_v\}_{v\in V}\subset\hat\sR_{\Om}$,
the iterated convolution $\hat{\psi}_{T}$ of 
$\big(\,T\,;\{\hat{f}_v\}_{v\in V},\{\hat{\varphi}_v\}_{v\in V}\big)$
satisfies 
\begin{equation}\label{3.6}
\big\|
\hat\psi_T
\big\|_{\Om_{\hat{v}}}^{\de,L}
\leq
\frac{c ^{k-1}}{(k-1)!}
\prod_{v\in V}
\big\|\hat{\varphi}_v\big\|_{\Om}^{\de',L}
\big\|\hat{f}_v\big\|_{\O}^{\de',L}.
\end{equation}
\end{crl}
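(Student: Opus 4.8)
The plan is to deduce Corollary \ref{crl:3.10} from Theorem \ref{thm:3.9} by a purely monotonicity argument, with no new analytic work. First I would observe that the hypotheses of the corollary are a special case of those of the theorem: if $\{\hat{\varphi}_v\}_{v\in V}\subset\hat\sR_{\Om}$, then since $\Om\subset\Om_v=\Om^{*w_v}$ for every $v\in V$ (because $\Om\subseteq\Om*\Om'$ always holds, and iterating gives $\Om\subseteq\Om^{*n}$ for all $n\ge1$), each $\hat{\varphi}_v$ is in particular $\Om_v$-continuable, i.e.\ $\hat{\varphi}_v\in\hat\sR_{\Om_v}$. Hence Theorem \ref{thm:3.9} applies verbatim with the same constants $c,\de'>0$, yielding the endless continuability of $\hat\psi_T$ on $X_{\Om_{\hat v}}$ and the estimate \eqref{3.5}.

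Next I would bound the right-hand side of \eqref{3.5} by that of \eqref{3.6}. For this I invoke the seminorm monotonicity established at the end of Section \ref{sec:2}: for $\Om\subset\Om'$ one has $\|\hat\ph\|_{\Om'}^{\de,L}\le\|\hat\ph\|_{\Om}^{\de,L}$. Applying this with $\Om'=\Om_v$ gives, for each $v\in V$,
\[
\big\|\hat{\varphi}_v\big\|_{\Om_v}^{\de',s_vL}
\le
\big\|\hat{\varphi}_v\big\|_{\Om}^{\de',s_vL}.
\]
It then remains only to remove the dependence on $\vec s$ inside the supremum. Since $(s_v)_{v\in V}\in\Delta_T$ forces $0\le s_v\le s_{\hat v}=1$ for every $v$ (monotonicity of partial sums along the tree from a leaf to the root), and since the seminorms $\|\cdot\|_{\Om}^{\de',L}$ and $\|\cdot\|_{\O}^{\de',L}$ are nondecreasing in $L$ — this follows from $\cM_\Om^{\de',L_1}\subseteq\cM_\Om^{\de',L_2}$ and hence $K_\Om^{\de',L_1}\subseteq K_\Om^{\de',L_2}$ for $L_1\le L_2$, so the supremum defining the seminorm is taken over a larger compact set — we get $\|\hat{\varphi}_v\|_{\Om}^{\de',s_vL}\le\|\hat{\varphi}_v\|_{\Om}^{\de',L}$ and likewise $\|\hat{f}_v\|_{\O}^{\de',s_vL}\le\|\hat{f}_v\|_{\O}^{\de',L}$. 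Substituting these bounds into \eqref{3.5} and noting that the resulting upper bound no longer depends on $\vec s$, the supremum over $\Delta_T$ is trivial, and \eqref{3.6} follows.

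There is essentially no obstacle here: the content is entirely in Theorem \ref{thm:3.9}, and the corollary is the routine step of trading the sharp $\Om_v$-adapted seminorms and $s_v$-scaled lengths for the cruder, uniform $\Om$-seminorms at full length $L$. The only point requiring a word of care is the nondecreasing dependence of the seminorms on both parameters $\de$ (decreasing) and $L$ (increasing), which I would state explicitly since it is used to collapse the $\vec s$-supremum; but this is immediate from the definitions of $K_\Om^{\de,L}$ and the exhaustion of $X_\Om$ recalled in Section \ref{sec:2}. The constants $c$ and $\de'$ are inherited unchanged from the theorem.
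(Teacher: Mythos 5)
Your argument is correct and is essentially the same as the paper's: the paper's one-line justification ``Since $\Om\subset\Om_v$ for all $v\in V$'' tacitly invokes exactly the two monotonicity properties you spell out, namely $\|\hat\ph\|_{\Om_v}^{\de',s_vL}\le\|\hat\ph\|_{\Om}^{\de',s_vL}$ (from the morphism $X_{\Om_v}\to X_\Om$ discussed at the end of Section~\ref{sec:2}) and the nondecreasing dependence of the seminorms on $L$ together with $s_v\le 1$ on $\De_T$, after which the supremum in \eqref{3.5} collapses. You have simply written out the routine details the paper leaves implicit.
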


The proof of Theorem \ref{thm:3.9}
will be given in Section \ref{sec:4}.

\section{$(\ga,T)$-adapted deformation of 
$\sD\big( \ga(a) \big)$
}\label{sec:4}

In this section,
we introduce the notion of
$(\ga,T)$-adapted deformation of 
$\sD\big( \ga(a) \big)$,
which is a slight generalization of $\ga$-adapted
origin-fixing isotopies in \cite[Def.~5.1]{S3}.
Let $T=(V,E)$ be an iteration diagram
and let $\Om$ be a \dfs\ 
We take $\rho>0$ such that $2\rho<\rho(\Om)$.
We fix a path
$\ga : [0,1] \to \C$
in
$\Pi_{\Om_{\hat v}}^{\de,L}$
with $L>0$ and $\de \in (0,\rho]$
satisfying the following condition:
\begin{eq-text}\label{4.1}
there exists $a\in(0,1)$ such that 
$\ga|_{[a,1]}$ is $C^1$,
$| \ga(a)| = \rho$ and
$\ga(t) = \ga(a)a^{-1}t$ for $t\in[0,a]$.
\end{eq-text}

\begin{nota}
Given $T=(V,E)\in\sT$,
we set 
%
\begin{align*}
\De_{T,v}^0 &:= \big\{ (s_{u})_{u\in V}
\in \De_T
\mid s_v = 0 \big\},
\\
\De_{T,v}^1 &:= \big\{ (s_{u})_{u\in V}
\in \De_T \mid 
\sum_{u\in V^1_v}s_u
= s_v  \big\},
\\
\cN_v^0 &:= \big\{ (\un\xi{}_{u})_{u\in V}
\in X_\Om^T
\mid \un\xi{}_v = \un0{}_{\Om_v} \big\},
\\
\cN_v^1 &:= \big\{ (\un\xi{}_{u})_{u\in V}
\in X_\Om^T \mid 
\sum_{u\in V^1_v}\fp_{\Om_u}(\un\xi{}_u) 
= \fp_{\Om_v}(\un\xi{}_v)  \big\},
%
%
%
\end{align*}
where $\De_{T,v}^0$ and $\cN_v^0$
(resp. $\De_{T,v}^1$ and $\cN_v^1$)
are defined for $v\in V^\circ$ (resp. $v\in V\setminus L$).
\end{nota}

\begin{dfn}\label{dfn:4.2}
We call a family 
of maps 
$
\Psi_t : \De_T \to X_\Om^T
$
$\big(t\in[a,1]\big)$
\emph{$(\ga,T)$-adapted 
deformation of 
$\sD\big( \ga(a) \big)$ in $X_\Om^T$}
%
if it satisfies the following conditions:
\begin{enumerate}

\item
$\Psi_a = \sD\big( \ga(a) \big)$,

\item
the map
$
\big(t,\vec{s}\,\big) \in [a,1] \times \De_T
\mapsto
\Psi_t\big(\vec{s}\,\big) \in X_\Om^T$
is locally Lipschitz,

\item
the $v$-th component $\un\xi{}^t_v$
of $\vec{\un\xi{}^t}:=\Psi_t\big(\vec{s}\,\big)$
depends only on
the variables 
$
s_u
$
$(u\in W_v)$,
where
$$
W_v := \{\hat v\}\cup
\bigcup_{v'}V_{v'}^1
$$
and the union $\bigcup_{v'}$
is taken over all the vertexes
$v'$ on the path from $v_\upa$
to $\hat v$,

\item
$\un\xi{}_{\hat{v}}^t(\vec{s}\,)=\un\ga(t)$
holds for any $t\in[a,1]$ and $\vec{s}\in\De_T$,

\item
$
\Psi_t\big( \De_{T,v}^0
\big) 
\subset \cN_v^0
$
holds for any $t\in [a,1]$
and $v\in V^\circ$,

\item
$
\Psi_t\big( \De_{T,v}^1
\big) 
\subset \cN_v^1
$
holds for any $t\in [a,1]$
and $v\in V\setminus L$.

\end{enumerate}

\end{dfn}

We now show the folowing
\begin{prp}\label{prp:4.3}
Consider
an iterated convolution $\hat{\psi}_{T}$ of 
$\big(\,T\,;\{\hat{f}_v\}_{v\in V},\{\hat{\varphi}_v\}_{v\in V}\big)$
with the data
$T=(V,E)\in\mathscr{T}$,
$\{\hat{f}_v\}_{v\in V}\subset\hat\sR_{\O}$
and
$\hat{\varphi}_v\in\hat\sR_{\Om_v}$ $(v\in V)$
and assume that a $(\ga,T)$-adapted deformation
$(\Psi_t)_{t\in[a,1]}$ of $\sD\big( \ga(a) \big)$
is given.
Then, 
the analytic continuation of
$
\fp_{\Om_{\hat v}}^*\hat \psi_T
$
along $\un\ga$ is written as follows:
$$
(\fp_{\Om_{\hat v}}^*\hat \psi_T)\big( \uga(t) \big) = 
\big( \Psi_t \big)_\# [\De_T](\be_T)
%
\quad\text{for}\quad
t\in[a,1].
$$
\end{prp}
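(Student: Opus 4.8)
The plan is to argue by induction on the structure of the iteration diagram $T$, mirroring the inductive proof of Proposition \ref{prp:3.7}, but now tracking the analytic continuation along $\un\ga$ rather than evaluating at a point $\xi\in D_{\rho(\Om)}$. The base case is $T\in\sT_1$: here $\De_T$ is a point, $\Psi_t$ sends it to $\un\ga(t)$ in $X_{\Om_{\hat v}}$ by condition (4) of Definition \ref{dfn:4.2}, and $\big(\Psi_t\big)_\#[\De_T](\be_T) = (\fp_{\Om_{\hat v}}^*\hat\ph_{\hat v})(\un\ga(t))\,\hat f_{\hat v}(\ga(t))$, which is exactly the analytic continuation of $\fp_{\Om_{\hat v}}^*(\hat\ph_{\hat v}\cdot\hat f_{\hat v})$ along $\un\ga$ since $\hat f_{\hat v}\in\hat\sR_{\O}$ is entire.

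For the inductive step, fix $v$ and suppose the claim holds for each branch $T_u$, $u\in V_v^1$, with its own adapted deformation obtained by restriction. The key point is that the conditions (3), (5), (6) in Definition \ref{dfn:4.2} are precisely engineered so that $\Psi_t$ restricted to $\De_T$ factors compatibly with the simplex decomposition of $\De_{T_v}$ used in the proof of Proposition \ref{prp:3.7}: condition (5) guarantees that the faces $\De_{T,v}^0$ are sent into $\cN_v^0$, where the form $\be_T$ (having a factor $\fp_{\Om_v}^*\hat\ph_v$ vanishing appropriately, or rather where the convolution integral degenerates) behaves correctly, and condition (6) handles the faces $\De_{T,v}^1$ where $\xi_v = \sum_{u\in V_v^1}\xi_u$, i.e. the argument of $\hat f_v$ hits $0$. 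One then writes $\big(\Psi_t\big)_\#[\De_T](\be_T)$ as an iterated push-forward/integration: integrate first over the fibre variables $(s_{\ti v})_{\ti v\in V_u}$ for each $u\in V_v^1$, which by the induction hypothesis produces $(\fp_{\Om_u}^*\hat\psi_{T_u})$ continued along the relevant sub-path, and then integrate the remaining simplex variables $(s_u)_{u\in V_v^1}$, which reconstructs the convolution product $\hat f_v * \prod^*_{u\in V_v^1}\hat\psi_{T_u}$ continued along $\un\ga$, multiplied by $\fp_{\Om_v}^*\hat\ph_v$. This is exactly the defining rule \eqref{3.1} for $\hat\psi_{T_v}$, now at the level of analytic continuations along $\un\ga$.

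The main obstacle, and the step deserving real care, is justifying that the push-forward of the integration current $[\De_T]$ by the merely locally Lipschitz map $\Psi_t$ (condition (2)) genuinely computes the analytic continuation — that is, that deforming the chain $\sD(\ga(a))_\#[\De_T]$ through the family $(\Psi_t)$ does not change the value of the integral of the closed holomorphic form $\be_T$ except through the prescribed motion of the boundary. This is a Stokes-type argument: $\be_T$ is a holomorphic, hence closed, $(k-1)$-form on $X_\Om^T$, and the family $\Psi_t$ traces out a $k$-chain whose boundary consists of $\Psi_1(\De_T) - \Psi_a(\De_T)$ together with contributions from $\Psi_t(\pa\De_T)$; the face conditions (5)--(6), combined with the fact that $\be_T$ restricts to $0$ on $\cN_v^0$ and on $\cN_v^1$ (because on these loci either a differential $\dd\un\xi_v$ becomes dependent or a factor vanishes), kill the lateral boundary terms. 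One must also check that $\Psi_t(\De_T)$ stays within the domain of holomorphy, i.e. inside $X_\Om^T$ with each component landing in $X_{\Om_v}$, which is where $\Om_v = \Om^{*w_v}$ and the allowedness built into the construction of $(\Psi_t)$ (to be carried out in Sections \ref{sec:5}--\ref{sec:6}) enter. Modulo this Stokes/deformation-invariance lemma for currents pushed forward by Lipschitz maps — which is the technical heart — the rest is the bookkeeping of the inductive decomposition already rehearsed in Proposition \ref{prp:3.7}.
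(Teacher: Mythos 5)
Your proof follows essentially the same route as the paper's: induct on the tree, reduce the inductive step to the one-level ("star") configuration by absorbing the branch contributions using the induction hypothesis, and then handle that one-level case via a deformation-invariance argument for the push-forward current. The paper packages the last step explicitly as Lemma~\ref{lmm:4.4} (imported from~\cite{S3}, applied to the collapsed diagram $\wt T_v$ after replacing $\be_{T_v}$ by the $1$-form $\ti\be_{\wt T_v}$), which is precisely the lemma you flag as the ``technical heart'' you are assuming; the only minor imprecision is that the vanishing of the lateral boundary contributions on $\cN_v^0$, $\cN_v^1$ comes from the dependency of the differentials $\dd\un\xi_v$ in the wedge, not from a factor of $\be_T$ vanishing.
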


Notice that
the situation discussed in \cite{S3} and \cite{KS}
is regarded as the case
where the iteration diagram $T\in\sT_k$ satisfies
$V_{\hat v}^1=V^\circ$
with $\hat f_{\hat v}=\hat \ph_{\hat v}=1$.
We first note the following
\begin{lmm}[\cite{S3}]\label{lmm:4.4}
Let $T=(V,E)$ be an iteration diagram
such that $V_{\hat v}^1=V^\circ$
and assume that a $(\ga,T)$-adapted deformation
$(\Psi_t)_{t\in[a,1]}$ of $\sD\big( \ga(a) \big)$ is given.
Then, 
the analytic continuation of
$
\fp_{\Om_{\hat v}}^*\hat \psi_T
$
along $\un\ga$ is written as follows:
$$
(\fp_{\Om_{\hat v}}^*\hat \psi_T)\big( \uga(t) \big) = 
\big( \Psi_t  \big)_\# [\De_T](\be_T)
\quad\text{for}\quad
t\in[a,1].
$$
\end{lmm}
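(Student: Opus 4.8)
By the Remark preceding the statement, the case $\hat f_{\hat v}=\hat{\varphi}_{\hat v}=1$ of the lemma is precisely the analytic-continuation theorem of \cite{S3} (transported into the present $X_\Om$-formalism as in \cite{KS}), so the plan is to reduce the general case to that one. The key observation is that when $V_{\hat v}^1=V^\circ$ every non-root vertex is a leaf, so $\hat\psi_v=\hat{\varphi}_v\hat f_v\in\hat\sR_{\Om}$ for $v\in V^\circ$ (each $\hat f_v$ being entire and $\Om_v=\Om$), and rule \eqref{3.1} gives
$$
\hat\psi_T=\hat{\varphi}_{\hat v}\cdot\Big(\hat f_{\hat v}*\sideset{}{^*}\prod_{v\in V^\circ}\hat\psi_v\Big).
$$
The inner parenthesis is a flat convolution product of the $k$ functions $\hat f_{\hat v},(\hat\psi_v)_{v\in V^\circ}$, all $\Om$-continuable with $\hat f_{\hat v}$ moreover entire, hence exactly the object treated in \cite{S3}; correspondingly $\De_T$ is then the standard $(k-1)$-simplex in the coordinates $(s_v)_{v\in V^\circ}$, and one checks that the conditions of Definition \ref{dfn:4.2} specialize to those defining a $\ga$-adapted origin-fixing isotopy in \cite[Def.~5.1]{S3}: condition (5) at $v\in V^\circ$ is the origin-fixing property, condition (6) at $v=\hat v$ is the collision condition at the endpoint, and condition (4) pins the total mass at $\uga(t)$.

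With this dictionary I would argue in three steps. \emph{Continuability}: by \cite{S3} the flat convolution $\hat f_{\hat v}*\sideset{}{^*}\prod_{v\in V^\circ}\hat\psi_v$ is $\Om^{*(k-1)}$-continuable — convolving $k$ functions, one of them entire, contributes $w_{\hat v}=k-1$ copies of $\Om$ — and multiplying by $\hat{\varphi}_{\hat v}\in\hat\sR_{\Om_{\hat v}}=\hat\sR_{\Om^{*(k-1)}}$, a function continuable along the same paths, preserves this; hence $\hat\psi_T$ is $\Om_{\hat v}$-continuable. \emph{Initial condition}: since $\ga$ is linear on $[0,a]$ with $|\ga(a)|=\rho<\rho(\Om)$, the lift $\uga$ stays in the image $\sL_{\hat v}(D_{\rho(\Om)})$ up to $t=a$, so $\uga(a)=\sL_{\hat v}(\ga(a))$ and, by Proposition \ref{prp:3.7} together with $\Psi_a=\sD(\ga(a))$,
$$
(\fp_{\Om_{\hat v}}^*\hat\psi_T)\big(\uga(a)\big)=\hat\psi_T\big(\ga(a)\big)=\big(\Psi_a\big)_\#[\De_T](\be_T).
$$
\emph{The identity for $t\in[a,1]$}: applying the continuation theorem of \cite{S3} to the flat convolution identifies its continuation along $\uga$ with $(\Psi_t)_\#[\De_T](\be_T')$, where $\be_T'$ is $\be_T$ with the factor $(\fp_{\Om_{\hat v}}^*\hat{\varphi}_{\hat v})(\un\xi{}_{\hat v})$ deleted; since condition (4) forces $\un\xi{}^t_{\hat v}\equiv\uga(t)$ on $\De_T$, that factor equals the constant $(\fp_{\Om_{\hat v}}^*\hat{\varphi}_{\hat v})(\uga(t))$, which pulls out of the push-forward integral, and multiplying it back reproduces $(\fp_{\Om_{\hat v}}^*\hat\psi_T)(\uga(t))=(\Psi_t)_\#[\De_T](\be_T)$.

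The main obstacle is the verification in the first paragraph that an \emph{arbitrary} $(\ga,T)$-adapted deformation — defined abstractly through push-forwards of currents on $X_\Om^T$ — restricts to a bona fide $\ga$-adapted origin-fixing isotopy in the precise sense required by \cite{S3}, and hence that the simplex-integral machinery of \cite{S3} (differentiation under the integral sign along $\ga$, Stokes' theorem, and the telescoping of boundary contributions governed by the origin-fixing and collision conditions) applies verbatim after the root data $\hat{\varphi}_{\hat v}$ and $\hat f_{\hat v}$ are stripped off. If one prefers a self-contained proof instead, the alternative is to redo that Stokes argument here directly: use condition (2) to see that $t\mapsto(\Psi_t)_\#[\De_T](\be_T)$ is locally Lipschitz, compute its almost-everywhere derivative as $\ga'(t)\,(\fp_{\Om_{\hat v}}^*\hat\psi_T)'(\uga(t))$ by Cartan's formula, using conditions (4)--(6) to discard the unwanted boundary terms, and conclude from the initial condition by uniqueness — this differentiation-and-Stokes computation is the technically delicate point on either route.
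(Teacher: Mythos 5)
Your proof is correct and takes essentially the same approach as the paper: reduce to \cite[Prop.~5.4]{S3} by treating the root data as a parameter pinned at $\uga(t)$ via condition~(4) of Definition~\ref{dfn:4.2}. The paper states this very tersely --- regard $\beta_T$ as a holomorphic $(k-1)$-form on $X_\Om^{k-1}$ with parameter $\un\xi{}_{\hat v}$ and restrict the parameter to $\uga(t)$ --- whereas you spell it out by explicitly factoring out $\hat{\varphi}_{\hat v}$ and applying the continuation theorem to the flat convolution $\hat f_{\hat v}*\sideset{}{^*}\prod_{v\in V^\circ}\hat\psi_v$; the obstacle you flag (checking that the $(\ga,T)$-adapted deformation axioms restrict to the $\ga$-adapted origin-fixing isotopy axioms of \cite{S3} in the star case) is exactly what the paper elides under the phrase ``adapting the proof of \cite[Prop.~5.4]{S3}''.
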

Indeed,
in this case,
we can regard $\beta_T$ as a holomorphic
$(k-1)$-form on $X_\Om^{k-1}$
with a holomorphic parameter
$\un\xi{}_{\hat v}\in X_{\Om^{*(k-1)}}$.
Therefore,
adapting the proof of \cite[Prop.~5.4]{S3}
and restricting the parameter $\un\xi{}_{\hat v}$
to $\un\ga(t)$,
we obtain Lemma \ref{lmm:4.4}.

\begin{nota}
Let a $(\ga,T)$-adapted 
deformation $(\Psi_t)_{t\in[a,1]}$ 
of $\sD\big( \ga(a) \big)$ be given
and assume that
$
(s_u)_{u\in V\setminus V_v^{\circ}}
$
is taken so that it satisfies
\begin{equation}\label{4.2}
\big\{
\big((s_u)_{u\in V \setminus V_v},
(s_v\ti s_u)_{u\in V_v}\big)
\mid
(\ti s_u)_{u\in V_v}
\in \De_{T_v}
\big\}
\subset
\Delta_T.
\end{equation}
Let ${\rm pr}_{T_v}:X_\Om^T\to X_\Om^{T_v}$
be the natural projection.
We define a map
$$
\Psi_t|_{T_v}
\big((s_u)_{u\in V\setminus V_v^{\circ}};
\,\cdot\,\big)
:\De_{T_v}
\to X_\Om^{T_v}
$$
by
$$
\Psi_t|_{T_v}((s_u)_{u\in V\setminus V_v^{\circ}};
\vec{s'}\,)
:=
{\rm pr}_{T_v}\circ\Psi_t\big(
(s_u)_{u\in V\setminus V_v},
(s_vs_u)_{u\in V_v}\big)
$$
for 
$
\vec{s'}:=(s_u)_{u\in V_v}
\in\De_{T_v}.
$
We note that the map $\Psi_t|_{T_v}$ depends only
on the parameters 
$s_u$
$
\big(u\in W_v\big),
$
and hence,
we write the map as
$
\Psi_t|_{T_v}((s_u)_{u\in W_v};
\,\cdot\,\big).
$
(See Definition \ref{dfn:4.2}.3.)
\end{nota}
Since the path 
$
\un\ga{}_v((s_u)_{u\in W_v};
\,\cdot\,)
:[0,1]\to X_{\Om_v}
$
obtained by concatenating
the paths 
$
t\in[0,a]\mapsto \sL_v(\ga(a)s_vt/a)
$
and the $v$-th component of
the path
$
t\in[a,1]\mapsto
\Psi_t|_{T_v}((s_u)_{u\in W_v};
\vec{s'}\,)
$
is independent of the choice of
$
(s'_u)_{u\in V_v^\circ},
$
we find that,
for fixed $(s_u)_{u\in W_v}$,
\begin{eq-text}\label{4.3}
$
\big(\Psi_t|_{T_v}\big)_{t\in[a,1]}
$
defines a $(\ga_v,T_v)$-adapted 
deformation of $\sD\big( \ga_v(a) \big)$.
\end{eq-text}

\begin{proof}[Proof of Proposition \ref{prp:4.3}]
We prove the following for every $v\in V$
by the induction used in the proof of
Proposition \ref{prp:3.7}:
\begin{equation}\label{4.4}
(\fp_{\Om_{v}}^*\hat \psi_{T_v})\big( \un\ga{}_v(t) \big) = 
\big( \Psi_t|_{T_v}\big)_\# [\De_{T_v}](\be_{T_v})
\end{equation}
holds for every $t\in[a,1]$ and 
$(s_u)_{u\in W_v}$
satisfying the condition \eqref{4.2}.
For the case $v\in L$,
the equation \eqref{4.4} follows from the definition
of $\ga_v$.

Now, assume that
the equation \eqref{4.4} holds for
all the vertexes $u\in V^\circ_v$.
Let $\wt T_v$ be an iteration diagram
defined by the vertexes 
$\wt V_v:=V_v^1\cup \{v\}$
and the edges of the form $u\to v$ $(u\in V_v^1)$.
We set 
$
\Psi_t|_{\wt T_v}
:=
{\rm pr}_{\wt T_v}\circ
\Psi_t|_{T_v},
$
where
$
{\rm pr}_{\wt T_v}:X_\Om^{T_v}\to 
X_\Om^{\wt T_v}
:=
\prod_{u\in \wt V_v} X_{\Om_{u}}
$
is the natural projection.
Since 
$\Psi_t|_{\wt T_v}((s_u)_{u\in W_v};
\vec{s'}\,)$ 
depends only on
the variables $s_u$
$
\big(u\in \wt V_v\big),
$
it defines a map
$
\Psi_t|_{\wt T_v}
:\De_{\wt T_v}
\to X_\Om^{\wt T_v}.
$
We then obtain 
$$
\big( \Psi_t|_{T_v}\big)_\# [\De_{T_v}](\be_{T_v})
=
\big( \Psi_t|_{\wt T_v} \big)_\# [\De_{\wt T_v}](\ti\be_{\wt T_v})
$$
from the induction hypothesis,
where $\ti\be_{\wt T_v}$ is
a holomorphic $1$-form defined by
$$
\ti\be_{\wt T_v}:=
(\fp_{\Om_{v}}^*\hat{\varphi}_v)(\un\xi{}_v)
\hat{f}_v\big(\xi_v -\sum_{u\in V_v^1}\xi_u\big)
\prod_{u\in V_v^1}
(\fp_{\Om_{u}}^*\hat{\psi}_{T_u})(\un\xi{}_u)
\bigwedge_{ u\in V_v^1}\dd \un\xi{}_{u}.
$$
Applying Lemma \ref{lmm:4.4},
we have
\begin{align*}
\big( \Psi_t|_{\wt T_v} \big)_\# [\De_{\wt T_v}](\ti\be_{\wt T_v})
&=
\fp_{\Om_{ v}}^*
\Big(
\hat{\varphi}_v\cdot
\Big(\hat{f}_v*
\sideset{}{^*}\prod_{u\in V_v^1}
\hat{\psi}_{T_u}
\Big)\Big)
\big( \un\ga{}_v(t) \big)
\\
&=
(\fp_{\Om_{v}}^*\hat \psi_{T_v})\big( \un\ga{}_v(t) \big).
\end{align*}
It proves \eqref{4.4} for all $v\in V$.
Since $\ga_{\hat v}=\ga$ and $T_{\hat v}=T$,
we obtain Proposition \ref{prp:4.3}.
\end{proof}

The proof of Theorem \ref{thm:3.9}
is reduced to the following
\begin{thm}\label{thm:4.6}
Let $T=(V,E)$ be an iteration diagram in $\sT_k$
$(k\geq 1)$ and assume that 
$\ga\in\Pi_{\Om_{\hat{v}}}^{\de,L}$
for $L>0$ and $\de \in (0,\rho]$
satisfying the condition \eqref{4.1} is given.
Then, there exists a $(\ga,T)$-adapted deformation
$(\Psi_t)_{t\in[a,1]}$ of $\sD\big( \ga(a) \big)$
such that
\begin{equation}\label{4.5}
\Psi_{t} (\De_T)
\subset
\bigcup_{
\vec{s}
\in\Delta_T
}
\prod_{v\in V}
K_{\Omega_v}^{\delta'(t),s_{v}L(\ga|_t)}.
\end{equation}
Further,
the partial derivatives 
$\pa\xi_v^t/\pa s_u$
are defined almost everywhere on $\De_T$
and
satisfy
\begin{equation}\label{4.6}
\Big|
\Det\Big[ \frac{\pa\xi_v^t}{\pa s_u}\Big]_{u,v\in V^\circ}
\Big|
\leq
\big(c(t)\big)^{k-1},
\end{equation}
where
\begin{equation}\label{4.7}
\delta'(t)
:= \rho\, e^{-2\sqrt{2}\delta^{-1}L_a(\ga_{|t})},
\qquad
c(t) :=
\rho \, e^{3\de^{-1} L_a(\ga_{|t})
},
\end{equation}
$$
L_a(\ga_{|t})
=\int_a^t|\gamma'(t')|dt'.
$$
\end{thm}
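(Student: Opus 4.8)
The plan is to construct the deformation $(\Psi_t)_{t\in[a,1]}$ recursively over the tree $T$, pushing the construction of \cite{S3} through the branch structure. The natural guiding idea is: for the path $\ga$ fixed at the root component, choose for each vertex $v$ a \emph{symmetrically contractible} family of paths so that the $v$-th component is the lift of a suitably rescaled/deformed copy of $\ga$, while respecting the weight $w_v$ (the $v$-th factor lives on $X_{\Om_v}$ with $\Om_v=\Om^{*w_v}$). Concretely, I would first treat the "flat" case $V_{\hat v}^1 = V^\circ$, which is exactly the situation handled in \cite[\S5]{S3} (and in \cite{KS} with a filtered set): there one builds a $\ga$-adapted origin-fixing isotopy of the standard simplex whose components stay, at time $t$, inside $K_{\Om_v}^{\delta'(t), s_v L(\ga_{|t})}$ and whose Jacobian is controlled by $c(t)^{k-1}$. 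The key point is that Definition \ref{dfn:4.2}.3 (the $v$-th component depends only on $s_u$, $u\in W_v$) lets me glue branch deformations: having fixed the parameters $(s_u)_{u\in W_v}$, the restriction $\Psi_t|_{T_v}$ is, by \eqref{4.3}, a $(\ga_v,T_v)$-adapted deformation of $\sD(\ga_v(a))$, so I can recurse on the branches and then assemble.

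The heart of the construction is therefore the recursive step. Suppose $(\Psi_t|_{T_u})$ has been built for every $u\in V_v^1$, each satisfying the analogue of \eqref{4.5}--\eqref{4.6} on its own simplex with the parameters $s_{u'}$, $u'\in W_u$. At the vertex $v$ I must (i) produce the $v$-th component $\un\xi_v^t$ as a lift to $X_{\Om_v}$ of a path obtained by the $\ga$-adapted deformation procedure of \cite{S3} applied with base point $\ga_v(a)$ — here the combined filtered set is $\Om_v$ precisely because the convolution $\hat f_v * \prod_{u\in V_v^1}^*\hat\psi_{T_u}$ lives on $X_{\Om_v}$ when the factors live on $X_{\O}$ and $X_{\Om_u}$ with $\sum w_u = w_v$; (ii) impose the boundary conditions $\Psi_t(\De_{T,v}^0)\subset\cN_v^0$ and $\Psi_t(\De_{T,v}^1)\subset\cN_v^1$, which are exactly the compatibility conditions at the "origin face" $s_v=0$ and at the "total face" $\sum_{u\in V_v^1}s_u=s_v$ of the simplex, so that the deformation does not move the already-deformed sub-configuration when its total parameter degenerates. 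Condition \eqref{4.1} on $\ga$ (radial on $[0,a]$ with $|\ga(a)|=\rho$) provides the base of the induction: at $t=a$, $\Psi_a=\sD(\ga(a))$, and every component lies in $\sL_v(D_{\rho(\Om)})$, hence in $K_{\Om_v}^{\delta'(a),s_v L(\ga_{|a})}$ since $\delta'(a)=\rho$ and $L_a(\ga_{|a})=0$.

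For the estimates: the distance bound $\delta'(t)=\rho\,e^{-2\sqrt2\,\delta^{-1}L_a(\ga_{|t})}$ comes from the standard Gronwall-type control on how far the symmetrically contractible path family can approach $\ov\cS_{\Om_v}$; since $\ga$ itself stays at distance $\ge\delta$ from $\ov\cS_{\Om_{\hat v}}$ in $\R\times\C$, each component path, built by rescaling and $\ga$-adapted deformation, stays at distance $\ge\rho\,e^{-2\sqrt2\delta^{-1}L_a(\ga_{|t})}$ from $\ov\cS_{\Om_v}$, and its length at "time-slice" $s_v$ is at most $s_v L(\ga_{|t})$ by homogeneity in $s_v$; this gives \eqref{4.5}. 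The Jacobian estimate \eqref{4.6} is obtained by writing $\Det[\pa\xi_v^t/\pa s_u]_{u,v\in V^\circ}$ in block-triangular form along the tree order (the component at $v$ depends only on $s_u$, $u\in W_v$, so ordering vertices by distance to the root $\hat v$ makes the matrix triangular up to the simplex rescalings $s_v \ti s_u$), and then multiplying the per-vertex bounds $c(t)=\rho\,e^{3\delta^{-1}L_a(\ga_{|t})}$ coming from the Lipschitz constant of the $\ga$-adapted deformation of \cite{S3}; the product over the $k-1$ non-root vertices yields $c(t)^{k-1}$, with the combinatorial $(k-1)!$ absorbed into the orientation normalization of $[\De_T]$.

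The main obstacle I expect is the gluing along the faces $\De_{T,v}^0$ and $\De_{T,v}^1$: one must choose the deformations of the branches $T_u$ ($u\in V_v^1$) and of the vertex $v$ \emph{simultaneously compatibly}, so that on the face $\sum_{u\in V_v^1}s_u=s_v$ the pushed-forward configuration actually lands in $\cN_v^1$ (the convolution-constraint locus) rather than merely near it, and on $s_v=0$ collapses correctly to $\cN_v^0$ without destroying the distance and Jacobian bounds. This is the analogue of the "compatibility of symmetrically contractible paths under convolution" in \cite{S3,KS}, and handling it cleanly for an arbitrary iteration diagram — rather than the single-convolution case — is where the real work lies; the construction in Section \ref{sec:5} and the estimates in Section \ref{sec:6} are devoted to exactly this.
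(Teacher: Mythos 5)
Your sketch identifies the right ingredients — recursion over the tree structure, a Gronwall-type argument for the distance bound, and a block-triangular factorization of the Jacobian coming from Definition~\ref{dfn:4.2}.3 — and it is honest about deferring the "real work" to Sections~\ref{sec:5}--\ref{sec:6}. But as written the recursion runs in the wrong direction, and the one piece you would actually need to supply, the concrete evolution law, is missing.

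You describe the inductive step as: "Suppose $(\Psi_t|_{T_u})$ has been built for every $u\in V_v^1$ \dots\ at the vertex $v$ I must (i) produce the $v$-th component." This is bottom-up, leaves to root. That order cannot work here, because the branch deformations are not autonomous: the paper constructs $\ze_u^t$ for $u\in V_v^1$ as the solution of the ODE (5.4.$u$),
\[
\frac{d\ze_u}{dt}
= \frac{\eta_u(\ze_u)}{D_v\bigl(\ze_v^t,(\ze_{u'})_{u'\in V_v^1}\bigr)}\,
\frac{\pa\ze_v^t}{\pa t},
\]
whose right-hand side is driven by the already-constructed motion $\ze_v^t$ of the parent. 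Thus the construction is intrinsically top-down: one sets $\ze_{\hat v}^t=\ti\ga(t)$ and then propagates downward to the leaves, with the crucial features (the $\cN_v^0$, $\cN_v^1$ invariances of Lemma~\ref{lmm:5.4}, the chain of inequalities \eqref{6.2} that yields $\delta'(t)$, and the per-vertex Lipschitz/Gronwall bound \eqref{6.4}--\eqref{6.5} that yields $c(t)$) all flowing from the structure of this ODE. The bottom-up recursion you propose faces a circularity: $\Psi_t|_{T_u}$ depends, through $W_u$, on the parameters and the motion at $v$ and above it, so it cannot be "built first." What \eqref{4.3} asserts is that \emph{given} the global $\Psi_t$, its restriction to a branch is adapted — it is a consequence, not an assembly mechanism.

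A second, smaller point: the factor $(k-1)!$ does not appear in Theorem~\ref{thm:4.6} and is not "absorbed into the orientation normalization" at this stage; the Jacobian bound \eqref{4.6} is plainly $c(t)^{k-1}$, obtained from \eqref{6.6}--\eqref{6.7} by multiplying one factor $c(t)^{|V_v^1|}$ per internal vertex $v$ and using $\sum_{v\in V\setminus L}|V_v^1|=k-1$. The $1/(k-1)!$ enters later, in the passage from Theorem~\ref{thm:4.6} to Theorem~\ref{thm:3.9}, via the volume of $\De_T$. Finally, your remark that the relevant filtered set at $v$ is $\Om_v=\Om^{*w_v}$ is correct and matches Lemma~\ref{lmm:5.2}, which is exactly what makes the denominator $D_v$ stay bounded below and the ODE well-posed.
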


See \cite{KS} for the reduction of Theorem \ref{thm:3.9}
to Theorem \ref{thm:4.6}.
The proof of Theorem \ref{thm:4.6}
will be given in Section \ref{sec:5}
and \ref{sec:6}.

\section{Construction of a
$(\ga,T)$-adapted deformation}\label{sec:5}

In this section,
we construct 
a $(\ga,T)$-adapted deformation
of $\sD\big( \ga(a) \big)$ 
satisfying the conditions in Theorem \ref{thm:4.6}
by the method developed in
\cite{S2}, \cite{S3}, \cite{OD} and \cite{KS}.
Let us assume that
$T=(V,E)\in\sT_k$, a \dfs\ $\Om$ and
$\ga\in\Pi_{\Om_{\hat v}}^{\de,L}$ 
satisfying the assumptions in Theorem \ref{thm:4.6}
are given.
\begin{nota}
We define functions
$\eta_v$ and $D_v$ $(v\in V)$ by
$$
\zeta = (\la,\xi) \in \Rp\times\C \mapsto 
\eta_v(\zeta) :=
\dist\big( (\la,\xi), \{ (0,0) \} \cup \ov\cS_{\Om_v} \big),
$$
$$
\big(\ze_v,(\zeta_u)_{u\in V_v^1}\big)
\in (\Rp\times\C)^{|V_v^1|+1}
\mapsto 
D_v\big(\ze_v,(\zeta_u)_{u\in V_v^1}\big):=
\sum_{u\in V^1_v}\eta_u(\zeta_u) +
\Big| \zeta_v- \sum_{u\in V_v^1}\zeta_u \Big|,
$$
where $\dist(\cdot,\cdot)$ is the Euclidean distance in 
$\R\times\C\simeq \R^3$.
\end{nota}
By the definition of $\eta_v$,
we find that $\eta_v$ is Lipschitz continuous on
$\Rp\times\C$.
More precisely, we have the following:
\begin{equation*}
|\eta_v(\ze)-\eta_v(\ze')|
\leq|\ze-\ze'|
\quad
\text{holds for every}
\quad
\ze,\ze'
\in\Rp\times\C.
\end{equation*}
We then see that
the following holds for every
$\ze_v$, $\ze'_v$ 
and $\ze_u$, $\ze'_u$ $(u\in V_v^1)$
in $\Rp\times\C$:
\begin{equation*}
|D_v\big(\ze_v,(\zeta_u)_{u\in V_v^1}\big)
-D_v\big(\ze'_v,(\zeta'_u)_{u\in V_v^1}\big)|
\leq
|\ze_v-\ze'_v|+
2\sum_{u\in V^1_v}|\ze_u-\ze'_u|.
\end{equation*}
\begin{lmm}\label{lmm:5.2}
The functions $\eta_v$ and $D_v$ satisfy
the following inequality for every
$\ze_v$ and $\ze_u$ $(u\in V_v^1)$
in $\Rp\times\C$:
\begin{equation}\label{5.1}
D_v\big(\ze_v,(\zeta_u)_{u\in V_v^1}\big)
\geq
\eta_v(\ze_v).
\end{equation}
\end{lmm}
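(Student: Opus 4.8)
The plan is to bound $D_v$ below by analyzing the two summands that compose it: the distances $\eta_u(\zeta_u)$ for $u\in V_v^1$ and the "transport term" $\bigl|\zeta_v-\sum_{u\in V_v^1}\zeta_u\bigr|$. The key observation is that the definition $\Om_v=\Om^{*w_v}$ together with the relation $w_v=\sum_{u\in V_v^1}w_u$ (for $v\notin L$) forces a compatibility between $\ov\cS_{\Om_v}$ and the sets $\ov\cS_{\Om_u}$ governing the incoming vertices: by the very formula for the sum of d.f.s.\ in Definition \ref{dfn:2.1}, if $\om_u\in(\Om_u)_{\la_u}$ then $\sum_u\om_u\in(\Om_v)_{\sum_u\la_u}$, i.e.\ sums of singular points of the $\Om_u$'s (at levels adding up appropriately) are singular points of $\Om_v$.

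First I would unwind $\eta_v(\ze_v)=\dist\bigl(\ze_v,\{(0,0)\}\cup\ov\cS_{\Om_v}\bigr)$ and split into cases according to whether the nearest point realizing this distance is the origin or a genuine singular point of $\Om_v$. Write $\ze_v=(\la_v,\xi_v)$ and, for each $u\in V_v^1$, let $\ze_u=(\la_u,\xi_u)$ with a nearest point $p_u\in\{(0,0)\}\cup\ov\cS_{\Om_u}$ realizing $\eta_u(\zeta_u)=|\ze_u-p_u|$. The idea is that $\sum_u p_u$ (suitably interpreted, with the convention that $(0,0)$ contributes nothing) lies in $\{(0,0)\}\cup\ov\cS_{\Om_v}$, using the additive-closure property recorded above and passing to closures. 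Hence
\begin{align*}
\eta_v(\ze_v)
&\leq \Bigl|\ze_v-\textstyle\sum_{u\in V_v^1}p_u\Bigr|
\leq \Bigl|\ze_v-\textstyle\sum_{u\in V_v^1}\ze_u\Bigr|
+\sum_{u\in V_v^1}|\ze_u-p_u|\\
&= \Bigl|\ze_v-\textstyle\sum_{u\in V_v^1}\ze_u\Bigr|
+\sum_{u\in V_v^1}\eta_u(\zeta_u)
= D_v\bigl(\ze_v,(\zeta_u)_{u\in V_v^1}\bigr),
\end{align*}
which is exactly \eqref{5.1}. The degenerate case $v\in L$ (so $V_v^1=\O$) is immediate since then $D_v(\ze_v)=|\ze_v|\geq\dist(\ze_v,\{(0,0)\})\geq\eta_v(\ze_v)$.

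The step I expect to be the main obstacle is making the claim "$\sum_{u}p_u\in\{(0,0)\}\cup\ov\cS_{\Om_v}$" fully rigorous, for two reasons. First, one must handle the mixture of cases in which some $p_u$ equal the origin and some are honest singular points — here one should note that dropping a zero summand only replaces a sum over $V_v^1$ by a sum over a subset, and one needs the d.f.s.\ sum to dominate such partial sums as well; this is where the extra $\cup\,\Om_L\cup\Om'_L$ clause in the definition of $\Om*\Om'$, iterated, does the work, together with $w_v=\sum_u w_u$ to keep the total "level" bookkeeping consistent. Second, since $\eta_v$ is defined using the \emph{closure} $\ov\cS_{\Om_v}$, the points $p_u$ may lie in $\ov\cS_{\Om_u}\setminus\cS_{\Om_u}$, so I would first prove the additivity statement for points of $\cS_{\Om_u}$ directly from the combinatorial definition, then extend to the closures by a routine limiting argument (continuity of addition on $\Rp\times\C$). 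Once that lemma is in place, the displayed chain of triangle inequalities above closes the proof with no further analytic input.
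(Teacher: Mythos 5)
Your proof is correct and takes essentially the same approach as the paper's: choose nearest points $\ti\ze_u\in\{(0,0)\}\cup\ov\cS_{\Om_u}$ realizing each $\eta_u(\ze_u)$, observe that $\sum_u\ti\ze_u\in\{(0,0)\}\cup\ov\cS_{\Om_v}$, and close with the triangle inequality. The paper states the membership of the sum without elaboration, whereas you flag and resolve the subsidiary bookkeeping (mixed zero and nonzero $p_u$'s via the $\cup\,\Om_L\cup\Om'_L$ clause and $w_v=\sum_u w_u$, and the passage from $\cS$ to $\ov\cS$ by continuity); these points are correct but left implicit in the paper.
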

\begin{proof}
For each 
$
\zeta_u \in \Rp\times\C
$
$
(u\in V),
$
we can take
$
\ti\ze_u\in
\ov\cS_{\Om_u}\cup\{(0,0)\}
$
so that
$
\eta_u(\ze_u)=|\ze_u-\ti\ze_u|
$
holds.
Therefore,
we find the following inequality:
$$
D_v\big(\ze_v,(\zeta_u)_{u\in V_v^1}\big)
=
\sum_{u\in V^1_v}
|\ze_u-\ti\ze_u|+
\Big| \zeta_v- \sum_{u\in V_v^1}\zeta_u \Big|
\geq
\Big| \zeta_v- \sum_{u\in V_v^1}\ti\zeta_u \Big|.
$$
Since 
$\dst
\sum_{u\in V_v^1}\ti\zeta_u
\in\ov\cS_{\Om_v}\cup\{(0,0)\},
$
we obtain \eqref{5.1}.
\end{proof}

We now define
a family of functions
$$
\ze^t_v:
\De_T\to
\cM_{\Om_v}
\quad
(t\in[a,1],v\in V)
$$
by the following process:
We first define $\ze^t_{\hat{v}}$ by
$
\ze^t_{\hat{v}}(\vec{s}\,)
:=
\ti\ga(t)
$
for all $\vec{s}\in\De_T$.
Next,
assume that $\ze_v^t=(\la_v^t,\xi_v^t)$ has already determined
and that
\begin{eq-text}\label{5.2}
$\ze_v^t(\vec{s}\,)$
is $C^1$ on $[a,1]$ 
and $\la_v^t(\vec{s}\,)$ is increasing
for each $\vec{s}\in\De_T$,
\end{eq-text}
\begin{eq-text}\label{5.3}
$\la_{v}^a(\vec{s}\,)=0$
if and only if $s_v=0$.
\end{eq-text}
Then,
we define $\ze_u^t$ $(u\in V_v^1)$
as follows:
\begin{enumerate}[--]
\item
When $s_v=0$,
we set
$\ze^t_u(\vec{s}\,):=0$
for $t\in[a,1]$.

\item
When $s_v>0$,
we define $\ze_u^t(\vec{s}\,)$ $(u\in V_v^1)$
by the solution of the differential equation
\begin{equation}\label{5.4}
\frac{d\ze_u}{dt}
= \frac{\eta_u(\ze_u)}
{D_v\big(\ze_v^t,(\zeta_u)_{u\in V_v^1}\big)} 
\frac{\pa\ze^t_{v}}{\pa t}
\tag{5.4.$u$}
\end{equation}
with the initial condition
$
\ze_u|_{t=a}=
s_u\ti\ga(a).
$
\end{enumerate}
Notice that,
by the induction hypothesises
and Lemma \ref{lmm:5.2},
we can take $\eps>0$ so that
$D_v\big(\ze_v^t,(\zeta_u)_{u\in V_v^1}\big)\geq\eps$
holds for every $t\in[a,1]$ and $\ze_u\in\Rp\times\C$
$(u\in V_v^1)$ when $s_v>0$.
We then see that
the right hand side of 
(5.4.$u$)  %
is locally Lipschitz continuous
on the variables $\ze_u$ $(u\in V_v^1)$.
Therefore,
since 
$
s_u\ti\ga(a)
\notin
\{
\ze_u\in\Rp\times\C
\mid
\eta_u(\ze_u)=0
\}
=\ov\cS_{\Om_u}\cup\{(0,0)\}
$
when $s_u\neq0$,
the Cauchy-Lipschitz theorem yields
the existence and uniqueness of
the solutions
$
\ze_u:[a,1]\to\cM_{\Om_u}
\setminus\{(0,0)\}
$
$(u\in V_u^1)$
of \eqref{5.4} satisfying
the initial condition
$
\ze_u|_{t=a}=
s_u\ti\ga(a).
$
We immediately derive from the construction
of $\ze_u^t$ $\big(u\in V_v^1\big)$ that 
they also satisfy the induction hypothesises
\eqref{5.2} and \eqref{5.3}.
In such a way,
we can inductively determine
a family of functions
$\ze^t_v$ $(v\in V)$.

We now define a family of maps
$$
\Phi_t:\De_T\to\cM_{\Om}^T
:=\prod_{v\in V}\cM_{\Om_v}
\quad
(t\in[a,1])
$$
by 
$
\Phi_t:=\big(\ze_v^t\big)_{v\in V}.
$
Since each $\ze_v^t(\vec{s}\,)$ satisfies
$$
\la_v^t(\vec{s}\,)
=s_v|\ga(a)|+
\int_a^t\Big|\frac{\pa\xi_v^{t'}}{\pa t'}(\vec{s}\,)\Big|dt',
$$
we find that the path
$
\ti\ga_v^\cdot(\vec{s}\,):
[0,1]\to\cM_{\Om_v}
$
obtained by concatenating the paths
$s_v\ti\ga|_{[0,a]}$
and 
$
\ze_v^\cdot(\vec{s}\,):[a,1]\to\cM_{\Om_v}
$
defines an $\Om_v$-allowed path $\ga_v^\cdot(\vec{s}\,)$
and its lift $\un\ga{}_v^\cdot(\vec{s}\,)$ on $X_{\Om_v}$.
We then define a family of maps
$
\Psi_t:\De_T\to X_\Om^T
$
$(t\in[a,1])$
by
$
\Psi_t(\vec{s}\,):=
\big(\un\ga{}_v^t(\vec{s}\,)\big)_{v\in V}.
$

We now confirm that $\big(\Psi_t\big)_{t\in[a,1]}$ gives
a $(\ga,T)$-adapted deformation of $\sD\big( \ga(a) \big)$.
It suffices to show the following properties 
of $\big(\Phi_t\big)_{t\in[a,1]}$:
\begin{lmm}\label{lmm:5.3}
For each $v\in V$ and $t\in[a,1]$,
the function
$\ze_v^t$ on $\De_T$
only depends on the variables $s_u$
$\big(u\in W_v\big)$.
\end{lmm}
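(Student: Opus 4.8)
The statement to prove is Lemma 5.3: for each $v\in V$ and $t\in[a,1]$, the function $\ze_v^t$ on $\De_T$ depends only on the variables $s_u$ with $u\in W_v$, where $W_v=\{\hat v\}\cup\bigcup_{v'}V_{v'}^1$, the union taken over all vertices $v'$ on the path from $v_\upa$ to $\hat v$. I would prove this by induction following the same top-down construction used to define the family $(\ze_v^t)_{v\in V}$ — that is, descending from the root $\hat v$ toward the leaves, exactly mirroring the inductive definition via the differential equations \eqref{5.4}.

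The base case is $v=\hat v$: here $\ze_{\hat v}^t(\vec s\,)=\ti\ga(t)$ is independent of $\vec s$ altogether, and $W_{\hat v}=\{\hat v\}$, so the claim holds trivially (a constant in $\vec s$ vacuously depends only on $s_{\hat v}$). For the inductive step, fix a vertex $v$ with $s_v>0$ (the case $s_v=0$ gives $\ze_u^t\equiv0$ for $u\in V_v^1$, which depends on $s_v$ alone, hence on $W_u\ni s_v$) and suppose $\ze_v^t$ depends only on $\{s_u : u\in W_v\}$. I then examine the initial value problem \eqref{5.4} defining $\ze_u^t$ for $u\in V_v^1$: the right-hand side $\dfrac{\eta_u(\ze_u)}{D_v(\ze_v^t,(\ze_{u'})_{u'\in V_v^1})}\,\dfrac{\pa\ze_v^t}{\pa t}$ involves, besides the unknowns $(\ze_{u'})_{u'\in V_v^1}$ themselves, only the quantity $\ze_v^t$ (and its $t$-derivative). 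By the inductive hypothesis $\ze_v^t$ is a function of the $s_{u'}$ with $u'\in W_v$ only; the initial condition $\ze_u|_{t=a}=s_u\ti\ga(a)$ depends additionally on $s_u$. Therefore the data of the IVP for the vector $(\ze_{u'}^t)_{u'\in V_v^1}$ depend only on $\{s_{u'} : u'\in W_v\}\cup\{s_{u'} : u'\in V_v^1\}$. By uniqueness of solutions (Cauchy–Lipschitz, as already invoked in the construction), the solution $\ze_u^t$ is a function of precisely these variables. Finally I check that this variable set is exactly $W_u$: since $u\in V_v^1$, we have $u_\upa=v$, so the path from $u_\upa=v$ to $\hat v$ is the path from $v$ to $\hat v$; hence $W_u=\{\hat v\}\cup\bigcup_{v'\text{ on path }v\to\hat v}V_{v'}^1 = \big(\{\hat v\}\cup\bigcup_{v'\text{ on path }v_\upa\to\hat v}V_{v'}^1\big)\cup V_v^1 = W_v\cup V_v^1$, which is the variable set we found. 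This closes the induction.

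The main — though modest — obstacle is purely bookkeeping: verifying the identity $W_u=W_v\cup V_v^1$ for $u\in V_v^1$ from the definition of $W_v$, i.e., correctly tracking how the path from $u_\upa$ to $\hat v$ relates to the path from $v_\upa$ to $\hat v$ when $u_\upa=v$. One must be careful that $v$ itself lies on the path from $u_\upa(=v)$ to $\hat v$, so that $V_v^1$ enters $W_u$; this is where the precise convention in Definition \ref{dfn:4.2}.3 for the union range matters. Apart from this, the argument is a direct descent through the recursive definition, using nothing beyond the uniqueness half of Cauchy–Lipschitz already established above and the observation that each equation \eqref{5.4.$u$} introduces no new $s$-dependence beyond that already present in $\ze_v^t$ and in the initial condition.
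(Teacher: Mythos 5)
Your proof is correct and follows precisely the route the paper intends: the paper itself dismisses the lemma with ``immediately follows from the construction,'' and your top-down induction along the defining ODE system, together with the bookkeeping identity $W_u = W_v \cup V_v^1$ for $u\in V_v^1$, is exactly the verification that remark is pointing to.
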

\begin{lmm}\label{lmm:5.4}
For every $t\in[a,1]$,
the followings hold:
\setcounter{equation}{4}
\begin{equation}\label{5.5}
\ze_v^t=0
\quad\text{when}\quad
s_v=0
\quad(v\in V^\circ),
\end{equation}
\begin{equation}\label{5.6}
\sum_{u\in V_v^1}\ze_u^t
=\ze_v^t
\quad\text{when}\quad
\sum_{u\in V_v^1}s_u=s_v
\quad(v\in V\setminus L).
\end{equation}
\end{lmm}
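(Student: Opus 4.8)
The plan is to prove \eqref{5.5} and \eqref{5.6} together, following the same root-to-leaf induction used to construct the functions $\ze_v^t$, with two workhorses: the uniqueness part of the Cauchy--Lipschitz theorem for the system \eqref{5.4}, and a Gr\"onwall estimate. The point to keep in mind throughout is that, for a vertex $v\in V\setminus L$ with $s_v>0$, all the equations \eqref{5.4} governing the children $u\in V_v^1$ share the common denominator $D_v\big(\ze_v^t,(\ze_u)_{u\in V_v^1}\big)$, which by Lemma \ref{lmm:5.2} and the construction of $\ze_v^t$ is bounded below by a constant $\eps>0$; combined with the global Lipschitz continuity of $\eta_u$ and $D_v$, this makes the right-hand side of \eqref{5.4} globally Lipschitz in $(\ze_u)_{u\in V_v^1}$, so the solution vector is uniquely determined by its initial data $(s_u\ti\ga(a))_{u\in V_v^1}$ and by the function $\ze_v^t$.

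For \eqref{5.5}, I would take $v\in V^\circ$ with $s_v=0$, so that $v\in V_{v_\upa}^1$ and $\ze_v^t$ is produced while processing $v_\upa$. If $s_{v_\upa}=0$, the construction sets $\ze_v^t\equiv0$ outright. If $s_{v_\upa}>0$, I would note that the identically-zero function solves the $v$-th equation of \eqref{5.4}, because $\eta_v$ vanishes at the origin and the initial value is $s_v\ti\ga(a)=0$; replacing every component of the solution of \eqref{5.4} whose initial value vanishes by $0$ and solving the remaining equations produces a genuine solution of the full system, so by uniqueness this must be the solution, and hence $\ze_v^t\equiv0$.

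For \eqref{5.6}, I would fix $v\in V\setminus L$ with $\sum_{u\in V_v^1}s_u=s_v$. When $s_v=0$, every $s_u$ $(u\in V_v^1)$ vanishes, so both sides are $0$ by \eqref{5.5} (using $V_v^1\subset V^\circ$, and noting that $v=\hat v$ is impossible here since $s_{\hat v}=1$). When $s_v>0$, I would set $w^t:=\ze_v^t-\sum_{u\in V_v^1}\ze_u^t$; the initial conditions give $w^a=s_v\ti\ga(a)-\big(\sum_{u\in V_v^1}s_u\big)\ti\ga(a)=0$. Summing the equations \eqref{5.4} over $u\in V_v^1$ and using $D_v\big(\ze_v^t,(\ze_u^t)_{u\in V_v^1}\big)=\sum_{u\in V_v^1}\eta_u(\ze_u^t)+|w^t|$ (the components with $s_u=0$ contribute $\ze_u^t=0$, which is harmless since $\eta_u(0,0)=0$), one obtains
\begin{equation*}
\frac{dw^t}{dt}
=\frac{\pa\ze_v^t}{\pa t}\cdot\frac{|w^t|}{D_v\big(\ze_v^t,(\ze_u^t)_{u\in V_v^1}\big)},
\end{equation*}
whence $\frac{d}{dt}|w^t|^2\le 2\eps^{-1}\big|\pa_t\ze_v^t\big|\,|w^t|^2$. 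Since $w^a=0$ and $t\mapsto|\pa_t\ze_v^t|$ is continuous on $[a,1]$ by \eqref{5.2}, Gr\"onwall's inequality forces $w^t\equiv0$, which is \eqref{5.6}.

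I expect the main obstacle to be making the ``zero components stay zero'' step rigorous: because the denominator $D_v$ couples all the children $\ze_u$, one cannot decouple the $v$-th equation of \eqref{5.4}, so the argument must be routed through the uniqueness of solutions of the full system rather than a direct substitution. Once this is done, \eqref{5.6} is a routine Gr\"onwall estimate whose only inputs --- the uniform lower bound $D_v\ge\eps$ and the integrability of $|\pa_t\ze_v^t|$ --- are already secured by the construction carried out earlier in this section, and the bookkeeping of which $s_u$ may vanish while $s_v>0$ is harmless precisely because the corresponding terms $\eta_u(\ze_u^t)$ drop out.
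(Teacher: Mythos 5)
Your proof is correct and follows essentially the same path as the paper's. For \eqref{5.5} both arguments use that $\eta_v(0)=0$ makes the hyperplane $\{\ze_v=0\}$ invariant under the flow of the $v$-th equation of \eqref{5.4} and invoke uniqueness of the coupled system; for \eqref{5.6} both derive the ODE \eqref{5.7} for $\Xi=\ze_v^t-\sum_{u\in V_v^1}\ze_u^t$ with $\Xi|_{t=a}=0$, the paper then concluding by uniqueness of solutions through $\Xi=0$ while you run a Gronwall estimate on $|\Xi|^2$ --- two phrasings of the same argument, resting on the same lower bound $D_v\geq\eps$.
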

\begin{prp}\label{prp:5.5}
The map $\Phi_\cdot:[a,1]\times\De_T\to \cM_\Om^T$
is Lipschitz continuous.
\end{prp}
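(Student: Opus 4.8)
The plan is to prove, by induction on the vertices of $T$ ordered from the root $\hat v$ towards the leaves (using the tree structure recorded in Lemma \ref{lmm:3.2}), the following statement for each $v\in V$: the map $(t,\vec s)\mapsto\ze_v^t(\vec s)$ is Lipschitz on $[a,1]\times\De_T$, it is $C^1$ in $t$ with $|\pa\ze_v^t/\pa t|\leq\sqrt2\,\sup_{[a,1]}|\ga'|$ everywhere, and $\pa\ze_v^t/\pa t$ is Lipschitz in $\vec s$ uniformly in $t$. For $v=\hat v$ this is immediate, since $\ze_{\hat v}^t=\ti\ga(t)$ does not depend on $\vec s$ and $\ga|_{[a,1]}$ is $C^1$. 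For the inductive step, fixing $v$, a child $u\in V_v^1$, and recalling that $[a,1]\times\De_T$ is compact and convex, it is enough to obtain a local Lipschitz estimate near an arbitrary point $(t_0,\vec s_0)$, and I would distinguish the cases $(s_0)_v>0$ and $(s_0)_v=0$.

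In the first case, over a small neighbourhood $U$ of $\vec s_0$ the continuous family $\{\ze_v^t(\vec s):(t,\vec s)\in[a,1]\times U\}$ is a compact subset of the open set $\cM_{\Om_v}$, so $\dist(\ze_v^t,\ov\cS_{\Om_v})$ is bounded below there, while $|\ze_v^t|\geq\la_v^t\geq s_v\rho$ keeps $\ze_v^t$ away from the origin; by Lemma \ref{lmm:5.2} this yields $D_v\geq\eps_U>0$ on $[a,1]\times U$. Hence the vector field defining the coupled system \eqref{5.4} for $(\ze_u)_{u\in V_v^1}$ is Lipschitz in the unknowns --- using that each $\eta_u$ is $1$-Lipschitz and the Lipschitz estimate for $D_v$ stated just before Lemma \ref{lmm:5.2} --- and, by the inductive hypothesis applied to $\ze_v^t$ and $\pa\ze_v^t/\pa t$, Lipschitz in the parameter $\vec s$, while the initial data $s_u\ti\ga(a)$ are affine in $\vec s$. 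Gronwall's inequality then produces the local Lipschitz estimate for $\ze_u^t$ near $(t_0,\vec s_0)$, and since $\pa\ze_u^t/\pa t$ equals the right-hand side of \eqref{5.4} along the solution, its Lipschitz dependence on $\vec s$ follows at the same time; the bound $|\pa\ze_u^t/\pa t|=\eta_u(\ze_u^t)\,D_v^{-1}|\pa\ze_v^t/\pa t|\leq|\pa\ze_v^t/\pa t|$ gives the claimed uniform bound on the $t$-derivative.

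The main obstacle is the case $(s_0)_v=0$, where $D_v$ degenerates; here $\ze_u^t(\vec s_0)=0$ by construction (cf.\ Lemma \ref{lmm:5.4}). The crux is a quantitative matching of the two branches of the construction across $\{s_v=0\}$. Since $\ze_v^t$ depends only on the coordinates indexed by $W_v$ (Lemma \ref{lmm:5.3}) and $W_v\cap V_v=\{v\}$, comparing $\vec s$ near $\vec s_0$ with the point $\vec s^\ast\in\De_T$ obtained from $\vec s$ by setting to $0$ every coordinate indexed by $V_v$ --- at which $\ze_v^\cdot\equiv0$ by Lemma \ref{lmm:5.4}, so also $\pa\ze_v^\cdot/\pa t\equiv0$ --- the inductive Lipschitz bounds force $|\ze_v^t(\vec s)|\leq Cs_v$ and $|\pa\ze_v^t/\pa t(\vec s)|\leq Cs_v$ on a small neighbourhood of $\vec s_0$. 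Shrinking the neighbourhood so that $|\ze_v^t|\leq\rho(\Om)/2$, and using $2\rho<\rho(\Om)=\rho(\Om_v)$, hence $\ov\cS_{\Om_v}\subset\{\la\geq\rho(\Om)\}$, on this region $\eta_v$ is exactly the Euclidean norm, so $D_v\geq\eta_v(\ze_v^t)=|\ze_v^t|\geq\la_v^t\geq s_v\rho$. Feeding $|\pa\ze_v^t/\pa t|\leq Cs_v$ into \eqref{5.4} gives first the a priori bound $|\ze_u^t(\vec s)|\leq C's_v$, and then, in the Gronwall estimate for the $\vec s$-dependence, every dangerous factor $D_v^{-1}$ is compensated by a factor $s_v$ coming from $|\ze_u^t|$, from $\eta_u(\ze_u^t)$, or from $|\pa\ze_v^t/\pa t|$, so the resulting constants stay bounded as $s_v\to0$. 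For two points $\vec s,\vec s'$ near $\vec s_0$ I would split further according to whether $s_v$ and $s'_v$ are comparable (the compensated Gronwall estimate applies) or far apart (then $|\ze_u^t(\vec s)-\ze_u^t(\vec s')|\leq C'(s_v+s'_v)\lesssim|\vec s-\vec s'|$ directly from the a priori bound), the $t$-Lipschitz bound being uniform in either case; applying the same reasoning to $\pa\ze_u^t/\pa t=\eta_u(\ze_u^t)\,D_v^{-1}\,\pa\ze_v^t/\pa t$ gives its Lipschitz dependence on $\vec s$ near $\vec s_0$. This completes the induction, and since $\Phi_t=(\ze_v^t)_{v\in V}$ and $[a,1]\times\De_T$ is compact and convex, $\Phi_\cdot$ is Lipschitz continuous.

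I expect the $(s_0)_v=0$ analysis to be the hard part: the system \eqref{5.4} is genuinely singular there, and what makes it manageable is that near $\{s_v=0\}$ all the $\ze$'s occurring in the construction lie within distance $\rho(\Om)/2$ of the origin --- a consequence of $2\rho<\rho(\Om)$ --- so that $\eta_v$ linearises to the Euclidean norm and the vanishing of $\ze_v^t$ as $s_v\to0$ is exactly balanced by the lower bound $D_v\geq\la_v^t\geq s_v\rho$.
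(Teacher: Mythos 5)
Your plan is correct in its broad strokes (induction from the root towards the leaves; Cauchy--Lipschitz for the regular case $s_v>0$; a separate argument for the degenerate locus $s_v=0$), but the treatment of the degenerate case is genuinely different from the paper's.

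The paper's proof handles the neighbourhood of $\{s_v=0\}$ via Lemma~\ref{lmm:5.7}, which you do not invoke: when $\la_v^t\le\rho$, the ODE (5.4.$u$) admits the closed-form solution $\ze_u^t=s_u s_v^{-1}\ze_v^t$, essentially because in that region $\eta_u$ reduces to the Euclidean norm and the vector field becomes exactly scale-covariant. Feeding this identity into the difference $\ze_u^t(\vec s\,)-\ze_u^t(\vec s'\,)$ yields the three-term telescoping estimate displayed in the paper's proof, and the Lipschitz bound then drops out directly from the inductive hypothesis (Lipschitz continuity of $\ze_v^t$, together with $\ze_v^t\equiv0$ on $\{s_v=0\}$, which gives $|\ze_v^t|/s_v\le C$) and the elementary inequality $s'_u\le s'_v\le s_v$. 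No Gronwall inequality and no differentiability of $\ze_v^t$ in $\vec s$ are needed in this branch.

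Your route instead pushes the Gronwall argument all the way into the degenerate region, compensating the blow-up of $D_v^{-1}$ by the a~priori bounds $|\ze_v^t|\lesssim s_v$, $|\pa\ze_v^t/\pa t|\lesssim s_v$, $\eta_u(\ze_u^t)\lesssim s_v$, together with $D_v\ge\eta_v(\ze_v^t)\ge s_v\rho$ and the structural fact $\eta_u(\ze_u)/D_v\le1$, and then splits the comparison of $\vec s,\vec s'$ according to whether $s_v,s'_v$ are comparable. This can be made to work: I checked that after pairing each $D_v^{-1}$ with a matching $O(s_v)$ factor, both the Lipschitz constant of the vector field in $\ze$ and its Lipschitz constant in the parameter $\vec s$ stay bounded as $s_v\to0$, and the ``far apart'' regime is covered by the a~priori bound. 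But this is a heavier argument than the paper's, and it forces you to carry a stronger inductive hypothesis: you must know that $\pa\ze_v^t/\pa t$ is Lipschitz in $\vec s$ uniformly in $t$ (and that it vanishes on $\{s_v=0\}$), not merely that $\ze_v^t$ is Lipschitz, which is the additional piece you correctly flag and which the paper sidesteps in this step by relying on Lemma~\ref{lmm:5.7}. The upshot: your approach is essentially a version of the Gronwall estimate the paper deploys in Section~\ref{sec:6} (Lemma~\ref{lmm:6.2} and inequality \eqref{6.5}), applied to the parameter dependence as well, whereas the paper's Section~\ref{sec:5} argument exploits the exact scaling relation from Lemma~\ref{lmm:5.7} to get the Lipschitz bound near $\{s_v=0\}$ in one line. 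I would recommend at least mentioning Lemma~\ref{lmm:5.7} as the more economical route, even if you keep the Gronwall version as an alternative.
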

Lemma \ref{lmm:5.3} immediately follows 
from the construction of $\ze_v^t$.
\begin{proof}[Proof of Lemma \ref{lmm:5.4}]
Since $\eta_v(0)=0$ for
$v\in V^\circ$,
we find that the hyperplane defined by
$\ze_v=0$ is invariant by the flow of
(5.4.$v$)  %
when $s_{v_\upa}>0$.
Therefore, combining the case $s_{v_\upa}=0$,
we obtain \eqref{5.5}.
We then prove \eqref{5.6}.
We first consider the case
$s_v>0$.
Since $\ze_u^t$ $(u\in V_v^1)$ satisfies 
(5.4.$u$), %
we find that
$
\Xi:=\ze_v^t-
\sum_{u\in V_v^1}\ze_u^t
$
satisfies
\begin{equation}\label{5.7}
\frac{d\,\Xi}{dt}
:=
\frac{|\,\Xi\,|}
{D_v\big((\zeta_u^t)_{u\in V_v^1},\Xi\,\big)} 
\frac{\pa\ze^t_{v}}{\pa t}
\end{equation}
and an initial condition
$
\Xi|_{t=a}
=\big(s_v-\sum_{u\in V_v^1}s_u\big)\ti\ga(a),
$
where
$$
D_v\big((\zeta_u)_{u\in V_v^1},\Xi\,\big)
:=
\sum_{u\in V^1_v}\eta_u(\zeta_u) +
|\, \Xi \,|.
$$
Therefore,
since $\Xi=0$ also satisfies \eqref{5.7}
and the initial condition when 
$
\sum_{u\in V_v^1}s_u=s_v,
$
we obtain \eqref{5.6} from the uniqueness of
such solutions.
On the other hand,
the equation \eqref{5.6} is obvious when $s_{v}=0$
because $\ze_v^t=0$ and $\ze_u^t=0$ $\big(u\in V_v^1\big)$.
\end{proof}
\begin{rem}
Since the inequality
$$
\sum_{u\in V^1_v}\eta_u(\zeta_u)
\leq
D_v\big(\ze_v,(\zeta_u)_{u\in V_v^1}\big)
$$
holds for
$
\ze_v,\ze_u\in\Rp\times\C,
$
we can derive from
(5.4.$u$)  %
the following inequality:
\begin{equation}\label{5.8}
\sum_{u\in V_v^1}\la_u^t
\leq\la_v^t
\quad
\text{holds for}
\quad
t\in[a,1].
\end{equation}
\end{rem}
For the proof of Proposition \ref{prp:5.5},
we use the following
\begin{lmm}\label{lmm:5.7}
If $\la_v^t$ $(v\in V\setminus L)$
satisfies $0<\la_{v}^t(\vec{s}\,)\leq\rho$
at $\vec{s}\in\De_T$ and $t\in[a,1]$,
$\ze_u^t(\vec{s}\,)$ $(u\in V_v^1)$
is written as
$\ze_u^t(\vec{s}\,)=s_us_{v}^{-1}\ze_{v}^t(\vec{s}\,)$.
\end{lmm}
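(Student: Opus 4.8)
The plan is to show that the ``radially rescaled'' family $Z_u(t):=s_us_v^{-1}\ze_v^t(\vec s)$, $u\in V_v^1$, solves the same Cauchy problem \eqref{5.4} as $(\ze_u^\cdot)_{u\in V_v^1}$ as long as $\la_v^t(\vec s)\le\rho$, and then to conclude by the uniqueness half of the Cauchy--Lipschitz theorem already invoked in the construction. First I would record two preliminary points. Since $\la_v^\cdot$ is increasing with $\la_v^a(\vec s)=s_v\rho$, the hypothesis $\la_v^t(\vec s)>0$ forces $s_v>0$, so the rescaling makes sense; and, $\la_v^\cdot$ being continuous and increasing, $\{t\in[a,1]\mid\la_v^t(\vec s)\le\rho\}$ is a closed subinterval $[a,t_0]$ containing $a$, so it suffices to prove $\ze_u^t=Z_u(t)$ on $[a,t_0]$.

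The geometric heart of the argument is that, in the region traversed by these trajectories, every $\eta_w$ ($w\in V$) coincides with the Euclidean distance to the origin and is therefore positively homogeneous of degree one. To place the trajectories inside such a region I would use the length identity $\la_v^t=s_v\rho+\int_a^t|\pa\xi_v^{t'}/\pa t'|\,\dd t'$ together with $|\xi_v^a|=s_v\rho$ to get $|\xi_v^t|\le\la_v^t\le\rho$; since $0\le s_us_v^{-1}\le1$ by the defining inequality of $\De_T$, it follows that $\ze_v^t$ and every $Z_u(t)=s_us_v^{-1}\ze_v^t$ lie in $R:=\{(\la,\xi)\in\Rp\times\C\mid|\xi|\le\la\le\rho\}$. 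Now $\rho(\Om_w)=\rho(\Om)$ for every $w$ (immediate from Definition \ref{dfn:2.1}), and, since $(\Om_w)_\la\neq\O$ forces $\la\ge\rho(\Om_w)$, the set $\ov\cS_{\Om_w}$ is contained in $\{\la\ge\rho(\Om)\}$; hence for $\zeta=(\la,\xi)\in R$ one has $\dist(\zeta,\ov\cS_{\Om_w})\ge\rho(\Om)-\la$, which by the hypothesis $2\rho<\rho(\Om)$ dominates $|\zeta|$, so that $\eta_w(\zeta)=\dist(\zeta,\{(0,0)\})=|\zeta|$ on $R$. In particular $\eta_v(\ze_v^t)=|\ze_v^t|$ and $\eta_u(Z_u(t))=s_us_v^{-1}|\ze_v^t|$, and all of these are positive because $\la_v^t\ge s_v\rho>0$.

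Granting this, the verification is a one-line computation. By homogeneity and $\sum_{u'\in V_v^1}s_{u'}\le s_v$, the denominator in \eqref{5.4} evaluated at $(Z_{u'})_{u'}$ is $D_v(\ze_v^t,(Z_{u'})_{u'\in V_v^1})=s_v^{-1}\big(\sum_{u'}s_{u'}\big)|\ze_v^t|+\big(1-s_v^{-1}\sum_{u'}s_{u'}\big)|\ze_v^t|=|\ze_v^t|$, so the right-hand side of \eqref{5.4} at $(Z_{u'})_{u'}$ equals $s_us_v^{-1}\pa_t\ze_v^t=\dd Z_u/\dd t$, and $Z_u(a)=s_us_v^{-1}\ze_v^a=s_us_v^{-1}(s_v\ti\ga(a))=s_u\ti\ga(a)$ is the prescribed initial value (using $\ze_v^a=s_v\ti\ga(a)$). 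Since the right-hand side of \eqref{5.4} is locally Lipschitz in $(\ze_{u'})_{u'\in V_v^1}$ ($\eta_{u'}$ are $1$-Lipschitz and $D_v$ is Lipschitz and bounded below by a positive constant, by Lemma \ref{lmm:5.2} and $\eta_v(\ze_v^t)>0$), uniqueness forces $\ze_u^t=Z_u(t)=s_us_v^{-1}\ze_v^t$ on $[a,t_0]$; the degenerate case $s_u=0$ (with $s_v>0$) is covered since then $\ze_u^t\equiv0=Z_u(t)$, and \eqref{5.5}--\eqref{5.6} may be used instead if preferred.

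I expect the main obstacle to be the geometric step: one must ensure that the entire trajectory $t\mapsto(\ze_v^t,(Z_u(t))_{u\in V_v^1})$ stays, for $t\in[a,t_0]$, inside an \emph{open} neighborhood on which all $\eta_w$ equal $|\cdot|$, so that \eqref{5.4} and the rescaled ODE genuinely agree there and the uniqueness theorem applies; this is exactly what the a priori bounds $|\xi_v^t|\le\la_v^t\le\rho$, combined with the separation $2\rho<\rho(\Om)$ between the working radius and $\ov\cS_{\Om_w}$, are designed to guarantee. The remaining verifications are routine.
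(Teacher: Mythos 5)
Your proof is correct and follows essentially the same route as the paper's: reduce $\eta_u$ to the Euclidean distance to the origin in the region traversed by the trajectories (the paper gets $\la_u^{t'}\le\la_v^t\le\rho$ via \eqref{5.8}, you get it via the scaling $s_us_v^{-1}\le1$ together with $|\xi_v^t|\le\la_v^t\le\rho$), observe that the radially rescaled family then solves the same Cauchy problem \eqref{5.4} with the same initial data, and conclude by uniqueness. You merely make explicit the homogeneity computation for $D_v$ and the geometric verification that the paper summarizes as ``by the definition of $\eta_u$.''
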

\begin{proof}
Since $\la_u^\cdot$ $(u\in V_v^1)$ satisfies
$\la_u^{t'}(\vec{s}\,)\leq\la_v^t(\vec{s}\,)\leq\rho$
for $t'\in[a,t]$,
we find that 
$\eta_u\big(\ze_u^{t'}(\vec{s}\,)\big)=|\ze_u^{t'}(\vec{s}\,)|$
holds by the definition of $\eta_u$.
Therefore,
since $\ze_u=s_us_{v}^{-1}\ze_{v}^t$ also gives
a solution of 
(5.4.$u$)  %
satisfying the initial condition 
$
\ze_u|_{t=a}=
s_u\ti\ga(a),
$
we obtain the representation of $\ze_u^t$
from the uniqueness of such solutions.
\end{proof}
\begin{proof}[Proof of Proposition \ref{prp:5.5}]
We prove by induction
that each of the functions
$\ze_v^t$ $(v\in V)$ is Lipschitz continuous
on $\De_T$.
Since $\ze_{\hat v}^t$ is independent of
$\vec{s}\in\De_T$,
the statement holds for $v=\hat{v}$.
We then assume that $\la_v^t$ is
Lipschitz continuous on $\De_T$.
Since the right hand side of
(5.4.$u$)  %
$(u\in V_v^1)$
is Lipschitz continuous on the variables
$s_{v'}$ $(v'\in W_v)$
by the induction hypothesis,
we find by the use of the Cauchy-Lipschitz theorem
that $\ze_u^t$ is locally Lipschitz continuous
on the variables $s_{v'}$ $(v'\in W_u)$
when $s_v>0$.
Further, 
we see by the use of the representation
of $\ze_u^t$ in Lemma \ref{lmm:5.7} that
the following holds for $\vec{s},\vec{s'}\in\De_T$
with $s_v,s'_v$ sufficiently small:
$$
\big|\ze_u^t(\vec{s}\,)-\ze_u^t(\vec{s'}\ )\big|
\leq
\frac{\big|\ze_v^t(\vec{s}\,)\big|}{s_v}
|s_u-s'_u|
+
\frac{s'_u}{s_v}
\big|\ze_v^t(\vec{s}\,)-\ze_v^t(\vec{s'}\,)\big|
+
\frac{s'_u}{s_v}
\frac{\big|\ze_v^t(\vec{s'}\,)\big|}{s'_v}
|s'_v-s_v|.
$$
We may assume without loss of generality
that $s'_v\leq s_v$, and hence,
$s'_u\leq s_v$.
Therefore, we find by the induction hypothesis
that we can take the Lipschitz constant
of $\ze_u^t$ uniformly in a neighborhood of $s_v=0$.
Thus, we have the Lipschitz continuity of $\ze_u^t$
on $\De_T$.
Since $\ze_u^t(\vec{s}\,)$ is $C^1$ on $[a,1]$
for each $\vec{s}\in\De_T$,
we obtain the Lipschitz continuity of
the map $\Phi_\cdot$.
\end{proof}

\section{Estimates for
the $(\ga,T)$-adapted deformation}\label{sec:6}

In the previous section,
we constructed the map
$\Phi_\cdot:[a,1]\times\De_T\to \cM_\Om^T$
for $\ga\in\Pi_{\Om_{\hat v}}^{\de,L}$ 
satisfying the assumptions in Theorem \ref{thm:4.6}
and the properties of the
$(\ga,T)$-adapted deformation
$\big(\Psi_t\big)_{t\in[a,1]}$
was reduced to its properties.
In this section,
we derive estimates for $\Phi_\cdot=(\ze_v^\cdot)_{v\in V}$
which are necessary to prove Theorem \ref{thm:4.6}.
We first show the following
\begin{lmm}\label{lmm:6.1}
For every $v\in V$,
$t\in[a,1]$ and $\vec{s}\in\De_T$,
the function
$\ze_v^t(\vec{s}\,)$ satisfies
\begin{equation}\label{6.1}
\dist\big( \ze_v^t(\vec{s}\,),  \ov\cS_{\Om_v} \big)
\geq
\de'(t),
\end{equation}
where $\de'(t)$ is the function given in \eqref{4.7}.
\end{lmm}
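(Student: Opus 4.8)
The plan is to prove the distance estimate \eqref{6.1} by induction on the vertices of $T$, following the same downward induction (from the root $\hat v$ towards the leaves) that was used to construct the functions $\ze_v^t$. For the root $\hat v$ we have $\ze_{\hat v}^t(\vec s\,)=\ti\ga(t)$, and since $\ga\in\Pi_{\Om_{\hat v}}^{\de,L}$ the point $\ti\ga(t)$ stays at distance at least $\de$ from $\ov\cS_{\Om_{\hat v}}$; as $\de\ge\delta'(t)$ (because $\delta'(t)=\rho\,e^{-2\sqrt2\,\delta^{-1}L_a(\ga_{|t})}\le\rho\le\delta$ is false in general — rather $\delta'(t)\le\rho=|\ga(a)|$, and one checks $\delta'(t)\le\de$ using $\rho\ge\de$), the base case holds. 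Then, assuming \eqref{6.1} at a vertex $v\in V\setminus L$, I would establish it at every $u\in V_v^1$ by controlling how fast $\ze_u^t$ can approach $\ov\cS_{\Om_u}$ along the flow \eqref{5.4}.

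The core of the induction step is a differential inequality for $\eta_u(\ze_u^t)=\dist\big(\ze_u^t,\{(0,0)\}\cup\ov\cS_{\Om_u}\big)$. Since $\eta_u$ is $1$-Lipschitz, along a solution of (5.4.$u$) one gets
\begin{equation*}
\Big|\frac{d}{dt}\eta_u(\ze_u^t)\Big|
\leq\Big|\frac{d\ze_u^t}{dt}\Big|
=\frac{\eta_u(\ze_u^t)}{D_v\big(\ze_v^t,(\ze_u^t)_{u\in V_v^1}\big)}\Big|\frac{\pa\ze_v^t}{\pa t}\Big|,
\end{equation*}
and the key point is to bound the denominator $D_v$ from below. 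Here I would split into two regimes according to the size of $\la_v^t$. When $0<\la_v^t\le\rho$, Lemma \ref{lmm:5.7} gives $\ze_u^t=s_us_v^{-1}\ze_v^t$, so $\eta_u(\ze_u^t)=|\ze_u^t|=s_us_v^{-1}|\ze_v^t|$ and the ratio in (5.4.$u$) collapses, reducing the estimate for $\ze_u^t$ directly to that for $\ze_v^t$ — in particular $\dist(\ze_u^t,\ov\cS_{\Om_u})$ is controlled by $\dist(\ze_v^t,\ov\cS_{\Om_v})$, which is $\ge\delta'(t)$ by the induction hypothesis (this uses $\Om_u\subset\Om_v$ and the monotonicity of seminorms/distances recorded after Theorem \ref{thm:2.4}). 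When $\la_v^t>\rho$, I would use Lemma \ref{lmm:5.2}, namely $D_v\ge\eta_v(\ze_v^t)\ge\dist(\ze_v^t,\ov\cS_{\Om_v})$, together with $|\pa\ze_v^t/\pa t|=|\ga'(t)|$-type control coming from the arc-length parametrization, to get the logarithmic-derivative bound
\begin{equation*}
\Big|\frac{d}{dt}\log\eta_u(\ze_u^t)\Big|
\leq\frac{1}{\eta_v(\ze_v^t)}\Big|\frac{\pa\ze_v^t}{\pa t}\Big|
\leq\frac{2\sqrt2}{\de}\,|\ga'(t)|,
\end{equation*}
where the constant $2\sqrt2$ accounts for passing between $\R\times\C\simeq\R^3$ distances and the deformation, and $\eta_v(\ze_v^t)\ge\de$ because $\la_v^t>\rho\ge\de$ forces $\eta_v(\ze_v^t)=|\ze_v^t|$... more carefully one uses the already-established lower bound $\dist(\ze_v^t,\ov\cS_{\Om_v})\ge\de'(t)\ge$ the relevant quantity. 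Integrating from $a$ to $t$ and using the initial value $\eta_u(\ze_u^a)=|s_u\ti\ga(a)|$ with $|\ti\ga(a)|=\rho$ (from \eqref{4.1}), one obtains $\eta_u(\ze_u^t)\ge\rho\,e^{-2\sqrt2\de^{-1}L_a(\ga_{|t})}=\delta'(t)$, at least when $s_u>0$; the case $s_u=0$ gives $\ze_u^t=0$ by \eqref{5.5}, for which \eqref{6.1} holds trivially since $\rho(\Om_v)>2\de>0$ keeps the origin away from $\ov\cS_{\Om_v}$. Finally, $\dist(\ze_u^t,\ov\cS_{\Om_u})\ge\eta_u(\ze_u^t)$ by definition of $\eta_u$, completing the step.

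The main obstacle I anticipate is bookkeeping the two regimes $\la_v^t\le\rho$ versus $\la_v^t>\rho$ coherently across the induction, since the relevant inequality $\Big|\frac{\pa\ze_v^t}{\pa t}\Big|\le|\ga'(t)|$ is not literally available for interior vertices — only $\la_v^t=s_v|\ga(a)|+\int_a^t|\pa_t\xi_v^{t'}|\,dt'$ holds, so one must carefully relate $|\pa_t\ze_v^t|$ to $|\pa_t\ze_{v_\upa}^t|$ through the chain of equations (5.4.$\cdot$) from $v$ up to $\hat v$, where at each stage the ratio $\eta/D\le 1$ contracts the speed. A secondary subtlety is ensuring that the lower bound on $D_v$ in the regime $\la_v^t>\rho$ really is $\ge\de$ (not merely $\ge\de'(t)$, which would degrade the constant at each level and break the uniform exponent); this is where the hypothesis $2\de<\rho(\Om)$ and the fact that once $\la_v^t>\rho$ the point $\ze_v^t$ lies outside $\overline{\mathcal S}$ at distance governed by $\eta_v=|\ze_v^t|\ge\rho>\de$... — in short, the constant $3\de^{-1}$ appearing in $c(t)$ in \eqref{4.7} (versus $2\sqrt2\,\de^{-1}$ in $\delta'(t)$) leaves exactly enough slack, and making that slack precise is the delicate part.
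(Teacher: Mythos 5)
Your overall direction — estimating $\eta_u(\ze_u^t)$ via a differential inequality coming from \eqref{5.4} and Lemma \ref{lmm:5.2} — is the right one, and you do flag, in your last paragraph, the essential phenomenon that $\eta/D\le 1$ contracts the speed along the chain $v\to v_\upa\to\cdots\to\hat v$. But you treat that as a looming obstacle rather than the key mechanism, and as a result your actual argument has two real gaps plus a couple of errors.

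\textbf{The missing chain estimate.} The paper's proof does \emph{not} do a one-step induction from $v_\upa$ down to $v$, and it needs no regime split on $\la_v^t\lessgtr\rho$ (Lemma \ref{lmm:5.7} plays no role here; it is used for Proposition \ref{prp:5.5}). Instead, writing $v=v_1\to\cdots\to v_\ell=\hat v$, one chains the identity from (5.4.$v_j$) with Lemma \ref{lmm:5.2} to get, in one stroke,
$\frac{1}{h_1^t}\big|\frac{\pa\ze_1^t}{\pa t}\big|\le\frac{1}{h_2^t}\big|\frac{\pa\ze_2^t}{\pa t}\big|\le\cdots\le\frac{1}{h_\ell^t}\big|\frac{\pa\ze_\ell^t}{\pa t}\big|\le\frac{|\ti\ga'|}{\de}$,
where $h_j^t:=\eta_{v_j}(\ze_{v_j}^t)$. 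At the top, $\ze_\ell^t=\ti\ga(t)$ so the speed $|\ti\ga'|$ \emph{is} literally available, and $h_\ell^t\ge\de$ from $\ga\in\Pi_{\Om_{\hat v}}^{\de,L}$. This resolves precisely the difficulty you worried about (``$|\pa_t\ze_v^t|\le|\ga'(t)|$ is not literally available for interior vertices''): it never needs to be, because all intermediate speeds telescope. Your step-by-step version requires a uniform lower bound on $\eta_{v_\upa}$ at each interior level, which you correctly notice would degrade to $\de'(t)$ and break the exponent; the chain avoids that entirely.

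\textbf{The missing dichotomy on $s_v$.} Integrating the logarithmic bound with the initial value $h_1^a=\sqrt2\,s_v\rho$ (note: $|\ti\ga(a)|=\sqrt2\,\rho$, not $\rho$ as you wrote, since $\ti\ga(a)=(L(\ga_{|a}),\ga(a))$ with $L(\ga_{|a})=|\ga(a)|=\rho$) gives only $h_1^t\ge\sqrt2\,s_v\rho\,e^{-\sqrt2\,\de^{-1}L_a(\ga_{|t})}$. This does \emph{not} imply $h_1^t\ge\de'(t)=\rho\,e^{-2\sqrt2\,\de^{-1}L_a(\ga_{|t})}$ when $s_v$ is small. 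The paper supplies the second half of the argument: if $\sqrt2\,s_v<e^{-\sqrt2\,\de^{-1}L_a(\ga_{|t})}$, the \emph{upper} bound $h_1^t\le\sqrt2\,s_v\rho\,e^{\sqrt2\,\de^{-1}L_a(\ga_{|t})}<\rho$ forces $|\ze_1^t|=h_1^t\le\rho$, and then $\dist(\ze_1^t,\ov\cS_{\Om_v})>\rho\ge\de'(t)$ because $\ov\cS_{\Om_v}$ lies in $\{\la\ge\rho(\Om_v)\}$ with $\rho(\Om_v)\ge\rho(\Om)>2\rho$. Your conclusion ``Integrating\ldots one obtains $\eta_u(\ze_u^t)\ge\rho\,e^{-2\sqrt2\,\de^{-1}L_a}$'' skips this entirely, and as written is false for small $s_u$.

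\textbf{Smaller slips.} You wrote $\eta_v(\ze_v^t)\ge\dist(\ze_v^t,\ov\cS_{\Om_v})$; the inequality goes the other way, since $\eta_v$ is the distance to $\{(0,0)\}\cup\ov\cS_{\Om_v}$ and hence is $\le\dist(\cdot,\ov\cS_{\Om_v})$. Also $s_v=0$ is handled at once by \eqref{5.5} (then $\ze_v^t=0$, far from $\ov\cS_{\Om_v}$), rather than via Lemma \ref{lmm:5.7}.
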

\begin{proof}
When $s_v=0$,
\eqref{6.1} immediately follows from \eqref{5.5}.
We then assume that $s_v>0$.
Let $\{v_j\}_{j=1}^{\ell}$ be vertexes
such that 
$v=v_1\to v_{2}\to \cdots \to v_\ell=\hat{v}$ gives a path
on $T$ from $v$ to $\hat{v}$.
Since $s_{v_j}\geq s_v$ holds for $j=1,\cdots,\ell$,
we find that $\ze^t_j:=\ze^t_{v_j}$ satisfies
$\eta_{j}\big(\ze^t_j(\vec{s}\,)\big)>0$ for every $t\in[a,1]$,
where $\eta_j:=\eta_{v_j}$.
We now consider estimates of
the function
$h_j^t:=\eta_{j}\big(\ze^t_j(\vec{s}\,)\big)$.
Since $h_1^t$ is Lipschitz continuous on $[a,1]$,
we find by Rademacher's theorem that
the derivative of $h_1^t$ exists a.e. on $[a,1]$
and satisfies
$
|dh_1^t/dt|
\leq
|\pa\ze_1^t/\pa t|.
$
We further obtain from
Lemma \ref{lmm:5.2} and
(5.4.$v_j$)
the following sequence of inequalities:
\begin{equation}\label{6.2}
\frac{1}{h_1^t}
\Big|\frac{\pa\ze^t_1}{\pa t}\Big|
\leq
\frac{1}{h_2^t}
\Big|\frac{\pa\ze^t_2}{\pa t}\Big|
\leq\cdots\leq
\frac{1}{h^t_{\ell}}
\Big|\frac{\pa\ze^t_\ell}{\pa t}\Big|
\leq
\frac{|\ti\ga'|}{\de}.
\end{equation}
Thus,
we have the following:
\begin{equation}\label{6.3}
\frac{1}{h_1^t}
\Big|
\frac{dh_1^t}{dt}
\Big|
\leq
\frac{|\ti\ga'|}{\de}
\quad
\text{holds a.e. on $[a,1]$.}
\end{equation}
Since
$h_1^a=\sqrt{2}s_v\rho$
and
$
|\ti\ga'|
=\sqrt{2}|\ga'|,
$
we derive from \eqref{6.3} the following:
\begin{equation*}
\sqrt{2}s_v\rho 
e^{-\sqrt{2}\delta^{-1}L_a(\ga_{|t})}
\leq
h_1^t
\leq
\sqrt{2}s_v\rho 
e^{\sqrt{2}\delta^{-1}L_a(\ga_{|t})}.
\end{equation*}
Therefore, 
we find that
$
\rho 
e^{-2\sqrt{2}\delta^{-1}L_a(\ga_{|t})}
\leq
h_1^t
$
holds for any $t\in[a,1]$
when 
$
\sqrt{2}s_v\geq
e^{-\sqrt{2}\delta^{-1}L_a(\ga_{|t})}.
$
Otherwise,
we have $h_1^t\leq\rho$,
and hence,
$
\big|\ze_1^t\big|\leq\rho
$
holds for any $t\in[a,1]$.
It proves \eqref{6.1}.
\end{proof}
We next show that
$\Phi_t=\big((\la_v^t,\xi_v^t)\big)_{v\in V}$
satisfies the estimate \eqref{4.6}.
We use the following
\begin{lmm}\label{lmm:6.2}
Suppose that $\vec{s},\vec{s'}\in\De_T$
satisfy $s_u=s'_u$ for $u\in V\setminus V_v^1$
with $v\in V\setminus L$.
Then, we have for every $t\in[a,1]$
\begin{equation}\label{6.4}
\sum_{u\in V_v^1}
\Big|
\frac{\pa\xi_u^t}{\pa t}(\vec{s}\,)
-\frac{\pa\xi_u^t}{\pa t}(\vec{s'}\,)
\Big|
\leq
3\frac{|\ga'|}{\de}
\sum_{u\in V_v^1}
\big|\xi_u^t(\vec{s}\,)-\xi_u^t(\vec{s}\,)\big|.
\end{equation}
\end{lmm}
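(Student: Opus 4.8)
\textbf{Proof plan for Lemma \ref{lmm:6.2}.}
The plan is to estimate the difference of the velocity fields $\pa\xi_u^t/\pa t$ directly from the defining differential equation (5.4.$u$), exploiting the Lipschitz bounds already established for $\eta_u$ and $D_v$. Write $\ze_u=\ze_u^t(\vec s\,)$ and $\ze'_u=\ze_u^t(\vec s'\,)$, and likewise $\ze_v,\ze'_v$. Since $s_u=s'_u$ for $u\in V\setminus V_v^1$, the parent data agree: in particular $\ze_v=\ze'_v$ and $\pa\ze_v^t/\pa t=\pa\ze_v'^t/\pa t$ by Lemma \ref{lmm:5.3} (the component $\ze_v^t$ depends only on $s_u$, $u\in W_v$, none of which lie in $V_v^1$). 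Hence from (5.4.$u$) the $v$-th velocity $\pa\ze_v^t/\pa t$ is a \emph{common} factor, and the difference of the $u$-velocities comes entirely from the factor $\eta_u(\ze_u)/D_v(\ze_v,(\ze_w)_{w\in V_v^1})$.

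The key step is then the elementary estimate: for $\ze_v=\ze'_v$ fixed,
\[
\Big|\frac{\eta_u(\ze_u)}{D_v(\ze_v,(\ze_w)_w)}-\frac{\eta_u(\ze'_u)}{D_v(\ze_v,(\ze'_w)_w)}\Big|
\]
is controlled by splitting into two terms via a common denominator: one term has numerator $|\eta_u(\ze_u)-\eta_u(\ze'_u)|\cdot D_v(\cdots)$ and the other $\eta_u(\ze'_u)\cdot|D_v(\ze_v,(\ze_w)_w)-D_v(\ze_v,(\ze'_w)_w)|$. Using $|\eta_u(\ze_u)-\eta_u(\ze'_u)|\le|\ze_u-\ze'_u|$, the analogous Lipschitz bound $|D_v(\ze_v,(\ze_w)_w)-D_v(\ze_v,(\ze'_w)_w)|\le 2\sum_{w}|\ze_w-\ze'_w|$ displayed before Lemma \ref{lmm:5.2}, together with $\eta_u(\ze'_u)\le D_v$ and $\eta_u(\ze_u)\le D_v$ (Lemma \ref{lmm:5.2}), both fractions $\eta_u/D_v$ lie in $[0,1]$, and after multiplying out and summing over $u\in V_v^1$ one obtains
\[
\sum_{u\in V_v^1}\Big|\frac{\eta_u(\ze_u)}{D_v}-\frac{\eta_u(\ze'_u)}{D_v'}\Big|\le 3\sum_{u\in V_v^1}|\ze_u-\ze'_u|
\]
(the factor $3$ being $1$ from the $\eta_u$-difference plus $2$ from the $D_v$-difference, after using $\sum_u\eta_u(\ze'_u)\le D_v$ to absorb the weight). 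Finally, multiplying by the common factor $|\pa\ze_v^t/\pa t|$ and invoking the chain of inequalities \eqref{6.2}, which gives $|\pa\ze_v^t/\pa t|/D_v\le|\ti\ga'|/\de=\sqrt2\,|\ga'|/\de$ along the path from $v$ up to $\hat v$, converts the bound on the $\eta_u/D_v$-differences into the claimed bound; one also uses that the real part $\la_u^t$ contributes nothing extra since $|\ze_u-\ze'_u|$ in the $\eta$/$D$ Lipschitz estimates may be replaced by $|\xi_u-\xi'_u|$ once one checks $\la_u^t(\vec s\,)=\la_u^t(\vec s'\,)$ on the relevant locus, or alternatively one tracks the full $\ze$-norm and projects at the end. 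The main obstacle I anticipate is precisely this last bookkeeping point — keeping the estimate in terms of $|\xi_u^t(\vec s\,)-\xi_u^t(\vec s'\,)|$ rather than $|\ze_u^t(\vec s\,)-\ze_u^t(\vec s'\,)|$, which requires showing the $\la$-components are suitably comparable (they satisfy the same integral formula $\la_u^t=s_u|\ga(a)|+\int_a^t|\pa\xi_u^{t'}/\pa t'|dt'$, so a Gr\"onwall-type argument closes the loop together with the $\xi$-estimate itself).
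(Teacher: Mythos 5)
Your plan is the paper's own proof: the common-denominator split of the $\eta_{u'}/D_v$ ratios into a $D_v$-difference term and an $\eta_{u'}$-difference term, the Lipschitz bounds $|\eta_{u'}(\ze_{u'})-\eta_{u'}(\ze'_{u'})|\le|\ze_{u'}-\ze'_{u'}|$ and $|D_v-D'_v|\le 2\sum_{u'}|\ze_{u'}-\ze'_{u'}|$ (with $\ze_v$ frozen) producing the factor $3$, and then \eqref{6.2} together with $D_v\ge\eta_v(\ze_v^t)$ from Lemma \ref{lmm:5.2} to bound the common velocity factor. Two small points in your accounting. First, the common factor to extract from (5.4.$u$) for the $\xi$-component is $\pa\xi_v^t/\pa t$, not $\pa\ze_v^t/\pa t$; since $|\pa\ze_v^t/\pa t|=\sqrt2\,|\pa\xi_v^t/\pa t|$ (the construction forces $\pa\la_v^t/\pa t=|\pa\xi_v^t/\pa t|$) and $|\ti\ga'|=\sqrt2\,|\ga'|$, the two $\sqrt2$'s cancel in \eqref{6.2} and one gets exactly $|\pa\xi_v^t/\pa t|/D_v\le|\ga'|/\de$, so no stray $\sqrt2$ survives. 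Second, the $\ze$-versus-$\xi$ bookkeeping you flag at the end is a real, if harmless, looseness that the paper shares: its proof stops at the bound $\frac{3}{D_v}\,|\pa\xi_v^t/\pa t|\sum_{u'}|\ze_{u'}-\ze'_{u'}|$ and silently passes to the $\xi$-form of \eqref{6.4}. The clean fix is the second alternative you mention: run the subsequent Gronwall step on $\ti\Xi_v(t):=\sum_{u}|\ze_u^t(\vec s\,)-\ze_u^t(\vec s'\,)|$ rather than $\Xi_v(t)$, then project to $\xi$; this only changes the initial datum from $\rho\sum|s_u-s'_u|$ to $\sqrt2\,\rho\sum|s_u-s'_u|$, a bounded factor absorbed into the constant $c(t)$.
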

\begin{proof}
Since the left hand side of
(5.4.$u$)
satisfies
\begin{align*}
&
\sum_{u'\in V_v^1}
\left|\left(
\frac{\eta_{u'}(\ze_{u'})}
{D_v\big(\ze_v^t,(\zeta_u)_{u\in V_v^1}\big)}
-
\frac{\eta_{u'}(\ze'_{u'})}
{D_v\big(\ze_v^t,(\zeta'_u)_{u\in V_v^1}\big)} 
\right)
\frac{\pa\xi^t_v}{\pa t}
\right|
\\
&\leq
\frac{
\big|D_v\big(\ze^t_v,(\ze'_u)_{u\in V_v^1}\big)-
D_v\big(\ze^t_v,(\ze_u)_{u\in V_v^1}\big)\big|}
{D_v\big(\ze^t_v,(\ze'_u)_{u\in V_v^1}\big)}
\Big|\frac{\pa\xi^t_v}{\pa t}\Big|
\sum_{u'\in V_v^1}
\frac{\eta_{u'}(\ze_{u'})}{D_v\big(\ze^t_v,(\ze_u)_{u\in V_v^1}\big)}
\\
&\quad +
\Big|\frac{\pa\xi^t_v}{\pa t}\Big|
\sum_{u'\in V_v^1}
\frac{\big|\eta_{u'}(\ze_{u'})-\eta_{u'}(\ze'_{u'})\big|}
{D_v\big(\ze^t_v,(\ze'_u)_{u\in V_v^1}\big)}
\\
&\leq
\frac{3}{D_v\big(\ze^t_v,(\ze'_u)_{u\in V_v^1}\big)}
\Big|\frac{\pa\xi^t_v}{\pa t}\Big|
\sum_{u'\in V_v^1}
\big|\ze_{u'}-\ze'_{u'}\big|,
\end{align*}
we obtain \eqref{6.4} by the use of \eqref{6.2}
when $s_v>0$.
The inequality \eqref{6.4} is trivial when $s_v=0$.
\end{proof}
We now set for $\vec{s},\vec{s'}\in \De_T$
and $v\in V\setminus L$
$$
\Xi_v(t):=
\sum_{u\in V_v^1}
\big|\xi_{u}^t(\vec{s}\,)-\xi_{u}^t(\vec{s'}\,)\big|.
$$
Under the assumption in Lemma \ref{lmm:6.2},
we obtain from \eqref{6.4} the following:
$$
\big|\Xi_v(t)-\Xi_v(a)\big|
\leq\frac{3}{\de}
\int_a^t\Xi_v(t')|\ga'(t')|dt'.
$$
Therefore,
Gronwall's Lemma yields
the following estimate:
\begin{equation}\label{6.5}
\Xi_v(t)\leq
c(t)\,
\Xi_v(a)
\quad
\text{holds for every}
\quad
t\in[a,1],
\end{equation}
where $c(t)$ is the constant given in \eqref{4.7}.
Since 
$
\Xi_v(a)=
\rho
\sum_{u\in V_v^1}|s_u-s'_u|,
$
we see by \eqref{6.5}
that the following holds a.e. on $\De_T$:
\begin{equation}\label{6.6}
\Big|
\Det\Big[ \frac{\pa\xi_{u'}^t}{\pa s_u}\Big]_{u,u'\in V_v^1}
\Big|
\leq
\prod_{u\in V_v^1}
\Big(
\sum_{u'\in V_v^1}
\Big|\frac{\pa\xi_{u'}^t}{\pa s_u}\Big|\,
\Big)
\leq
\big(c(t)\big)^{|V_v^1|}.
\end{equation}
Notice that $\pa\xi_v/\pa s_u=0$
for $u\notin W_v$,
and hence,
we find the following:
\begin{equation}\label{6.7}
\Big|
\Det\Big[ 
\frac{\pa\xi_{u'}^t}{\pa s_u}
\Big]_{u,u'\in V^\circ}
\Big|
=
\prod_{v\in V\setminus L}
\Big|
\Det\Big[ \frac{\pa\xi_{u'}^t}{\pa s_u}\Big]_{u,u'\in V_v^1}
\Big|.
\end{equation}
Since
$
\sum_{v\in V\setminus L}
|V_v^1|=k-1,
$
we obtain \eqref{4.6} from \eqref{6.6} and \eqref{6.7}.
Together with \eqref{5.8} and Lemma \ref{lmm:6.1},
Theorem \ref{thm:4.6} is validated.

\section{Applications to nonlinear 
differential and difference equations}\label{sec:7}

In this section,
we discuss the resurgence of formal series solutions
$$
\Phi
=
\left(
\begin{array}{c}
\Phi^{(1)}
\\
\vdots
\\
\Phi^{(n)}
\end{array}
\right)
\in \C^n[[x^{-1}]]
$$
of a nonlinear differential equation
\begin{equation}\label{7.1}
\frac{d}{dx}\Phi
=F(x^{-1},\Phi)
\end{equation}
at $x=\infty$
with
$F(x^{-1},\Phi)\in\C^n\{ x^{-1},\Phi\}$
satisfying the conditions
\begin{equation}\label{7.2}
F(0,0)=0
\quad
\text{and}
\quad
{\rm det} \big(\partial_{\Phi}F(0,0)\big)
\neq0.
\end{equation}
Under the assumption \eqref{7.2},
there exists a unique formal series solution
of the form
$$
\Phi(x)
=\sum_{k=1}^{\infty}\Phi_kx^{-k}.
$$
We rewrite $F(x^{-1},\Phi)$ in the following form:
$$
F(x^{-1},\Phi)=
F_0(x^{-1})+
\partial_{\Phi}F(0,0)\Phi
+
\sum_{|\ell|\geq1}
F_\ell(x^{-1})\Phi^{\ell},
$$
where 
$
\Phi^{\ell}:=\big(\Phi^{(1)}\big)^{\ell_1}
\cdots\big(\Phi^{(n)}\big)^{\ell_n}
$
with
$\ell=(\ell_1,\cdots,\ell_n)\in\Z_{\geq0}^n$ and
$|\ell|:=\ell_1+\cdots+\ell_n$.
Regarding \eqref{7.1} as an equation for 
$\tilde{\Phi}(x):=x^{-1}(\Phi(x)-\Phi_1x^{-1})$,
we may assume without loss of generality
that
\begin{equation}
F_{\ell}(0)=0
\quad
\text{for every}
\quad
\ell\in\Z_{\geq0}^n.
\end{equation}
Applying the Borel transform,
\eqref{7.1} is rewritten as follows:
\begin{equation}\label{7.4}
P(\xi)\hat\Phi
=
\hat{F}_0+
\sum_{|\ell|\geq1}
\hat{F}_\ell*\hat\Phi^{*\ell},
\end{equation}
where
$
P(\xi):=-\xi-\partial_{\Phi}F(0,0)
$
and
$
\hat\Phi^{*\ell}
:=\cB\big(\Phi^\ell\big).
$
We now inductively determine 
$\hat{\Phi}_k$ $(k\geq1)$
by the following procedure:
\begin{equation}\label{7.5}
\hat{\Phi}_1:=
P^{-1}\hat{F}_0,
\end{equation}
\begin{equation}\label{7.6}
\hat{\Phi}_{k+1}:=
P^{-1}\sum_{j=1}^k
\sum_{|\ell|=j}
\hat{F}_{\ell}*
\sum_{\substack{
k_1+\cdots+k_j=k
\\
k_i\geq1
}}
\hat{\Phi}_{k_1,\cdots,k_j}^{\ell},
\end{equation}
where
$
\hat{\Phi}_{k_1,\cdots,k_j}^{\ell}
$
is the convolution product of
functions in
$$
\Big\{
\hat{\Phi}_{k_i}^{(m)}
\ \Big|\ 
1+\sum_{p=1}^{m-1}\ell_p
\leq
i
\leq
\sum_{p=1}^{m}\ell_p,
1\leq m\leq n
\Big\}.
$$
We now introduce the following
\begin{dfn}
Let $T=(V,E)\in \sT$ and consider a function
$\nu=(\nu_1,\nu_2):V\to\{1,\cdots,n\}^2$.
We call such a pair $\ov{T}=(T,\nu)$
\emph{$n$-decorated iteration diagram}
and denote the set of $n$-decorated iteration diagrams
by $\sT^n$.
\end{dfn}
Let $\ov{T}=(T,\nu)\in\sT^n$.
We define an equivalence relation $\sim_v$ on $V_v^1$
for $v\in V\setminus L$ as follows:
$u\sim_v u'$ $(u,u'\in V_v^1)$ if $T_u=T_{u'}$ and
$\nu|_{V_u}=\nu|_{V_{u'}}$.
For each $[u]\in V_v^1/\sim_v$,
we define an integer $\# [u]$
as the cardinal of
$
\{u'\in V_v^1\mid u'\sim_v u\}
$
and the multiplicity $\mu_v$
of $\ov{T}$ at $v$ by
$$
\mu_v:=
\Big(\prod_{[u]\in V_v^1/\sim_v}\big(\# [u]\big)!\Big)^{-1}
\cdot
\big(\big|V_v^1\big|\big)!.
$$
We set 
$\mu_{\ov{T}}:=\prod_{v\in V\setminus L}\mu_v$.
We further define a map
$\la_{\ov{T},j}:V\to\Z_{\geq0}$
$(j=1,\cdots,n)$ by 
$
\la_{\ov{T},j}(v):=
|\{u\in V_v^1\mid
\nu_1(u)=j\}|
$
when $v\in V\setminus L$
and 
$
\la_{\ov{T},j}(v):=0
$
when $v\in L$.
We set
$\la_{\ov{T}}=(\la_{\ov{T},1},\cdots,\la_{\ov{T},n})$.
By the use of the
multinomial theorem,
we obtain the following
\begin{lmm}\label{lmm:7.2}
For every $k\geq1$ and $j\in\{1,\cdots,n\}$,
$\hat\Phi_k^{(j)}$ is written as follows:
\begin{equation}\label{7.7}
\hat\Phi_k^{(j)}
=\sum_{\ov{T}\in\sT_k^{n,(j)}}
\mu_{\ov{T}}\cdot
\psi_{\ov{T}},
\end{equation}
where
$
\sT_k^{n,(j)}
:=
\{(T,\nu)\in\ov\sT\mid
T\in\sT_k,\nu_1(\hat{v})=j
\}
$
and
$\psi_{\ov{T}}$
is the iterated convolution product of
$
\big(\,T\,;\{\hat{f}_v\}_{v\in V},\{\hat{\varphi}_v\}_{v\in V}\big)
$
with
$
\hat{f}_v:=
\hat{F}_{\la_{\ov{T}}(v)}^{(\nu_2(v))}
$
and
$
\hat{\ph}_v:=(P^{-1})_{\nu(v)}
$
$\big((\nu_1(v),\nu_2(v))$-th entry
of $P^{-1}\big)$.
%
%
%
%
\end{lmm}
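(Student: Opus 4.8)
The plan is to prove \eqref{7.7} by induction on $k$, running in parallel with the inductive construction \eqref{7.5}--\eqref{7.6} of the $\hat\Phi_k$. For the base case $k=1$, the right-hand side of \eqref{7.7} should reduce to a sum over $n$-decorated diagrams in $\sT_1$: since $\sT_1$ has a single diagram consisting of one vertex $\hat v$ (which is also a leaf), an element of $\sT_1^{n,(j)}$ is determined by $\nu(\hat v)=(j,m)$ with $m\in\{1,\dots,n\}$, the multiplicity $\mu_{\ov T}=1$, and $\la_{\ov T}(\hat v)=0$. The iterated convolution of such data is just $\hat\ph_{\hat v}\cdot\hat f_{\hat v}=(P^{-1})_{(j,m)}\hat F_0^{(m)}$. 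Summing over $m$ recovers the $j$-th entry of $P^{-1}\hat F_0$, which is $\hat\Phi_1^{(j)}$ by \eqref{7.5}.

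For the inductive step, I would assume \eqref{7.7} holds for all indices up to $k$ and substitute these expressions into the recursion \eqref{7.6} for $\hat\Phi_{k+1}^{(j)}$. Concretely, $\hat\Phi_{k+1}^{(j)} = (P^{-1})_{(j,m)}$ applied (summed over $m$) to $\sum_{|\ell|\ge1}\hat F_\ell^{(m)} * (\text{convolution of the }\hat\Phi_{k_i}^{(m_p)})$. Into each factor $\hat\Phi_{k_i}^{(m_p)}$ I plug the inductive expansion \eqref{7.7}, which expresses it as a sum over decorated diagrams of weight $k_i$ rooted at a vertex decorated by $(m_p,\cdot)$. The key combinatorial observation is that assembling a choice of $\ell$, a choice of ordered factors, and a decorated diagram for each factor is exactly the same as choosing a single decorated diagram $\ov T\in\sT_{k+1}^{n,(j)}$ together with an ordering of the incoming edges at the root $\hat v$: the incoming edges correspond to the $|\ell|$ convolution factors, the branch $T_u$ at $u\in V_{\hat v}^1$ is the diagram of the corresponding factor, $\la_{\ov T}(\hat v)=\ell$ records how many factors carry first-coordinate decoration $j'$ for each $j'$, and $\nu_2(\hat v)=m$. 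Under this correspondence the product of $\hat f_v,\hat\ph_v$ data matches \eqref{3.1} at the root (with $\hat f_{\hat v}=\hat F_\ell^{(m)}$, $\hat\ph_{\hat v}=(P^{-1})_{(j,m)}$), so the iterated convolution built up equals $\psi_{\ov T}$.

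The one genuinely delicate point is the bookkeeping of multiplicities, and this is where the multinomial theorem is invoked. The recursion \eqref{7.6} involves $\hat F_\ell * \hat\Phi^{*\ell}$, and $\hat\Phi^\ell=(\Phi^{(1)})^{\ell_1}\cdots(\Phi^{(n)})^{\ell_n}$ expands via the multinomial coefficient when each power $(\Phi^{(m)})^{\ell_m}$ is itself written through \eqref{7.7} as a sum; the sum over ordered tuples $(k_1,\dots,k_j)$ and over which component each factor comes from overcounts each \emph{unordered} decorated diagram exactly by the factor $(|V_{\hat v}^1|)!\big/\prod_{[u]}(\#[u])!$, i.e. by $\mu_{\hat v}^{-1}$. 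Running this argument at every vertex (equivalently, tracking it through the induction, since the branch $T_u$ contributes its own factor $\mu_{T_u}$ already) shows the total combinatorial weight attached to $\psi_{\ov T}$ is precisely $\prod_{v\in V\setminus L}\mu_v = \mu_{\ov T}$, as claimed. I expect the main obstacle to be phrasing this multiplicity cancellation cleanly — one must be careful that the equivalence $\sim_v$ identifies incoming subtrees that are isomorphic \emph{as decorated diagrams}, matching exactly the indistinguishability of factors in the multinomial expansion — but once the correspondence between ordered assembly data and (decorated diagram, edge-ordering) pairs is set up, the identity \eqref{7.7} follows by comparing coefficients of each $\psi_{\ov T}$ on both sides.
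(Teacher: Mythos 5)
Your overall strategy --- induction on $k$, substituting \eqref{7.7} into \eqref{7.6}, matching the resulting ordered assembly data with decorated diagrams, and tracking multiplicities --- is the natural reading of the paper's ``by the multinomial theorem,'' and your base case $k=1$ is fine. But the decisive step, the multiplicity count at the root, is asserted rather than derived, and as written it does not match what \eqref{7.6} actually produces. (A small slip first: $\big(|V_{\hat v}^1|\big)!\,/\prod_{[u]}(\# [u])!$ \emph{is} $\mu_{\hat v}$, not $\mu_{\hat v}^{-1}$.) The crucial point is that \eqref{7.6} does \emph{not} range over arbitrary orderings of the incoming branches at the root: for a fixed $\ell$, the component attached to position $i$ is forced to be the $m$ with $\sum_{p<m}\ell_p<i\le\sum_{p\le m}\ell_p$, so positions $1,\dots,\ell_1$ carry component $1$, the next $\ell_2$ carry component $2$, etc. Hence the number of ordered tuples of branch diagrams that assemble into a given $\ov T$ at the root equals
\[
\frac{\prod_{m=1}^n \la_{\ov T,m}(\hat v)!}{\prod_{[u]\in V_{\hat v}^1/\sim_{\hat v}}(\# [u])!},
\]
which differs from $\mu_{\hat v}$ by the multinomial factor $\big(|V_{\hat v}^1|\big)!\,/\prod_{m}\la_{\ov T,m}(\hat v)!$; your proposal glosses over this because ``which component each factor comes from'' is not freely summed in \eqref{7.6}.

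A concrete case makes the gap visible. Take $n=2$ and $\ov T\in\sT_3^{n,(j)}$ whose root $\hat v$ has $\nu(\hat v)=(j,m)$ and two leaf children $u_1,u_2$ with $\nu(u_1)=(1,a)$, $\nu(u_2)=(2,b)$; then $\# [u_1]=\# [u_2]=1$ and $\mu_{\ov T}=\mu_{\hat v}=2$, yet the only term of \eqref{7.5}--\eqref{7.6} producing $\psi_{\ov T}=(P^{-1})_{(j,m)}\big(\hat F_{(1,1)}^{(m)}*(P^{-1})_{(1,a)}\hat F_0^{(a)}*(P^{-1})_{(2,b)}\hat F_0^{(b)}\big)$ comes with coefficient $1$. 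So either $\mu_v$ should be read with $\prod_m\la_{\ov T,m}(v)!$ in place of $\big(|V_v^1|\big)!$ (which does not affect Lemma \ref{lmm:7.3} or Theorem \ref{thm:7.4}, since those only use an upper bound on $\sum\mu_{\ov T}$), or there is an implicit symmetrization in \eqref{7.6} that you would need to make explicit. Either way, the multiplicity identification is precisely the content of the lemma and cannot be left to an appeal to ``cancellation''; you must actually carry out the count at the root, and for $n\ge2$ it does not come out to $\mu_{\hat v}$ as defined.
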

We now show the following
\begin{lmm}\label{lmm:7.3}
There exist positive constants $C$
and $\de(<1/n)$
such that
\begin{equation}\label{7.8}
\sum_{\ov{T}\in\sT_k^{n,(j)}}
\mu_{\ov{T}}
\leq
\de B(k)C^{k}
\end{equation}
holds for ever $k\geq1$ and $j\in\{1,\cdots,n\}$,
where $B(k)$ is a constant defined by
$$
B(k):=
\frac{3}{2\pi^2(k+1)^2}.
$$
\end{lmm}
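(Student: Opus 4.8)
The quantity on the left of \eqref{7.8} is a sum over $n$-decorated iteration diagrams of combinatorial weights $\mu_{\ov T}$, so the natural approach is to translate it into a generating-function identity and then extract coefficients. The plan is to introduce, for each $j\in\{1,\dots,n\}$, a formal power series $g_j(z):=\sum_{k\ge1}\big(\sum_{\ov T\in\sT_k^{n,(j)}}\mu_{\ov T}\big)z^k$ and to show that the vector $(g_1,\dots,g_n)$ satisfies a system of algebraic/functional equations reflecting the recursive structure of iteration diagrams. Concretely, a decorated diagram with root $\hat v$ of label $(\nu_1,\nu_2)(\hat v)=(j,m)$ is obtained by choosing the multiset $V_{\hat v}^1$ of sub-diagrams hanging off the root, together with a first-label function on them; the multiplicity $\mu_{\hat v}$ is exactly the multinomial coefficient that makes the sum over \emph{ordered} tuples of children collapse to a sum over multisets. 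Hence, summing $\mu_{\ov T}$ over all diagrams of a given size reproduces the coefficients of a composition of power series: roughly $g_j=z\sum_{m=1}^n\big(1+\sum_{i\ge1}\text{(symmetric functions of the }g_\ell)\big)$, i.e. each $g_j$ equals $z$ times a fixed convergent power series evaluated at $(g_1,\dots,g_n)$, with all Taylor coefficients of that power series nonnegative and bounded (they come from $F$ and $P^{-1}$, which are genuinely convergent, but here we only need a crude majorant with coefficients $\le 1$ after rescaling $z$).

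**Key steps.** First I would make the generating-function dictionary precise: verify that $\mu_v^{-1}=\prod_{[u]}(\#[u])!/(|V_v^1|)!$ is the reciprocal multinomial coefficient, so that $\sum_{(\nu,\text{ordered children})}1=\mu_v\cdot(\text{number of multiset configurations})$, exactly as in the multinomial theorem already invoked in Lemma \ref{lmm:7.2}. Second, from this I obtain a scalar majorant: all $g_j$ are dominated by the unique power-series solution $g$ of a single equation $g=z\,\Theta(g)$ where $\Theta$ is a convergent series with $\Theta(0)\ne0$ and nonnegative coefficients. Third, I apply the classical analytic-implicit-function / Lagrange-inversion estimate: such $g$ has radius of convergence $R>0$ and its coefficients $[z^k]g$ grow at most like $C_0^{k}$ for some $C_0>0$; more precisely, because $g$ has a \emph{square-root-type} branch point at its singularity (the generic behaviour for $g=z\Theta(g)$ with $\Theta$ analytic), one gets $[z^k]g\le \text{const}\cdot k^{-3/2}C_0^{k}$. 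The factor $k^{-3/2}$ is comfortably absorbed by $B(k)\asymp k^{-2}$ (indeed $k^{-3/2}\le$ const$\cdot k^{-2}\cdot k^{1/2}$ is false, so here I would instead note that a cruder bound $[z^k]g \le \text{const}\cdot C_0^k$ already suffices once we are willing to enlarge $C$, and $B(k)^{-1}\asymp k^2$ is bounded below by a constic times $1$, so $\delta B(k) C^k$ with $\delta$ small and $C$ large dominates any fixed geometric sequence for all $k\ge1$ after checking the finitely many small $k$ by hand). Finally I would choose $C$ large enough that $C^k\ge (\text{const})\,C_0^k/B(k)$ for all $k\ge1$ — possible since $B(k)^{-1}=\tfrac{2\pi^2}{3}(k+1)^2$ grows only polynomially — and then shrink $\delta$ below $1/n$, which costs nothing since $C$ can always be further enlarged.

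**Main obstacle.** The delicate point is establishing the exact generating-function identity, i.e. checking that the weights $\mu_{\ov T}$ conspire so that $\sum_k(\sum_{\ov T}\mu_{\ov T})z^k$ is \emph{literally} the composition of convergent series and not merely bounded by one; this requires carefully matching the recursive construction of $T$ (children of the root are themselves iteration diagrams, decorated independently) against the exponential-generating-function versus ordinary-generating-function bookkeeping, and verifying that the equivalence relation $\sim_v$ and the factor $\mu_v$ are precisely what convert ordered children into multisets. Once that bijective/enumerative identity is in place, the analytic estimate on coefficients of an algebraic power series with nonnegative coefficients is standard (singularity analysis, or a direct contour bound $[z^k]g\le R^{-k}\sup_{|z|=R'}|g(z)|$ for $R'<R$), and the polynomial weight $B(k)$ is then trivially accommodated by enlarging $C$ and shrinking $\delta$.
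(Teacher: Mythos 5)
Your proposal is correct but takes a genuinely different route from the paper. The paper does not introduce generating functions: it first records (using Lemma~\ref{lmm:7.2}, whose combinatorial content is the same multinomial bookkeeping you are worried about) that $N_k:=\sum_{\ov T\in\sT_k^{n,(j)}}\mu_{\ov T}$ satisfies the recursion
\[
N_{k+1}=\sum_{j=1}^k\sum_{|\ell|=j}n\sum_{\substack{k_1+\cdots+k_j=k\\k_i\geq1}}N_{k_1}\cdots N_{k_j},
\]
and then proves \eqref{7.8} directly by induction in $k$ using the Nagumo-type convolution inequality $\sum_{k_1+\cdots+k_j=k}B(k_1)\cdots B(k_j)\leq B(k)$: plug the induction hypothesis $N_{k_i}\leq\de B(k_i)C^{k_i}$ into the recursion, apply the convolution inequality, sum the resulting geometric series $\sum_j(n\de)^j$, and enlarge $C$. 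This gives the sharp polynomial factor $B(k)$ in a single stroke, with no mention of dominant singularities or Lagrange inversion. Your route instead passes to a majorant generating function $g=z\,\Theta(g)$ (in fact $g(1-g)^n=nz$ exactly), extracts a geometric bound $N_k\leq D\,C_0^k$ from analyticity, and then absorbs the polynomial weight $B(k)^{-1}\asymp k^2$ by a further enlargement of $C$ and a final choice of $\de<1/n$; this is also valid, and arguably more transparent conceptually, at the cost of heavier machinery. Your flagged ``main obstacle'' — rigorously matching the weights $\mu_{\ov T}$ to a composition of power series — is exactly what the paper's recursion above encodes coefficient-wise, so both approaches rest on the same combinatorial identity and you are on equal footing there. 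One wording slip: the parenthetical claiming that ``$k^{-3/2}\le\text{const}\cdot k^{-2}\cdot k^{1/2}$ is false'' is garbled (the inequality is a tautology with constant $1$), but this has no bearing on your argument since you correctly pivot to the cruder geometric bound, which indeed suffices — and even the precise $k^{-3/2}R^{-k}$ from singularity analysis would be absorbed by $B(k)C^k$ once $C>R^{-1}$.
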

\begin{proof}
Since the left hand side of \eqref{7.8}
is independent of $j$,
we denote it by $N_k$.
We find by \eqref{7.6} that
$\{N_k\}_{k\geq1}$ satisfy the following:
\begin{equation}\label{7.9}
N_{k+1}=
\sum_{j=1}^k
\sum_{|\ell|=j}
n
\sum_{\substack{
k_1+\cdots+k_j=k
\\
k_i\geq1
}}
N_{k_1}\cdots N_{k_j}.
\end{equation}
Taking $C>0$ sufficiently large,
we may assume that $N_1$ satisfies
\eqref{7.8} for arbitrary small positive $\de$.
We then assume that \eqref{7.8} holds
for $k\leq K$.
Using the inequality
$$
\sum_{
k_1+\cdots+k_j=k
}
B(k_1)\cdots B(k_j)
\leq
B(k),
$$
we derive from \eqref{7.9} the following:
$$
N_{K+1}
\leq
nB(k)C^k
\sum_{j=1}^k
\sum_{|\ell|=j}
\de^j
\leq
nB(k)C^k
\sum_{j=1}^k
(n\de)^j
\leq
nB(k)C^k
\frac{n\de}{1-n\de}.
$$
Therefore,
taking $C$ sufficiently large
so that it satisfies
$n^2(1-n\de)^{-1}\leq C$,
we see that \eqref{7.8} holds
for $k=K+1$.
We thus obtain \eqref{7.8} for $k\geq1$.
\end{proof}
Let $\Om= (\Om_L)_{L\in\Rp}$ be a \dfs\ 
defined by the formula
\begin{equation}\label{7.10}
\Omega_L
=\{ \xi\in\C\ |\ 
{\rm det} \big(-\xi-\partial_{\Phi}F(0,0)\big)=0,
|\xi|\leq L
\}.
\end{equation}
We then find that each entry of $P^{-1}$
is $\Om$-continuable,
and hence, we obtain from
Corollary \ref{crl:3.10}
the following estimates:
for any $\de,L>0$, 
there exist $c,\de'>0$ such that
\begin{equation}\label{7.11}
\big\|
\hat\psi_{\ov{T}}
\big\|_{\Om_{\hat{v}}}^{\de,L}
\leq
\frac{c ^{k-1}}{(k-1)!}
\prod_{v\in V}
\big\|(P^{-1})_{\nu(v)}
\big\|_{\Om}^{\de',L}
\big\|
\hat{F}_{\la_{\ov{T}}(v)}^{(\nu_2(v))}
\big\|_{\O}^{\de',L}
\end{equation}
holds for every $\ov{T}\in\sT_k^{n,(j)}$.
Since $F(x^{-1},\Phi)\in\C^n\{ x^{-1},\Phi\}$,
we can take $A>0$ so that
$
\big\|(P^{-1})_{ij}
\big\|_{\Om}^{\de',L}
\leq A
$
and
$
\big\|\hat{F}_{\ell}^{(j)}
\big\|_{\O}^{\de',L}
\leq A^{1+|\ell|}
$
hold for any $i,j\in\{1,\cdots,n\}$
and $\ell\in\Z_{\geq0}^n$.
Notice that
$
\big|\la_{\ov{T}}(v)\big|
=\big|V_v^1\big|
$
when $v\in V\setminus L$,
and hence,
$
\sum_{v\in V}
\big|\la_{\ov{T}}(v)\big|
=k-1.
$
Therefore,
we derive from
\eqref{7.7}, \eqref{7.8}
and \eqref{7.11}
the following estimates:
there exists a positive constant $C$ such that
\begin{equation}\label{7.12}
\big\|
\hat\Phi_k^{(j)}
\big\|_{\Om^{*\infty}}^{\de,L}
\leq
\frac{C^{k}}{(k-1)!}
\end{equation}
holds for
every $k\geq1$ and $j\in\{1,\cdots,n\}$.
We then find that each entry of
$
\hat{\Phi}=
\sum_{k\geq1}\hat{\Phi}_k
$
converges in
$\hat\sR_{\Om^{*\infty}}$ 
and $\hat{\Phi}$
gives a solution of \eqref{7.4}.
Thus, we obtain the following
\begin{thm}\label{thm:7.4}
Let $\Omega=\{\Omega_L\}_{L\in\Rp}$ 
be a \dfs\ defined by \eqref{7.10}.
Then,
each entry of
the formal series solution
$
\Phi
\in \C^n[[x^{-1}]]
$
of \eqref{7.1} is $\Omega^{*\infty}$-resurgent.
\end{thm}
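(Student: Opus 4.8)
The plan is to follow the scheme already laid out in the excerpt, assembling the pieces from Sections 3--6 and the combinatorial estimates of this section. The formal series solution $\Phi$ is uniquely determined by \eqref{7.1}--\eqref{7.2}, and after the harmless normalization $F_\ell(0)=0$ the Borel transform yields the fixed-point relation \eqref{7.4}, with the recursive formulas \eqref{7.5}--\eqref{7.6} for the coefficients $\hat\Phi_k$. The first substantive step is the mould-type expansion of Lemma \ref{lmm:7.2}: unfolding the recursion \eqref{7.6} and sorting the iterated convolution/Cauchy products by the underlying tree structure, one writes $\hat\Phi_k^{(j)}$ as a sum over $n$-decorated iteration diagrams $\ov T\in\sT_k^{n,(j)}$ of $\mu_{\ov T}\,\psi_{\ov T}$, where $\psi_{\ov T}$ is the iterated convolution of $\big(T;\{\hat f_v\},\{\hat\varphi_v\}\big)$ with $\hat f_v=\hat F_{\la_{\ov T}(v)}^{(\nu_2(v))}$ and $\hat\varphi_v=(P^{-1})_{\nu(v)}$. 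The multiplicities $\mu_v$ arise from the multinomial theorem applied to $\hat\Phi^{*\ell}$: the unordered choices of which child computes which factor of the monomial $\Phi^\ell$ collapse equivalent subtrees.

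Next I would record that each entry of $P^{-1}(\xi)=(-\xi-\partial_\Phi F(0,0))^{-1}$ is $\Om$-continuable for $\Om$ defined by \eqref{7.10}, since its singularities lie exactly at the zeros of $\det(-\xi-\partial_\Phi F(0,0))$, i.e.\ at the eigenvalues of $\partial_\Phi F(0,0)$; and each $\hat F_\ell^{(j)}$ is entire, belonging to $\hat\sR_{\O}$. This places us precisely in the setting of Theorem \ref{thm:3.9} and Corollary \ref{crl:3.10}, which then give, for fixed $\de,L>0$, constants $c,\de'>0$ with the estimate \eqref{7.11} for $\|\hat\psi_{\ov T}\|_{\Om_{\hat v}}^{\de,L}$, uniformly over $\ov T\in\sT_k^{n,(j)}$. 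Using convergence of $F(x^{-1},\Phi)$ at the origin to bound $\|(P^{-1})_{ij}\|_{\Om}^{\de',L}\le A$ and $\|\hat F_\ell^{(j)}\|_{\O}^{\de',L}\le A^{1+|\ell|}$, together with the observation $\sum_{v\in V}|\la_{\ov T}(v)|=k-1$ (each internal vertex contributes $|V_v^1|$ and $\sum_{v\in V\setminus L}|V_v^1|=k-1$), every diagram of size $k$ satisfies $\|\hat\psi_{\ov T}\|\le (cA')^{k-1}/(k-1)!$ for a suitable $A'$. Combining this with the counting estimate \eqref{7.8} of Lemma \ref{lmm:7.3} — whose proof is an induction on $k$ using $\sum_{k_1+\cdots+k_j=k}B(k_1)\cdots B(k_j)\le B(k)$ and geometric summation in $j$, valid once $n\de<1$ and $C$ is large — yields \eqref{7.12}: $\|\hat\Phi_k^{(j)}\|_{\Om^{*\infty}}^{\de,L}\le C^k/(k-1)!$.

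Finally, since $\de,L>0$ were arbitrary and the seminorms $\|\cdot\|_{\Om^{*\infty}}^{\de,L}$ define the Fréchet topology on $\hat\sR_{\Om^{*\infty}}$, the bound \eqref{7.12} shows $\sum_{k\ge1}\hat\Phi_k$ converges absolutely in $\hat\sR_{\Om^{*\infty}}$, so each entry of $\hat\Phi$ lies in $\hat\sR_{\Om^{*\infty}}$; continuity of convolution and the uniform convergence let one pass to the limit termwise in \eqref{7.4}, so $\hat\Phi$ solves it and hence $\Phi$ is $\Om^{*\infty}$-resurgent. I expect the main obstacle to be the bookkeeping in Lemma \ref{lmm:7.2}: one must verify that the weight $w_v$ attached to each vertex (cardinality of leaves above $v$) matches the convolution depth so that the relevant \dfs\ at $v$ is $\Om_v=\Om^{*w_v}$, that the multiplicities $\mu_{\ov T}$ correctly account for the multinomial coefficients in $\hat\Phi^{*\ell}$ without over- or under-counting, and that the passage from $\Om_{\hat v}$-seminorms to $\Om^{*\infty}$-seminorms (via $\Om\subset\Om_v\subset\Om^{*\infty}$ and the monotonicity $\|\cdot\|_{\Om'}^{\de,L}\le\|\cdot\|_{\Om}^{\de,L}$ from Section \ref{sec:2}) is applied in the right direction; the analytic heart of the argument is entirely outsourced to Theorem \ref{thm:3.9}.
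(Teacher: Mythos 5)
Your proposal reproduces the paper's argument step for step: the tree expansion of Lemma \ref{lmm:7.2}, the combinatorial bound of Lemma \ref{lmm:7.3}, the application of Corollary \ref{crl:3.10} with $(P^{-1})_{\nu(v)}\in\hat\sR_\Om$ and $\hat F_\ell^{(j)}\in\hat\sR_\O$, the geometric bounds $A$ and $A^{1+|\ell|}$, the identity $\sum_v|\la_{\ov T}(v)|=k-1$, the resulting estimate \eqref{7.12}, and the passage to $\Om^{*\infty}$-seminorms via the monotonicity from Section \ref{sec:2}. This is the same route the paper takes, with the analytic content correctly outsourced to Theorem \ref{thm:3.9}, and the points you flag as requiring care are exactly the ones the paper quietly relies on.
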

By the same discussion,
we have the following
\begin{thm}\label{thm:7.5}
Let us consider a nonlinear difference equation
\begin{equation}\label{7.13}
\Phi(x+1)-\Phi(x)
=F(x^{-1},\Phi(x))
\end{equation}
at $x=\infty$ under the assumption \eqref{7.2}.
Then,
there exists a unique formal series solution
$
\Phi(x)
\in \C^n[[x^{-1}]]
$
of \eqref{7.13}
and
each entry of $\Phi(x)$ is 
$\Omega^{*\infty}$-resurgent,
where $\Omega=\{\Omega_L\}_{L\in\Rp}$ 
is a \dfs\ defined by 
\begin{equation}\label{7.14}
\Omega_L
=\{ \xi\in\C\ |\ 
{\rm det} \big((e^{-\xi}-1)-\partial_{\Phi}F(0,0)\big)=0,
|\xi|\leq L
\}.
\end{equation}

\end{thm}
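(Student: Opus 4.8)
The plan is to mirror the argument already carried out for the differential equation \eqref{7.1} in Theorem~\ref{thm:7.4}, replacing the symbol $P(\xi)=-\xi-\pa_\Phi F(0,0)$ by the symbol $P(\xi):=(e^{-\xi}-1)-\pa_\Phi F(0,0)$ coming from the Borel transform of the difference operator $\Phi(x+1)-\Phi(x)$. First I would recall that, under \eqref{7.2}, equation \eqref{7.13} has a unique formal series solution $\Phi(x)=\sum_{k\ge1}\Phi_kx^{-k}\in\C^n[[x^{-1}]]$: comparing coefficients of $x^{-k}$ on both sides, the leading term is fixed by $-\pa_\Phi F(0,0)\Phi_1=F_0$-data and each subsequent $\Phi_{k+1}$ is determined recursively, exactly as in the differential case, since the coefficient of $\Phi(x+1)-\Phi(x)$ contributing at order $x^{-k-1}$ is again invertible. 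As before, after the substitution $\wt\Phi(x):=x^{-1}(\Phi(x)-\Phi_1x^{-1})$ we may assume $F_\ell(0)=0$ for all $\ell\in\Z_{\ge0}^n$.

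Next I would apply the formal Borel transform to \eqref{7.13}. The key point is that $\cB$ sends $x^{-1}\mapsto 1$ (in the sense of the shift acting on $\hat\Phi$) and, crucially, sends the finite-difference operator $\Phi(x)\mapsto\Phi(x+1)-\Phi(x)$ to multiplication by $e^{-\xi}-1$ on the Borel side, because $\cB(\Phi(x+1))(\xi)=e^{-\xi}\hat\Phi(\xi)$. Hence \eqref{7.13} becomes
\begin{equation*}
P(\xi)\hat\Phi=\hat F_0+\sum_{|\ell|\ge1}\hat F_\ell*\hat\Phi^{*\ell},
\qquad P(\xi)=(e^{-\xi}-1)-\pa_\Phi F(0,0),
\end{equation*}
which is formally identical to \eqref{7.4}. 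One then defines $\hat\Phi_k$ by the same recursion \eqref{7.5}--\eqref{7.6} with this new $P$, and Lemma~\ref{lmm:7.2} goes through verbatim: $\hat\Phi_k^{(j)}=\sum_{\ov T\in\sT_k^{n,(j)}}\mu_{\ov T}\,\psi_{\ov T}$ with $\hat f_v=\hat F_{\la_{\ov T}(v)}^{(\nu_2(v))}$ and $\hat\ph_v=(P^{-1})_{\nu(v)}$. Lemma~\ref{lmm:7.3}, being purely combinatorial in $\sT_k^{n,(j)}$, is untouched.

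The one genuinely new point—and the main obstacle—is to verify that each entry of $P^{-1}$ is $\Om$-continuable for the \dfs\ $\Om$ defined by \eqref{7.14}, i.e.\ that $(P^{-1})_{ij}(\xi)$ extends analytically along every $\Om$-allowed path. Here $\det P(\xi)=\det\big((e^{-\xi}-1)-\pa_\Phi F(0,0)\big)$ is an \emph{entire} function of $\xi$ (since $e^{-\xi}-1$ is entire), so its zero set is a closed discrete subset of $\C$; intersecting with $\{|\xi|\le L\}$ gives the finite sets $\Om_L$, and conditions a)--c) of Definition~\ref{dfn:2.1} hold because, $P(0)=-\pa_\Phi F(0,0)$ being invertible by \eqref{7.2}, the origin is not a zero, so $\Om_\de=\O$ for small $\de>0$. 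Away from these zeros $P(\xi)^{-1}$ is holomorphic and single-valued, hence trivially continuable along any path avoiding $\ov\cS_\Om$; thus each $(P^{-1})_{ij}\in\hat\sR_\Om$. Once this is in hand, I would invoke Corollary~\ref{crl:3.10} exactly as in \eqref{7.11}, choose $A>0$ bounding $\|(P^{-1})_{ij}\|_\Om^{\de',L}$ and $\|\hat F_\ell^{(j)}\|_{\O}^{\de',L}\le A^{1+|\ell|}$ (possible since $F\in\C^n\{x^{-1},\Phi\}$), use $\sum_{v\in V}|\la_{\ov T}(v)|=k-1$, and combine with the counting bound \eqref{7.8} to get $\|\hat\Phi_k^{(j)}\|_{\Om^{*\infty}}^{\de,L}\le C^k/(k-1)!$. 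This forces $\hat\Phi=\sum_{k\ge1}\hat\Phi_k$ to converge in $\hat\sR_{\Om^{*\infty}}$ and to solve the Borel-transformed equation, so each entry of $\Phi$ is $\Om^{*\infty}$-resurgent, proving Theorem~\ref{thm:7.5}. The only subtlety worth a remark is that the shift-by-$e^{-\xi}$ identity must be checked at the level of the recursion so that the iterated-convolution bookkeeping of Section~\ref{sec:3} applies unchanged; but since $e^{-\xi}$ is entire this introduces no new singularities and the structure of $\sT^n$, $\mu_{\ov T}$, $\la_{\ov T}$ is literally the same as in the differential case.
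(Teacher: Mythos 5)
Your proposal is correct and follows the same route the paper intends (the paper's own ``proof'' of Theorem~\ref{thm:7.5} is literally the phrase ``By the same discussion''). You correctly identify the one nontrivial replacement — the Borel symbol of the difference operator, $\cB\big(\Phi(x+1)-\Phi(x)\big)=(e^{-\xi}-1)\hat\Phi$, hence $P(\xi)=(e^{-\xi}-1)-\partial_\Phi F(0,0)$ — verify that \eqref{7.14} defines a genuine \dfs\ (finiteness in discs because $\det P$ is a nonzero entire function, and $\Om_\de=\O$ near $0$ since $P(0)=-\partial_\Phi F(0,0)$ is invertible by \eqref{7.2}) and that $P^{-1}$ is $\Om$-continuable, after which Lemma~\ref{lmm:7.2}, Lemma~\ref{lmm:7.3}, Corollary~\ref{crl:3.10} and the bound \eqref{7.12} carry over unchanged.
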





\vspace{.4cm}

\noindent {\em Acknowledgements.}
{
The author expresses his gratitude to Prof. David Sauzin.
His comments and suggestions have been helpful.
The author also expresses his gratitude to
Prof. Masafumi Yoshino and Prof. Yoshitsugu Takei
for their encouragement.
The author is grateful to all the staffs in
Mathematical department of
Hiroshima University
for their kind hospitality.


\vspace{.4cm}

\end{document}